\def\opi{{\mathrm{i}\mkern1mu}}
\def\veps{\varepsilon}
\def\eps{\epsilon}
\newcommand{\bC}{\mathbb{C}}
\newcommand{\real}{\operatorname*{Re}}
\newcommand{\imag}{\operatorname*{Im}}
\newtheorem{theorem}{Theorem}
\newtheorem{prop}{Proposition}
\newtheorem{assumpt}{Assumption}
\newtheorem{rmk}{Remark}
\title{Numerical inverse Laplace transform for convection-diffusion equations}
\author{Nicola Guglielmi\footnotemark[1] \and Mar\'ia L\'opez-Fern\'andez\footnotemark[2] \and Giancarlo Nino\footnotemark[3]}
\begin{document}

\maketitle
\renewcommand{\thefootnote}{\fnsymbol{footnote}}
\footnotetext[1]{Gran Sasso Science Institute,
                 via Crispi 7,
                 L'Aquila, Italy. Email: {\tt nicola.guglielmi@gssi.it}}
\footnotetext[2]{Dipartimento di Matematica,
Universit\`a di Roma La Sapienza,
Piazzale Aldo Moro 5,
I-00185  Roma,  Italy. Email: {\tt lopez@mat.uniroma1.it}}
\footnotetext[3]{Dipartimento di Ingegneria Scienze Informatiche e Matematica,
Universit\`a degli Studi di L'Aquila,
Via Vetoio - Loc.~Coppito,
I-67010 L' Aquila and Gran Sasso Science Institute, L'Aquila, Italy. Email: {\tt giancarlo.nino@gssi.it}}

\renewcommand{\thefootnote}{\arabic{footnote}}

\begin{abstract}
In this paper a novel contour integral method is proposed for linear convection-diffusion equations. The method is based on the inversion of the Laplace transform
and makes use of a contour given by an elliptic arc joined symmetrically to two
half-lines. The trapezoidal rule is the chosen integration method for the numerical
inversion of the Laplace transform, due to its well-known fast convergence properties
when applied to analytic functions. Error estimates are provided as well as careful
indications about the choice of several involved parameters. The method selects the
elliptic arc in the integration contour by an algorithmic strategy based on the computation of pseudospectral
level sets of the discretized differential operator. In this sense the method is general and can be applied to any linear convection-diffusion equation without knowing any \textit{a priori} information about its pseudospectral geometry.
Numerical experiments performed on the Black--Scholes ($1D$) and Heston ($2D$)
equations show that the method is competitive with other contour integral methods available in the
literature.
\end{abstract}

{\bf Keywords:} Contour integral methods, pseudospectra, Laplace transform, numerical inversion of Laplace transform,
trapezoidal rule, quadrature for analytic functions.

{\bf AMS subject classifications:} 65L05, 65R10, 65J10,65M20, 91-08

\pagestyle{myheadings}
\thispagestyle{plain}
\markboth{N.~Guglielmi, M.~L\'opez-Fern\'andez and G.~Nino}{Numerical inverse LT for convection-difussion equations}

\section{Introduction}
We consider the time discretization of Initial Value Problems for linear systems of ODEs:
\begin{equation}\label{mainpb}
\frac{\partial u}{\partial t}=Au + b(t), \qquad u(0)=u_0,
\end{equation}
for $t>0$, $A$ a discrete version of an elliptic operator and $b$ a source term including possibly boundary contributions. The solution $u$ will thus be a time-dependent vector, of dimension equal to the number of degrees of freedom in the spatial semi-discretization of the reference problem. We are particularly interested in equations arising in mathematical finance, such as Black--Scholes, Heston or Heston-Hull-White equations \cite{BS,He,Hu}.

Classical methods to approximate the solution $u(t)$ to \eqref{mainpb} include Runge-Kutta and multistep integrators. Also splitting schemes, like ADI methods have been proposed to solve the continuous reference problem, see for example \cite{ITHF,ITH,ITHW,ITHW2} for Heston equation. All these methods are of time-stepping type and thus, in order to approximate the solution at a certain time $t_n$, approximations at certain smaller times
$0<t_1<t_2<\ldots<t_n$ must be previously computed. For large times or high accuracy requirements this procedure can be extremely demanding in terms of time and computational cost. An alternative to time-stepping methods to compute the solution at (few) given times, large or not, can be derived based on the Laplace transform and its numerical inversion. This approach has been successfully developed in  \cite{GaMa,LP,LPS,SST} for linear evolutionary problems governed by a {\em sectorial} operator, this is, assuming that $A$ in \eqref{mainpb} has bounded resolvent outside a certain acute sector in the left-half of the complex plane. The magnitude of the resolvent norm $\left\|\left(zI-A\right)^{-1}\right\|$ deeply impacts on the rate of convergence of the method. For this reason, the integration contour must be chosen accordingly to the pseudospectral geometry of $A$. In particular, if $A$ is non normal the pseudospectral geometry of $A$ can be difficult to estimate \cite{TreE}. The spatial discretization of a convection-diffusion operator typically leads to a non normal matrix \cite{ITHWeid,W}. In the present paper we propose a novel contour integral method for \eqref{mainpb}, which includes a preliminary study of the pseudospectral level curves of $A$.

In the present paper we assume that the Laplace transform of the source term $b(t)$ exists, is available, and admits a bounded  analytic extension to a big region of the complex plane outside the spectrum of $A$. The case of more general sources is out of the scope of the present manuscript but has been considered in the literature \cite{SST}. Under these hypotheses, we can apply the (unilateral) Laplace transform ${\cal L}: f\mapsto\hat{f}(z):=\int_{0}^{+\infty}e^{-zt}f(t)\,dt $ to both the sides of the system in \eqref{mainpb}, which leads to the following algebraic equation for $\hat{u}={\cal L}(u)$:
\begin{equation}\label{algEqLap}
\hat{u}(z)=\left(zI-A\right)^{-1}\left(u_0+\hat{b}(z)\right)\,,
\end{equation}where $\hat{b}={\cal L}(b)$ and $I$ stands for the identity matrix. The inversion formula for the Laplace transform provides the following representation of the unknown function $u$:
\begin{equation}\label{bromwich}
u(t)=\frac{1}{2\pi \opi}\int_{{\cal G}}e^{zt}\hat{u}(z)\,dz,
\end{equation}
where ${\cal G}$ is a deformation of a Bromwich contour, which can be taken as an open regular curve running from $-\opi\infty$ to $+\opi\infty$ and such that all singularities of $\hat{u}$ are to its left. Thus, ${\cal G}$ must leave to its left the eigenvalues of $A$ and all possible singularities of $\hat{b}$. The discretization of (\ref{bromwich}) by some quadrature rule will provide an approximation of $u(t)$, for a given $t$.
Assuming that the Laplace transform can be analytically extended to the left half of the complex plane and that this extension is properly bounded with respect to $z$, several authors have proposed different contour profiles and parametrizations for ${\cal G}$. Probably the first relevant related reference is \cite{T}, where the author analyzes a cotangent mapping with horizontal asymptotes and the classical trapezoidal rule for the inversion of scalar Laplace transforms. Much more recently, the cotangent contour have been investigated and improved \cite{DW}. Alternatively hyperbolic contours have been considered in \cite{GaMa,LP,LPS,SST}, with a focus on evolutionary problems governed by sectorial operators. In \cite{W} a parabolic profile for $\cal G$ is chosen. In all these references, the resulting scheme converges with spectral accuracy with different rates of convergence according to the particular application and the range of times at which the inverse Laplace transform is required. An application of the parabolic contour is studied in \cite{ITHWeid} to solve precisely Black--Scholes and Heston equations. These methods require in practice some \textit{a priori} knowledge of the pseudospectral geometry of $A$. In particular, in \cite{ITHWeid,W} a critical parabola bounding the resolvent norm of the (continuous) differential operator under consideration is used. This information is available only for few operators. For more complicated equations, such as Heston's equation, the critical parabola is only guessed. Also in \cite{LPS} a preliminary information about the pseudospectral behaviour is needed in order to efficiently set the sector of the complex plane outside which the resolvent norm is analytic and bounded by some suitable constant. In the present work we present a method that combines a preliminary numerical investigation of the pseudospectral level sets of $A$ with the efficient inversion of the Laplace transform. This feature makes the applicability of our method much wider.

Our error analysis will show that ${\cal G}$ in \eqref{bromwich} can be replaced by a finite open arc of an ellipse. In \cite{RT} the behaviour of the resolvent norm of some simple convection-diffusion operator is studied and it is shown that there exists a parabola containing the portion of the complex plane where this norm grows unboundedly, being this the starting point in \cite{ITHWeid,W}. But if a spatial semi-discretization is applied, the differential operator is approximated by a matrix $A$, which has a finite spectrum and closed curves surrounding the eigenvalues as pseudospectral level curves, see \cite[Figures 5, 6]{RT}. Moreover, since the exponential factor in (\ref{bromwich}) drags down the norm of the integrand function as $\real(z)$ moves to the left, we are interested in observing the pseudospectral behaviour of $A$ only in a vertical strip of the complex plane. For these reasons we found that an open arc of an ellipse is a good candidate to efficiently bound the resolvent norm of $A$ in a region of interest. Moreover, the choice of an elliptic profile has never been investigated in the literature, and its performance turns out to be competitive w.r.t. other methods. Our final selection of the integration profile relies both on the computation of some pseudospectral level curves associated to $A$ and on the optimization of the error bounds which we derive for our quadrature. Our method is strongly based on the theoretical error estimate of the trapezoidal rule when applied to exponentially decaying analytic integrands \cite{BLS,JavT,TW}.

Apart from being able to deal with more general problems than those considered in \cite{ITHWeid,LPS,W}, our new method enjoys the following important advantages:
\begin{itemize}
 \item it is able to approximate the solution of \eqref{mainpb} uniformly for $t$ belonging to large time windows.
 \item  round-off errors are controlled in a robust and systematic way.
 \item it is highly parallellizable.
 \item it provides an approximation of the solution $u$ to \eqref{mainpb} at a desired time (or time window) without computing any history.
 \item its performance is not affected by the lack of regularity (in space) of the initial data $u_0$ in \eqref{mainpb}.
 \item it provides an approximation within a prescribed target accuracy $tol$ by dynamically increasing the number of quadrature points on the integration contour, without changing the integration profile and taking advantage of previous computations.
 \end{itemize}

The article is organized as follows. In Section \ref{PrelAbTra} we review the error estimate for the trapezoidal rule when applied to exponentially decaying integrands. In Section \ref{ANwMet} our new method is fully described and analyzed. In Section \ref{practImpl} we provide details about the practical implementation and the computational cost. In Section \ref{sec:num}, after a first illustrative application of the method to a canonical convection-diffusion equation,  we test the method on Black-Scholes and Heston equations. We compare our new method with the methods in \cite{ITHWeid} and \cite{LPS} in Section \ref{secComp}. Finally, in Section \ref{timInt} we extend our algorithm to approximate the solution $u(t)$ for $t$ in a time window $[t_0,t_1]$ by using a unique integration contour.

\subsection*{The new integration contour}

We propose a contour $\mathcal{G}$ in \eqref{bromwich} which is the union of two half-lines connected with an open arc of an ellipse as shown in Figure~\ref{plot_profile}.

\begin{figure}[h] 
\begin{center}
\includegraphics[scale=0.35]{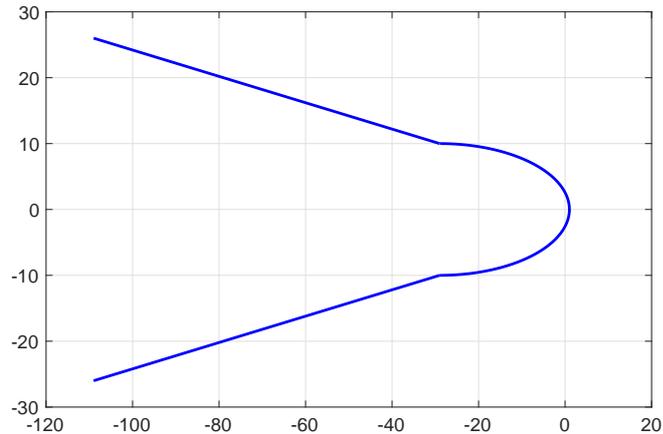}
\end{center}
\caption{Shape of the general integration profile ${\cal G}$. \label{plot_profile}}
\end{figure}

In particular ${\cal G}$ is defined as $\{ z(x) : x \in (-\infty,\infty)\}$, with
\begin{equation}\label{Gamma}
z(x)=\left\{\begin{array}{lr}
\ell_1(x)    & x\in\left[-\infty,-\frac{\pi}{2}\right] \\[1mm]
\Gamma(x),\,      & x\in\left[-\frac{\pi}{2},\frac{\pi}{2}\right] \\[1mm]
\ell_2(x)\,, & x\in\left[\frac{\pi}{2},+\infty\right], \\
\end{array}\right.
\end{equation}
where, for constant parameters $A_1, A_2, A_3$ to be determined,
$$
\Gamma(x) = A_1\cos x+\opi A_2\sin x+A_3
$$
parameterizes the elliptic arc and
$$ \ell_{1}(x) = A_3 + x+\frac{\pi}{2}-\opi\left(A_2 - d\left(x+\frac{\pi}{2}\right)\right),\quad
\ell_2(x) = A_3-x+\frac{\pi}{2}+\opi\left(A_2+d\left(x-\frac{\pi}{2}\right)\right)
$$
parameterize the half-lines.

{We recall that the constants in the parametrization must be chosen so that the resulting contour ${\cal G}$ leaves to its left the spectrum of $A$ and the singularities of $\hat{b}$. There is quite some freedom in the choice of the two half-lines, as along as their real part goes to minus infinity as $z\to \infty$. Actually, the contribution of the half-lines to the contour integral is expected to be small and will be neglected in practice, as we explain in Subsection \ref{subTruncErr}.}


\section{Preliminaries about the trapezoidal rule}\label{PrelAbTra}
{We will apply the classical trapezoidal rule to approximate \eqref{bromwich}, after parametrization by \eqref{Gamma}. The resulting integral will be of the form}
\begin{equation}\label{integral}
	I=\int_{-\xi}^{\xi}F(x)\,dx,
\end{equation}with $F$ satisfying the properties listed in Assumption \ref{assumptF}.
\begin{assumpt}\label{assumptF}
The complex extension of the integrand function $F$ in \eqref{integral} satisfies the following properties:

For {some} $a>0$
\begin{enumerate}

\item $F(w)$ ($w \in \bC$) is analytic and bounded {inside} the strip $\left[-\pi/2,\pi/2\right]\times[-\opi a,\opi a]$;\\[.2em]

\item $\displaystyle{\left\|F(w)\right\|=\left\|F(-\overline{w})\right\|}$ inside the strip $\left[-\pi/2,\pi/2\right]\times[-\opi a,\opi a]$;
\\[.2em]

\item $\exists \eta_0>0 $ ($\eta_0<\xi$) and $\exists B_{\pm}>0$ such that $\forall \eta\leq \eta_0$ one has that
\[
	\left|F(x \pm \opi a)\right|\leq B_{\pm}\,,\quad \forall x \in[-\xi-\eta,\xi+\eta]\,.
\]

\item For the same $\eta_0$ of the previous point, $\exists S_{\pm}>0$ such that $\forall \eta\leq\eta_0$
\[
	\left|F(x \pm \opi a)\right|\leq S_{\pm}\,,\quad \forall x \in\left[-\frac{\pi}{2},-\xi-\eta\right]\cup\left[\xi+\eta,\frac{\pi}{2}\right]\,.
\]

\item $\forall x\in \mathbb{R}$ such that $\left|x\right|\geq \xi$, one has $\left|F(x)\right|\leq\theta$,  for a certain $\theta>0$.
\end{enumerate}
\end{assumpt}

Under these hypotheses it is possible to prove the following Theorem
\begin{theorem}\label{convTh}
{Consider the integral $I$ in \eqref{integral}, $N\ge 1$, and the discretization of $I$ by the quadrature formula}
\[
	I_N=\frac{2\xi}{N}\sum_{j=1}^{N-1}F(x_j)\, \quad \mbox{with } \ x_j=-\xi+ j \frac{2\xi }{N},\quad j=1,\ldots,N-1.
\]
Assume that $F$ satisfies Assumption~\ref{assumptF} and take $\eta=\xi/N$. Then
\begin{eqnarray}
\label{firstErr}
\left|I-I_N\right| & \leq & \frac{2(\xi+\eta)\left(B_++B_-\right)+
2\left(\pi/2-\delta-\xi-\eta\right)\left(S_++S_-\right)}{e^{\frac{a\pi N}{\xi}}-1}+\\
\nonumber
&& +4\left(\frac{\pi}{2}-\xi+\frac{\eta}{2}\right)\theta+4\frac{\xi\log2}{N\pi}\max_{w\in[-a,a]}\left|F(-\pi/2+\delta+iw)\right|,
\end{eqnarray}
with
\begin{equation}\label{delta}
\delta=\pi/2-(2k+1)\eta-\xi\,,\qquad k=\left\lfloor{\left(\frac{\pi}{2}-\xi-\eta\right)\frac{1}{2\eta}}\right\rfloor\,.
\end{equation}
\end{theorem}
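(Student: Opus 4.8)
The plan is to represent the quadrature error by a contour integral and to read off the three groups of terms in \eqref{firstErr} from the three parts of that contour. I set $h=2\xi/N$ (so that $\eta=h/2$) and introduce the two kernels
\[
\psi_\pm(w)=\frac{1}{e^{\pm 2\pi\opi(w+\xi)/h}-1},
\]
whose only singularities are simple poles at the abscissae $x_j=-\xi+jh$, $j\in\mathbb{Z}$, with residues $\mp h/(2\pi\opi)$, which satisfy the identity $\psi_++\psi_-=-1$, and which are exponentially small on the far horizontal line of the strip: $|\psi_+(x+\opi a)|\le(e^{2\pi a/h}-1)^{-1}$ and likewise for $\psi_-$ on $\mathrm{Im}\,w=-a$. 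Since $e^{2\pi a/h}=e^{a\pi N/\xi}$, this factor will produce the denominator of the first term. I would then integrate $F\psi_\pm$ over the boundary of the rectangle $[-\pi/2+\delta,\pi/2-\delta]\times[-a,a]$; the definition of $\delta$ and $k$ in \eqref{delta} is precisely what places the two vertical sides $\mathrm{Re}\,w=\pm(\pi/2-\delta)=\pm(\xi+(2k+1)\eta)$ at midpoints between consecutive abscissae, so that no pole lies on the contour and the enclosed nodes are exactly $x_{-k},\dots,x_{N+k}$.

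First I would establish, by applying the residue theorem to the upper and lower half-rectangles separately (indenting around the real-axis poles and using $\psi_++\psi_-=-1$ to turn the two principal-value real-axis integrals into a single $-\int F\,dx$), the exact identity
\[
h\sum_{j=-k}^{N+k}F(x_j)-\int_{-\pi/2+\delta}^{\pi/2-\delta}F(x)\,dx
=\sum_{\text{edges}}\int_{\text{edge}}F(w)\psi_\pm(w)\,dw .
\]
On the two horizontal edges the kernel is bounded by $(e^{a\pi N/\xi}-1)^{-1}$, so their contribution is at most that factor times $\int_{-\pi/2+\delta}^{\pi/2-\delta}\big(|F(x+\opi a)|+|F(x-\opi a)|\big)\,dx$; splitting this integral into the central band $[-\xi-\eta,\xi+\eta]$ (Assumption~\ref{assumptF}.3, constants $B_\pm$, length $2(\xi+\eta)$) and the two outer bands (Assumption~\ref{assumptF}.4, constants $S_\pm$, total length $2(\pi/2-\delta-\xi-\eta)$) reproduces the first term of \eqref{firstErr}. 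On each vertical edge the midpoint placement gives $e^{2\pi\opi(w+\xi)/h}=-e^{-2\pi y/h}$ on $w=\pm(\pi/2-\delta)+\opi y$, whence $\int_{0}^{a}|\psi_+|\,dy$ and $\int_{-a}^{0}|\psi_-|\,dy$ are each at most $\tfrac{h}{2\pi}\int_{0}^{\infty}\frac{du}{1+e^{u}}=\tfrac{h\log 2}{2\pi}$; summing the upper and lower halves of both vertical edges and using the reflection symmetry of Assumption~\ref{assumptF}.2 to identify the right edge with the left yields the last term $4\frac{\xi\log2}{N\pi}\max_{w\in[-a,a]}|F(-\pi/2+\delta+\opi w)|$.

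It then remains to pass from the enclosed sum and the widened integral back to $I_N$ and $I$. Since every abscissa with $|x_j|\ge\xi$ satisfies $|F(x_j)|\le\theta$ (Assumption~\ref{assumptF}.5), dropping the $2(k+1)$ extra nodes $x_{-k},\dots,x_0$ and $x_N,\dots,x_{N+k}$ costs at most $4(k+1)\eta\theta$; likewise the two excised integration tails have total length $2(\pi/2-\delta-\xi)=2(2k+1)\eta$ and $|F|\le\theta$ there, costing at most $2(2k+1)\eta\theta$. Adding these gives $(8k+6)\eta\theta=\big[4(\pi/2-\xi)+2\eta-4\delta\big]\theta\le 4(\pi/2-\xi+\eta/2)\theta$ because $\delta\ge0$, which is exactly the middle term; a final triangle inequality assembles \eqref{firstErr}.

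I expect the single delicate point to be the residue bookkeeping in the second step, because the poles sit \emph{exactly} on the integration path: getting the orientations, the half-residue indentations, and the cancellation $\psi_++\psi_-=-1$ to combine correctly into ``trapezoidal sum minus integral'' is where care is needed. Once that exact identity is in hand, the horizontal- and vertical-edge estimates are routine consequences of Assumption~\ref{assumptF}. A cleaner alternative that sidesteps the indentation is to push the two horizontal edges infinitesimally off the real axis from the outset and pass to the limit, which I would adopt if the direct computation becomes unwieldy.
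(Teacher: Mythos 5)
Your proposal is correct and follows essentially the same route as the paper: the paper's single kernel $m(w)$ combined with the step function $g(w)$ is algebraically exactly your pair $\psi_{\pm}$ applied to the two half-rectangles (one checks $g-m=\psi_-$ for $\imag w>0$ and $g-m=-\psi_+$ for $\imag w<0$), your one large rectangle $[-\pi/2+\delta,\pi/2-\delta]\times[-\opi a,\opi a]$ simply merges the paper's central rectangle $R$ with its two auxiliary side rectangles $R^L$, and your accounting reproduces the same three groups of terms with the same constants, including the $(8k+6)\eta\theta\le 4(\pi/2-\xi+\eta/2)\theta$ middle term. The only slip is that your $\pm$ assignment is reversed --- it is $\psi_-$, not $\psi_+$, that is bounded by $(e^{2\pi a/h}-1)^{-1}$ on $\imag w=+a$ and that decays on the upper halves of the vertical edges (and the residue of $\psi_+$ at $x_j$ is $+h/(2\pi\opi)$) --- but this is precisely the bookkeeping you flag as delicate and it does not affect the structure or the final estimate.
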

\begin{proof}
{The proof is a variant of the one in \cite[Appendix]{BLS}}.

We consider the rectangle $R=[-\xi-\eta,\xi+\eta]\times [-\opi a,\opi a]$ and call $\Gamma_1$ the union of its horizontal sides, $\Gamma_2$ and $\Gamma_3$ its vertical left and right sides, respectively (see Figure~\ref{plotRect}).
\begin{figure}
\begin{center}
\includegraphics[scale=0.4]{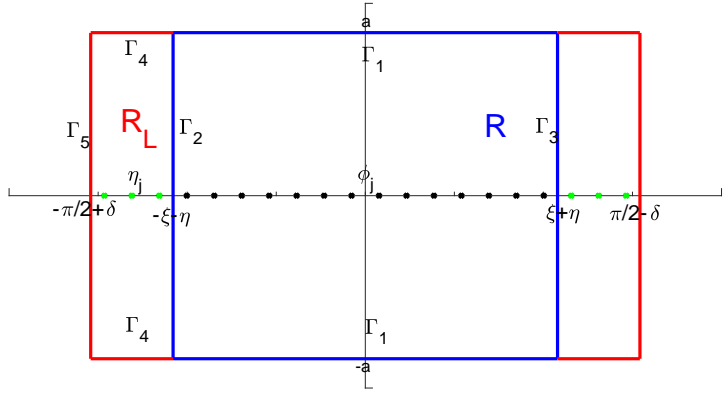}
\end{center}
\caption{The rectangle $R$ \label{plotRect}}
\end{figure}
Consider the integral
\[
	\tilde{I}=\int_{-\xi-\eta}^{\xi+\eta}F(x)\,dx\,.
\]
We have
\begin{equation}\label{estTr}
	\left|I-\tilde{I}\right|\leq 2\eta\theta\,.
\end{equation} {On the one hand, for} $w \in \bC$,
\[
	g(w)=\left\{\begin{array}{rr}
		-\frac{1}{2}\,, & \imag(w)>0\,,\\[.5em]
		\frac{1}{2}\,, &\imag(w)\leq 0\,,\\
	\end{array}	
	\right.
\]{it follows }
\[
	\tilde{I}=\int_{\partial R}g(w)F(w)\,dw\,.
\]On the other hand, define
\[
	\tilde{I}_N=\frac{2\xi}{N}\sum_{j=0}^NF(x_j)\,,\quad x_j=-\xi+\frac{2\xi j}{N}\,, j=0,\ldots,N\,.
\]One has
\[
	\left|I_N-\tilde{I}_N\right|\leq \frac{4\xi}{N}\theta=4\eta\theta\,.
\] The function
\[
	m(w):=\frac{1}{2}\frac{1+e^{-\opi \frac{(w+\xi) N\pi}{\xi}}}{1-e^{-\opi \frac{(w+\xi) N\pi}{\xi}}}
\] satisfies by the residue Theorem
\[
	\tilde{I}_N=\int_{\partial R}m(w)F(w)\,dw\,.
\]Observe that
\[
	err_N=\left|I-I_N\right|\leq\left|I-\tilde{I}\right|+\left|\tilde{I}-\tilde{I}_N\right|+\left|\tilde{I}_N-I_N\right|
\]Let us consider
\[
	\tilde{I}-\tilde{I}_N=\int_{\partial R}\left(g(w)-m(w)\right)F(w)\,dw=
	\left(\int_{\Gamma_1}+\int_{\Gamma_3}-\int_{\Gamma_2}\right)\left[\left(g(w)-m(w)\right)F(w)\right]\,dw\,.
\]
We estimate
\begin{equation}\label{estH}
	\left|\int_{\Gamma_1}\left(g(w)-m(w)\right)F(w)\,dw\right|\leq \frac{2(\xi+\eta)\left(B_++B_-\right)}{e^{\frac{a\pi N}{\xi}}-1}\,.
\end{equation}
We estimate the integrals over $\Gamma_{2}, \Gamma_3$. 
Take the rectangle $R^L=\left[-\pi/2+\delta,-\xi-\eta\right]\times[-\opi a,\opi a]$ and call $\Gamma_5$ its left vertical side and $\Gamma_4$ the union of its horizontal sides. {We define $\delta$ as in the statement of the Theorem, so that $\left[-\pi/2+\delta,-\xi-\eta\right]$ is the largest segment with length an even multiple of $\eta$}. We have that
\[
	err_N^L:=\int_{-\frac{\pi}{2}+\delta}^{-\xi-\eta}F(x)\,dx-\frac{2\xi}{N}\sum_{j=0}^{k}F(\eta_j)=\int_{\partial R_L}\left(g(w)-m(w)\right)F(w)\,dw\,,
\]
with $\eta_j=-\xi-\frac{2\xi j}{N}$, $j=1,\ldots,k$. In other words $err_N^L$ is the error of the same trapezoidal quadrature rule applied on the integral on the interval
$[-\pi/2+\delta,-\xi-\eta]$ with the same spacing $2\xi/N$. In this way, we get
\[
	\int_{\Gamma_{2}}\left(g(w)-m(w)\right)F(w)\,dw=err_N^L+\left(-\int_{\Gamma_4}+\int_{\Gamma_5}\right)\left[\left(g(w)-m(w)\right)F(w)\,dw\right]\,.
\]
On the one hand, we estimate (with $w=x+ \opi y$)
\begin{equation}\label{estHL}
\left|\int_{\Gamma_4}\left(g(w)-m(w)\right)F(w)\,dw\right|\leq\frac{\left(\pi/2-\delta-\xi-\eta\right)\left(S_++S_-\right)}{e^\frac{a\pi N}{\xi}-1}\,.
\end{equation}On the other hand, we estimate
\[
\left|\int_{\Gamma_5}\left(g(w)-m(w)\right)F(w)\,dw\right|\leq \int_{-a}^a\left|\left(g(-\pi/2+\delta+\opi y)-m(-\pi/2+\delta+\opi y)\right))
F(-\pi/2+\delta+\opi y)\,dy\right|\leq
\]
\[
	\leq\max_{y\in[-a,a]}\left|F(-\pi/2+\delta+\opi y)\right|\int_{-a}^a\left|g(-\pi/2+\delta+\opi y)-m(-\pi/2+\delta+\opi y)\right|\,dy
\]and
\[
	\int_{-a}^a\left|g(-\pi/2+\delta+\opi y)-m(-\pi/2+\delta+\opi y)\right|\,dy= 2\int_{0}^{a}\frac{1}{1+e^{\frac{\pi w N}{\xi}}}\,dy
	\leq
\]
\[
	\leq 2\int_{0}^{+\infty}\frac{1}{1+e^{\frac{\pi w N}{\xi}}}\,dy=\frac{2\log 2}{\pi}\frac{\xi}{N}\,,
\]
so that
\begin{equation}\label{estD}
\left|\int_{\Gamma_5}\left(g(w)-m(w)\right)F(w)\,dw\right|\leq\max_{y \in[-a,a]}\left|F(-\pi/2+\delta+\opi y)\right|\frac{2\log 2}{\pi}\frac{\xi}{N}\,.
\end{equation}
{We finally obtain}
\begin{equation}\label{estErrL}
	\left|err_N^L\right|=\left|\int_{-\frac{\pi}{2}+\delta}^{-\xi-\eta}F(x)\,dx-\frac{2\xi}{N}\sum_{j=1}^{k}F(\eta_j)\right|\leq
	\int_{-\frac{\pi}{2}+\delta}^{-\xi-\eta}\left|F(x)\right|\,dx+\frac{2\xi}{N}\sum_{j=1}^{k}\left|F(\eta_j)\right|\leq
	2\theta\left(\frac{\pi}{2}-\xi-\eta\right)\,.
\end{equation}

{The integral over $\Gamma_3$ can be estimated in an analogous way. The combination of estimates \eqref{estTr}, \eqref{estH}, \eqref{estHL}, \eqref{estD} and \eqref{estErrL} yields the stated result}.
\end{proof}

\section{A new method}\label{ANwMet}
The only portion of the integration contour ${\cal G}$ as defined in \eqref{Gamma} that we will use in practice is the half ellipse. As already done in \cite{LP,LPS,ITHWeid}, this elliptical profile is selected through the construction of a suitable conformal mapping $z:\left[-\pi,\pi\right]\times[-\opi a,\opi a]\rightarrow \mathbb{C}$. In particular we want it to map horizontal segments onto ellipses in the complex plane (we will use the right half of these ellipses). Thus we set
\begin{equation}\label{mapping}
	z(x+iy)=A_1(y)\cos x+iA_2(y)\sin x+A_3(y)\,.
\end{equation}
\begin{figure}[h!]
\includegraphics[scale=0.3]{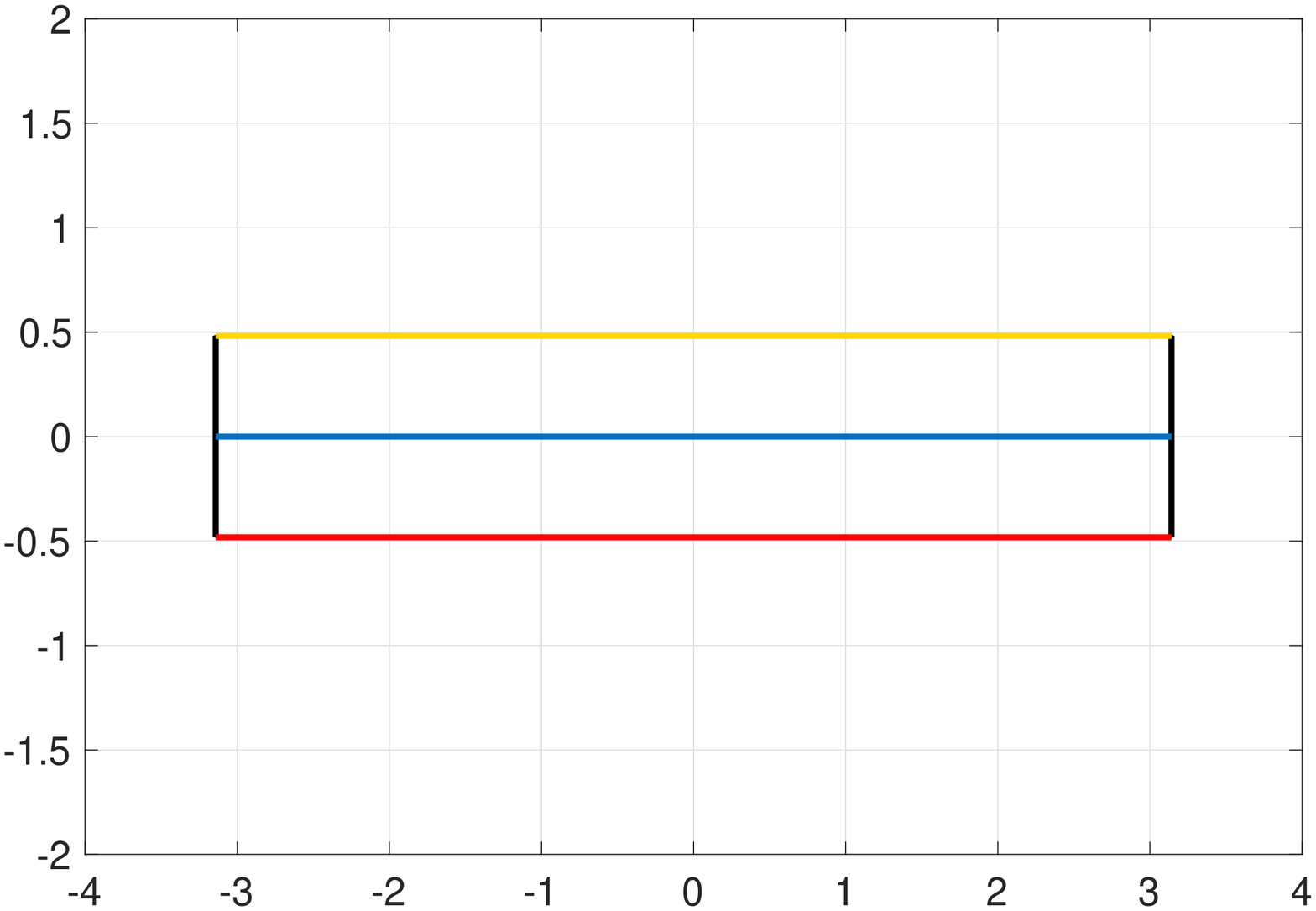}\includegraphics[scale=0.3]{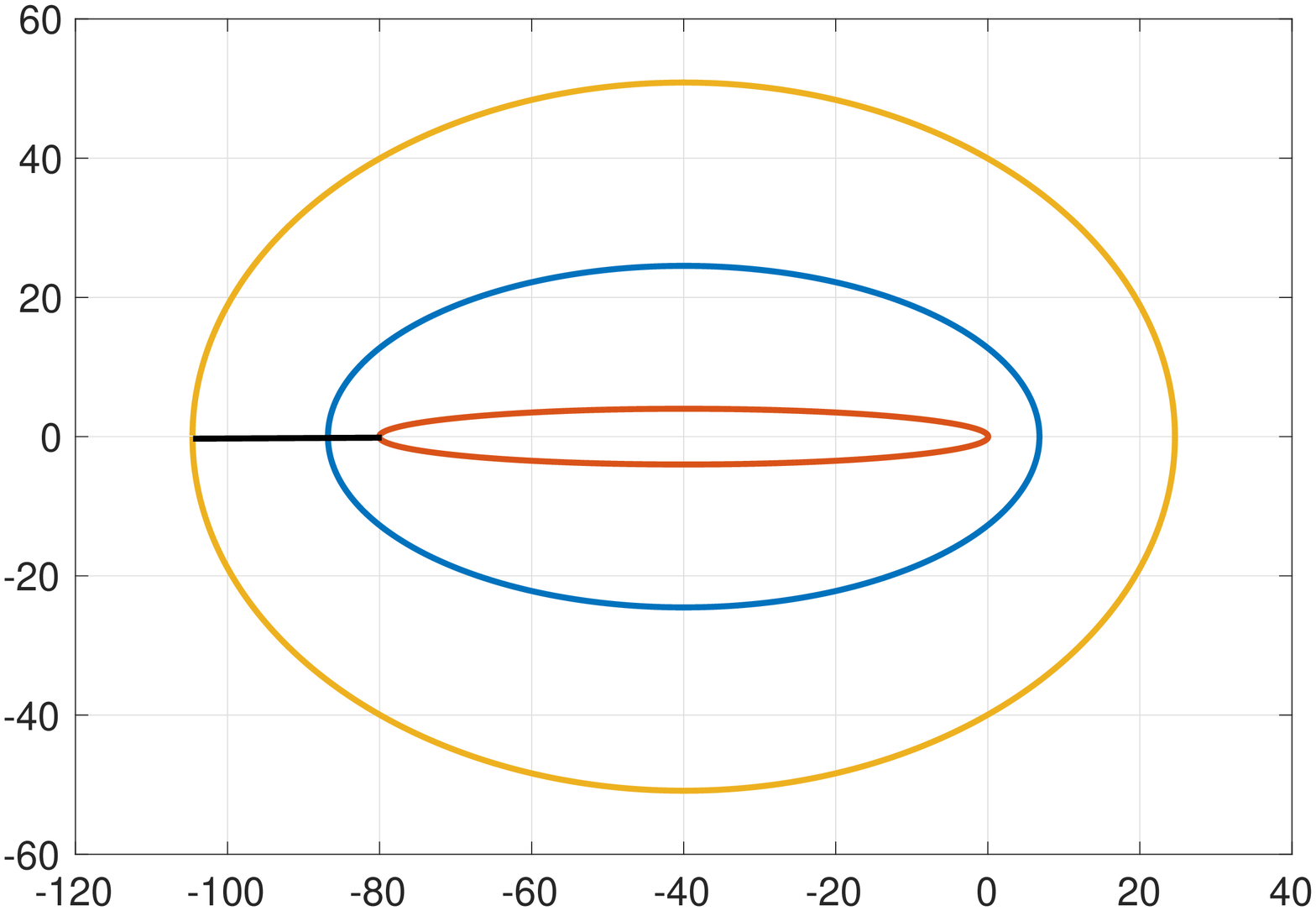}
\caption{Action of the conformal mapping $z(w)$ : it transforms horizontal segments in the complex plane (left) into ellipses (right).}
\end{figure}We ask \eqref{mapping} to be holomorphic and impose the Cauchy-Riemann equations, getting that $A_3$ has to be constant and
\begin{eqnarray}
\label{A1}
A_1(y)=a_1e^y+a_2e^{-y}\\
\label{A2}
A_2(y)=a_2e^{-y}-a_1e^y\nonumber
\end{eqnarray}  where $a_1,a_2$ are constants. The resulting mapping turns out to be entire, since the partial derivatives 
of both the real and the imaginary parts are everywhere continuous. It is thus invertible and the complex derivative cannot be zero.

The core idea of using the mapping \eqref{mapping} is to let $y$ vary in the segment $[-a,a]$, for a suitable $a$, while the integrand function
\begin{equation}\label{integrandFunction}
G(w)=e^{z(w)t}\left(z(w)I-A\right)^{-1}\left(u_0+\hat{b}(z(w))\right)z'(w)
\end{equation}stays bounded for every $w=x+\opi y$, $x\in[-\pi/2,\pi/2]$, $y\in [-a,a]$. In particular, we will use the mapping \eqref{mapping} to efficiently bound the exponential term $e^{z(w)t}$ and the norm of the resolvent $\left(z(w)I-A\right)^{-1}$. We are doing this by constructing two \textit{external} half ellipses $\Gamma_+,\Gamma_-$ delimiting the part of the complex plane where the integrand is bounded. The mapping \eqref{mapping} is asked to map the segments $[-\pi/2,\pi/2]\times\left[-\opi a, \opi a\right]$ onto this two curves. Then, we will use as integration profile $\Gamma$ the one corresponding to the image of the real segment $[-\pi/2,\pi/2]$. In formulas, the actual profile of integration that we use is parameterized as
\begin{equation}\label{gammaActual}
\Gamma: x\mapsto (a_1+a_2)\cos x+\opi(a_2-a_1)\sin x+A_3, \quad x\in \left[-\frac{\pi}{2}, \frac{\pi}{2} \right].
\end{equation}Let us focus first on the resolvent. We will construct an ellipse $\tilde{\Gamma}$ in the complex plane such that the resolvent norm is bounded on its right half. In particular, $\tilde{\Gamma}$ satisfies the following properties
\begin{assumpt}\label{assumptionG}
We assume that an ellipse $\tilde{\Gamma}$ of center $z_l\in\mathbb{R}$ and right intersection with the real axis $z_r$ is given in such a way that:\begin{enumerate}
\item its right half, that from now on we call $\Gamma_+$ leaves to its left the spectrum of $A$, $\sigma(A)$, and the set of the singularities of the function $\hat{b}$;
\item $e^{z_lt}<\eps$, $\eps$ being the working precision; 
\item there exists $R_+$ s. t. $\left\|\left(zI-A\right)^{-1}\right\|\leq R_+$ for all $z\in\Gamma_+$; 
\item there exists $W_+$ s. t. $e^{\real(z)t}\left\|\left(zI-A\right)^{-1}\right\|\leq W_+$, for all $z\in \Gamma_+$. 
\end{enumerate}
\end{assumpt}

For instance we can choose $e^{z_lt}=10^{-18}$, $R_+=10^{13}$ and $W_+=10^{9}$.

 $\Gamma_+$ is uniquely defined by $z_l,z_r$ and a further point $p \in\Gamma_+$, $p\neq z_r$. The construction of $\Gamma_+$, as explained in Subsection \ref{constrEll}, takes as input parameters the values $z_l,z_r$ and returns a value $p$, which is computed accordingly to the pseudospectral geometry of the operator $A$.

In general, an ellipse satisfying Assumption~\ref{assumptionG} for a given operator $A$ and vector $\hat{b}$ is not known \textit{a priori}. Assume that a closed curve ${\cal C}$ surrounding the spectrum of $A$ and the possible singularities of $\hat{b}$ is known. Moreover , we assume that ${\cal C}$ encloses the portion of the complex plane where the resolvent norm $\left\|\left(zI-A\right)^{-1}\right\|$ is large. Based on $\cal C$, a  general algorithmic strategy for computing numerically $\Gamma_+$ is proposed in Section \ref{constrEll}. Since for a general matrix $A$ we do not have any information about its pseudospectral geometry, we approximate $\cal C$ by using \texttt{eigtool} \cite{eigtool}. The construction is general and it does not require any {\em a priori} information about the spectral behaviour of $A$. Moreover, for the applications under consideration we show that a low resolution when approximating $\cal C$ might be enough for our purposes, see Section \ref{compCost}. Notice that, if more information about the pseudospectral behaviour of $A$ is available, the use of \texttt{eigtool} can be avoided.

Once $\Gamma_+$ is given, we want \eqref{mapping} to map the segment $[-\pi/2 + \opi a,\pi/2 + \opi a]$, for an $a$ to be fixed, onto $\Gamma_+$.
Imposing the ellipse $z(\cdot+\opi a)$ to be centered at $z_l$ and to pass through the points $z_r,\, d+\opi r $, we get
\begin{equation}\label{eq1}
a_1e^a+a_2e^{-a}=z_r-z_l,\quad
a_2e^{-a}-a_1e^a=\frac{r}{\sin\tilde{w}},\quad
A_3=z_l,
\end{equation}
where
\begin{equation*} 
	\tilde{w}=\arccos\left(\frac{d-z_l}{z_r-z_l}\right)\,.
\end{equation*}
 Solving \eqref{eq1} for $a_1,a_2, A_3$ we get
\begin{eqnarray}\label{a1}
a_1&=&\frac{e^{-a}}{2}\left(z_r-z_l-\frac{r}{\sin\tilde{w}}\right),\\\label{a2}
a_2&=&\frac{e^{a}}{2}\left(z_r-z_l+\frac{r}{\sin\tilde{w}}\right),\\\label{C}
A_3&=&z_l,\nonumber
\end{eqnarray}
which only depend on the real parameter $a$.
\subsection{Quadrature error estimates for the new integration contour}\label{ChTarget}

Assume we are interested in approximating the unknown function $u$ up to a certain precision that we call $tol$. Because of the presence of the exponential, we expect the integrand function $G$ \eqref{integrandFunction} to become smaller in modulus as $z$ moves from the right to the left on the profile of integration. For this reason, we would like to truncate the integral once the function $|G|$ reaches the value $tol$. The motivation for fixing $z_l$ as in point (2) of Assumption~\ref{assumptionG} is that we are assuming that the center of the half ellipse $\Gamma_+$ (that is also the center of the integration ellipse $\Gamma$) is negative enough to make the integrand function close to the working precision at $z(\pi/2)$. In this way, for every $tol$ greater than the working precision, we can efficiently use the half ellipse of integration $\Gamma$ in order to recover approximations of the solution of order $tol$. In practice, we assume that there exists a \textit{truncation parameter} $c\in]0,1/2[$, defined as
\begin{equation}\label{c1}
\left|G(c\pi)\right|=tol\,.
\end{equation}
The integrand we want to approximate is
\[
I=\int_{-c\pi}^{c\pi}G(x)\,dx= u(t)+{\cal O}(tol)
\]
In other words, we are neglecting not only the two half-lines of profile \eqref{Gamma}, but also the contribution to the integral coming from the portions of ellipse parametrized in the intervals $\left[-\pi/2,-c\pi\right]$ and $\left[c\pi,\pi/2\right]$ as on these intervals the modulus of the integrand function is expected to be lower than the precision $tol$, because of the rapidly decaying behaviour of the exponential.
Applying trapezoidal quadrature rule to $I$, we get the sum
\begin{equation}\nonumber
I_N=\frac{c}{\opi N}\sum_{j=1}^{N-1} e^{z(x_j)t}\left(z(x_j)I-A\right)^{-1}\left(u_0+\hat{b}(z(x_j))\right)z'(x_j)\,,\quad x_j=-c\pi+j\frac{2c\pi}{N}.
\end{equation}
We remark that, since the profile of integration is symmetric w.r.t. the real axis, the quadrature sum can be simplified to
\begin{equation}\label{quadrature2}
I_N=\frac{2c}{N}\imag \left(\sum_{j=\left \lceil{\frac{N}{2}}\right \rceil}^{N-1} e^{z(x_j)t}\left(z(x_j)I-A\right)^{-1}\left(u_0+\hat{b}(z(x_j))\right)z'(x_j)\right), \quad x_j=-c\pi+j\frac{2c\pi}{N}.
\end{equation}
From Theorem~\ref{convTh} we get the following result.
\begin{theorem} \label{th:errquad_ellipse}
Assume that the function \eqref{integrandFunction} is analytic and bounded on the rectangle $\left[-\frac{\pi}{2},\frac{\pi}{2}\right]\times[-\opi a,\opi a]$ for a certain $a>0$, with $z(w)=\left(a_1+a_2\right)\cos w+\opi\left(a_2-a_1\right)\sin w+A_3$ and $a_1,a_2,A_3$ given by \eqref{a1}, \eqref{a2}, \eqref{C}. Assume moreover that the ellipse of integration has foci on the real axis. Set
\begin{eqnarray}\label{MP}
M_{+}=\frac{1}{2\pi}\max_{x\in[-\pi/2,\pi/2]}\left|e^{z(x+ \opi a)t}\left(z(x+ \opi a)I-A\right)^{-1}\left(u_0+\hat{b}\left(z(x+ \opi a)\right)\right)z'(x+ \opi a)\right|
\\
\label{MM}
M_{-}=\frac{1}{2\pi}\max_{|x|\in[0,c\pi+c\pi/N]}\left|e^{z(x- \opi a)t}\left(z(x- \opi a)I-A\right)^{-1}\left(u_0+\hat{b}\left(z(x- \opi a)\right)\right)z'(x- \opi a)\right|
\\
\label{SM}
S_{-}=\frac{1}{2\pi}\max_{\left|x\right|\in[c\pi+c\pi/N,\pi/2]}\left|e^{z(x- \opi a)t}\left(z(x- \opi a)I-A\right)^{-1}\left(u_0+\hat{b}\left(z(x- \opi a)\right)\right)z'(x- \opi a)\right|
\end{eqnarray}
Finally, we assume that the integrand function $\left|G(w)\right|\leq tol$ for all $w\in[-\pi/2,c\pi]\cup[c\pi,\pi/2]$.
Then the quadrature error can be estimated by
\begin{eqnarray}\nonumber
err_N :=\left|I-I_N\right|\leq\frac{2\pi c\left(1+\frac{1}{N}\right)M_-+2(\pi/2-c\pi-c\pi/N)S_-+\pi M_+}{e^{\frac{a}{c}N}-1} + \\\label{estimateErr}
+4\left(\frac{\pi}{2}-c\pi+\frac{c\pi}{2N}\right)tol+\Delta\frac{2c\log 2}{N\pi}e^{\left(\left(a_1e^{-a}+a_2e^a\right)\cos\left(\pi/2-\delta\right)+A_3\right)t}
\end{eqnarray}
with $\left\|\hat{u}\left(z(\pi-\delta\pm\opi a)z'(\pi/2-\delta\pm\opi a)\right)\right\|\leq\Delta$ and $\delta$ given by \eqref{delta} (with $\xi=c\pi$).
\end{theorem}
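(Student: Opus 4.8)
The plan is to obtain Theorem~\ref{th:errquad_ellipse} as a specialization of the abstract quadrature estimate in Theorem~\ref{convTh}. First I would introduce the normalized integrand
\[
F(x)=\frac{1}{2\pi\opi}\,G(x),\qquad \xi=c\pi,\qquad \eta=\frac{\xi}{N}=\frac{c\pi}{N},
\]
with $G$ as in \eqref{integrandFunction}. With this choice $\int_{-c\pi}^{c\pi}F\,dx$ is the truncated Bromwich integral $u(t)+{\cal O}(tol)$, and the trapezoidal sum $\frac{2\xi}{N}\sum_j F(x_j)$ reproduces the weight $\frac{c}{\opi N}$ of $I_N$, so that $err_N=|I-I_N|$ is exactly the left-hand side of \eqref{firstErr} for this $F$. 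The task then reduces to checking Assumption~\ref{assumptF} with $\xi=c\pi$ and reading off the constants.

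Property~(1) of Assumption~\ref{assumptF} is nothing but the standing hypothesis of the present theorem, namely that $G$ is analytic and bounded on $[-\pi/2,\pi/2]\times[-\opi a,\opi a]$. For property~(2) I would use that the entire map satisfies $z(-\overline w)=\overline{z(w)}$, so that, provided $A$, $u_0$ and $\hat b$ are real, one gets $\|F(w)\|=\|F(-\overline w)\|$; this is the same symmetry that underlies the imaginary-part form \eqref{quadrature2}. For properties~(3)--(5) I would identify the bounds directly from the definitions \eqref{MP}--\eqref{SM}: on the lower edge $x-\opi a$ take $B_-=M_-$ over $|x|\leq c\pi+c\pi/N$ and $S_-$ over the complementary range up to $\pi/2$; on the upper edge $x+\opi a$ bound both $B_+$ and $S_+$ by the single quantity $M_+$, the maximum of $\frac{1}{2\pi}|G|$ over all of $[-\pi/2,\pi/2]$; and set $\theta=tol/(2\pi)$ from the truncation hypothesis $|G(w)|\leq tol$ for $|x|\geq c\pi$.

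Substituting into \eqref{firstErr} I would then simplify term by term. The denominator becomes $e^{a\pi N/\xi}-1=e^{aN/c}-1$. On the upper edge the two contributions collapse through
\[
2(\xi+\eta)B_+ + 2\!\left(\tfrac{\pi}{2}-\delta-\xi-\eta\right)S_+\leq 2\!\left(\tfrac{\pi}{2}-\delta\right)M_+\leq \pi M_+,
\]
whereas on the lower edge $2(\xi+\eta)B_-=2\pi c\!\left(1+\tfrac1N\right)M_-$ and the $S_-$ coefficient is enlarged from $\pi/2-\delta-\xi-\eta$ to $\pi/2-c\pi-c\pi/N$ by discarding the nonnegative $\delta$; together these give the first line of \eqref{estimateErr}. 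The $\theta$-term produces the $tol$-contribution $4\!\left(\pi/2-c\pi+c\pi/(2N)\right)tol$, bounding $tol/(2\pi)$ crudely by $tol$. For the last term I would evaluate the real part at the relevant corner, $\real\!\left(z(-\pi/2+\delta-\opi a)\right)=\left(a_1e^{-a}+a_2e^{a}\right)\cos(\pi/2-\delta)+A_3$, and bound the surviving resolvent-times-derivative factor by $\Delta$, converting $4\frac{\xi\log2}{N\pi}\max_w|F(-\pi/2+\delta+\opi w)|$ into $\Delta\frac{2c\log2}{N\pi}e^{((a_1e^{-a}+a_2e^{a})\cos(\pi/2-\delta)+A_3)t}$.

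The argument is therefore mostly careful bookkeeping, and the main thing to get right is the constant matching: tracking the $1/(2\pi)$ normalization consistently through every term, justifying the deliberately asymmetric treatment of the upper and lower horizontal edges (a single $M_+$ against the split $M_-,S_-$), and correctly locating the corner at which the vertical-edge exponential is largest so that the factor $e^{((a_1e^{-a}+a_2e^{a})\cos(\pi/2-\delta)+A_3)t}$ emerges. The reflection symmetry needed for property~(2), and hence the reduction to data on a single vertical side, is the one structural point that relies on the reality of $A$, $u_0$ and $\hat b$.
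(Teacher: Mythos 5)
Your proposal follows essentially the same route as the paper's proof: both specialize Theorem~\ref{convTh} with $\xi=c\pi$, identify $B_+=S_+=M_+$, $B_-=M_-$, take $\theta$ from the truncation hypothesis $|G|\leq tol$, and then convert the vertical-edge term of \eqref{firstErr} into the $\Delta$-term of \eqref{estimateErr}. Your bookkeeping of the $1/(2\pi)$ normalization and of the symmetry property (2) of Assumption~\ref{assumptF} is actually more explicit than what the paper writes. The one place where you assert rather than prove is the claim that the vertical-edge exponential is largest at the corner $y=-a$, i.e.\ that $\max_{y\in[-a,a]}\left(a_1e^{y}+a_2e^{-y}\right)=a_1e^{-a}+a_2e^{a}$; this is exactly where the hypothesis that the integration ellipse has real foci enters, and it is the only nontrivial content of the paper's proof. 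The paper argues: real foci force $a_1+a_2>a_2-a_1$, hence $a_1>0$ and (by \eqref{a2}) $a_2>0$; then $f(y)=a_1e^{y}+a_2e^{-y}$ has $f'(y)>0$ iff $e^{2y}>a_2/a_1=Ce^{2a}$ with $C>1$, so $f$ is decreasing on all of $[-a,a]$ and its maximum is at $y=-a$. Adding this short computation closes the only gap; the rest of your argument is correct.
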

\begin{proof}
The result follows immediately from Theorem \ref{convTh} with $\xi=c\pi$. Following the notation of Theorem \ref{convTh}, we set $B_+=S_+=M_+$ and $B_-=M_-$. In particular, we observe that $F(w)=e^{z(w)t}\hat{u}(z(w))z'(w)$ satisfies Assumption \ref{assumptF}. Now, let us estimate $\left|F(\pi/2-\delta+iy)\right|$ for $y\in[-a,a]$. We have
\[
	\left|e^{z\left(\pi/2-\delta+\opi y\right)t}\hat{u}\left(z\left(\pi/2-\delta+\opi y\right)\right)z'\left(\pi/2-\delta+\opi y\right)\right|\leq \Delta e^{\left(\left(a_1e^{y}+a_2e^{-y}\right)\cos\left(\pi/2-\delta\right)+A_3\right)t}
\]
We observe that, from the hypothesis that both the foci are real, the horizontal semi-axis of the ellipse is longer than the vertical one, and so $a_1+a_2>a_2-a_1$. Then $a_1$ is positive and so is $a_2$ (as from its definition in \eqref{a2}). So, it is straightforward to prove that the maximum of the exponential is attained for $y=-a$. Indeed consider the function $f(y)=a_1e^{y}+a_2e^{-y}$. Its derivative is $f'(y)=a_1e^{y}-a_2e^{-y}$ and it is positive if and only if
\[
	e^{2y}>\frac{a_2}{a_1}\,.
\]Recalling \eqref{a1}, \eqref{a2} this reads as
\[
	e^{2y}>Ce^{2a}
\]yhere $C=\frac{z_r-z_l+r/\sin\tilde{y}}{z_r-z_l-r/\sin\tilde{y}}>1$. Then, $f$ is increasing if and only if $y>a+\frac{\log C}{2}$ and, since $y\in[-a,a]$, $f$ attains its maximum for $y=-a$.
\end{proof}

\begin{rmk}\label{rmkB} In considering the term
\begin{equation}\label{B}
B=\frac{2c\log 2}{N\pi}e^{\left(\left(a_1e^{-a}+a_2e^a\right)\cos\left(\pi/2-\delta\right)+A_3\right)t},
\end{equation}
we have that $\delta\leq c\pi/N$ and then $\cos(\pi/2-\delta)\rightarrow 0$ for $N$ growing. Moreover, as from equation \eqref{C}, $A_3$ is chosen in order to have $e^{A_3t}$ smaller than the working precision. In the end, we expect $B$ to be much smaller than $tol$, at least for $N$ big enough. In practice, $B$ is very small (and negligible) also for very small values of $N$. To show this, we report in the following tables the size of the error B \eqref{B} that we observe in the numerical experiments displayed in Section \ref{sec:num}. The term $B$, which is computed for $N=5$, is much smaller than the accuracy $tol$ in every case.
\[	\begin{array}{c}
	\mbox{\textbf{Black-Scholes}}\\
	\begin{array}{|c|c|c|c|}\hline
		& t=1& & t=10\\\hline
		tol=5e-3 &1.1431e-18 &tol=5e-2& 1.5555e-17\\\hline
		tol=5e-6 &4.8763e-18 &tol=5e-4 &3.8418e-17\\\hline
		tol=5e-9 &4.1133e-14 & tol=5e-6 & 3.4109e-14\\\hline
		tol=5e-11 &1.6566e-17 &tol=5e-9 &2.0858e-14\\\hline
	\end{array}
	\end{array}
\]
\[
	\begin{array}{c}
	\mbox{\textbf{Heston}}\\
	\begin{array}{|c|c|c|c|}\hline
		& t=1& & t=10\\\hline
		tol=5e-2 &1.5409e-16 &tol=5e-2& 2.8627e-16\\\hline
		tol=5e-4 &3.3070e-16 &tol=5e-4 &5.7855e-14\\\hline
		tol=5e-6 &2.0416e-16 & tol=5e-5 & 1.3791e-16\\\hline
		tol=5e-8 &1.5943e-14 &tol=5e-6 &3.7098e-12\\\hline
	\end{array}
	\end{array}
\]
\end{rmk}

\begin{rmk}
It is possible to select a positive $\nu$ and consider the trapezoidal rule with a quadrature step $(\pi+2\nu)/M$ for some natural number $M$, and to change the proof of Theorem 1 in order to make $B$ as defined in \eqref{B}, by evaluating the modulus of the integrand function along boundary of the rectangle $[-\pi/2, \pi/2] \times [-\opi a,\opi a]$.

In this way, the double exponential appearing in \eqref{B} vanishes because of the multiplication by $\cos(\pi/2)$ and the term $B$ is even smaller. This change will made the length of the interval used to select the quadrature step dependent on the number of the nodes and so, it won't be possible any more to double this number saving the information computed on the previous nodes.
\end{rmk}

\subsection{Selection of the optimal integration contour}\label{innerConstr}
Once $a$ is fixed, the profile of integration $\Gamma$ is uniquely defined by equations \eqref{a1}, \eqref{a2}, \eqref{C}. To complete our construction, we need to ask the exponential part in the integrand $G$ to be bounded on the external half ellipse $\Gamma_-=z\left([-\pi/2,\pi/2]\times\left\{-\opi a\right\}\right)$. We have
\[
	e^{\real(\zeta)t}\leq e^{\real\left(z(-\opi a)\right)}\,,\quad \forall\zeta\in\Gamma_-\,.
\]Then, setting $z(-\opi a)=D$, being $D>0$, we get that
\[
	e^{zt}\leq e^{Dt}\,,\quad \forall z\in \Gamma_-\,.
\]Using \eqref{mapping}, \eqref{A1}, \eqref{A2}, we get
\begin{equation}\label{eq4}
a_1e^{-a}+a_2e^{a}+A_3=D
\end{equation}
and, recalling equations \eqref{a1}, \eqref{a2}, \eqref{C}, we get
\begin{equation}\label{D}
D=\frac{e^{-2a}}{2}\left(z_r-z_l-\frac{r}{\sin\tilde{w}}\right)+\frac{e^{2a}}{2}\left(z_r-z_l+\frac{r}{\sin\tilde{w}}\right)+z_l.
\end{equation}
The  term $D$ is made dependent on $a$ so that it is not fixed \textit{a priori} but it results from the optimization process of the parameter $a$.
The construction summarized by the equations \eqref{a1}, \eqref{a2}, \eqref{C}, \eqref{D} is still theoretical. In order to have it working, we need to find a parameter $a$ (and consequently the truncation parameter $c$ as defined in \eqref{c1}) giving us the actual rate of convergence of the quadrature rule. We notice that the profile of integration $z$ is given once $a$ is fixed by formulas \eqref{a1}, \eqref{a2}.
The tool we use to make the selection is the estimate \eqref{estimateErr}.
In order to simplify this estimate, we identify a leading term and neglect all the remaining ones, whose contribution is expected to be smaller. First of all, we notice that we expect $M_+<M_-$ (defined in \eqref{MP}, \eqref{MM}): this is due to the rapid decaying property of the exponential part in the integrand function and, in practice, it holds in most cases. Moreover, we assume $S_-\ll M_-$ (where $S_-$ is the one in \eqref{SM}), since the exponential $e^{zt}$ is larger for $z$ parametrized in $[-c\pi,c\pi]$ than the one in $[-\pi/2,-c\pi]\cup [c\pi,\pi/2]$. In the end, we neglect the contribution of the term of order $tol$ ($2\left(\frac{\pi}{2}-c\pi+\frac{2c\pi}{N}\right)tol$), and the one of the term $B$ \eqref{B} that is expected to be small (see Remark \ref{rmkB}). Finally we simplify the error estimate \eqref{estimateErr} to 
\begin{equation}\label{estimateErr2}
	err_N\approx\frac{2\pi cM_-}{e^{\frac{a}{c}N}-1}\,.
\end{equation} Recalling \eqref{MM}, we roughly estimate 
$M_-\approx e^{Dt}$.
Since we are interested in the order of magnitude of $M_-$ the estimate is enough precise for our purposes. In the end, within the accuracy, the quadrature error is 
\begin{equation}\label{errOpt}
	err_N \lesssim 2\pi c e^{Dt-\frac{a}{c}N}\,.
\end{equation}
Assuming we seek approximation of $u(t)$ with a prescribed precision $tol$, we impose \eqref{errOpt} to equal the precision. Solving for $N$, we compute
\begin{equation}\label{NN}
		N=\frac{c}{a}\left(Dt-\log\left(\frac{tol}{2\pi c}\right)\right)
\end{equation}as the (theoretical) minimum number of quadrature nodes letting us reach the fixed accuracy.
We aim to minimize $N$ in order to make the convergence as fast as possible. The minimization of the function in \eqref{NN} appears to be technically complicated since it depends on two variables ($a,c$) and the truncation parameter $c$ depends on the profile of integration by the nonlinear constraint \eqref{c1}. For this reason, recalling that $c\leq 1/2$, we estimate
\begin{equation}\label{NN2}
N=\frac{c}{a}\left(Dt-\log\left(\frac{tol}{2\pi c}\right)\right)\leq \frac{1}{2a}\left(Dt-\log\left(\frac{tol}{\pi}\right)\right)\,.
\end{equation}The best that we can do to minimize $N$ is to minimize the function
\begin{equation}\label{fToMin}
f(a)=\frac{1}{2a}\left(Dt-\log\left(\frac{tol}{\pi}\right)\right)\,.
\end{equation}We fix an upper bound $a_M$ and we seek minimizers of $f$ in the interval $[0,a_M]$. In the numerical experiments we use $a_M=1$. Once the minimizer value $a$ has been computed, it defines uniquely the profile of integration by formulas \eqref{gammaActual}, \eqref{a1}, \eqref{a2}, \eqref{C}. Recall that $D$ is a function of $a$ by formula \eqref{D}.

\subsection{Truncation error}\label{subTruncErr}
The profile \eqref{Gamma} that we use to apply the Bromwich inversion formula is never used in practice: we just use a ``small" portion of the ellipse contained in it. How much this portion is ``small" depends on the fixed precision $tol$. In practice, we disregard all the points of \eqref{Gamma} whose contribution is estimated to be smaller than $tol$. We resume the global error analysis in the following
\begin{theorem}\label{th:totalerr}
Consider the integration profile defined in \eqref{Gamma}, where the elliptical part $\Gamma$ is constructed as in Section~\ref{innerConstr}. We denote $\ell_1,\ell_2$ as in \eqref{Gamma}, the parametrization of the two half lines making part of ${\cal G}$, whose slope is assumed to be such that ${\cal G}$ encloses all the singularities of the integrand function. Moreover, assume that
\[
	\left\|\hat{u}(z(x))z'(x)\right\|\leq K_{\ell}\,,\quad \mbox{ for } z(x)=\ell_{1}(x) \ \mbox{ and }\ z(x)=\ell_{2}(x).
\]
In the end, we assume that all the hypotheses of Theorem \ref{th:errquad_ellipse} are satisfied. Then, the total error of our approximation of \eqref{bromwich} is given by
 \[
	\left|u(t)-I_N\right|\leq err_N+err_{T}
 \]
 where $err_N$ is bounded by \eqref{estimateErr} and the truncation error $err_T$ is bounded by
 \[
	\left|err_{T} \right| \le K_{\ell}\frac{e^{z_lt}}{\pi t} +\left(\frac{1}{2}-c\right)tol
 \]
and does not depend on $N$.
\end{theorem}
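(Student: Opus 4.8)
The plan is to isolate the two sources of error by inserting the exact integral over the truncated arc. Writing $I=\frac{1}{2\pi\opi}\int_{-c\pi}^{c\pi}G(x)\,dx$ for the contribution of the elliptic piece $z([-c\pi,c\pi])$, the triangle inequality gives
\[
\left|u(t)-I_N\right|\le \left|u(t)-I\right|+\left|I-I_N\right|.
\]
The second term is exactly the quadrature error $err_N$, already bounded by \eqref{estimateErr} under the standing hypotheses (which are assumed in the statement). Thus everything reduces to controlling the truncation error $err_T=u(t)-I$, i.e. the part of the Bromwich integral \eqref{bromwich} over $\mathcal G$ that is discarded when the full contour \eqref{Gamma} is replaced by the arc $z([-c\pi,c\pi])$.

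First I would split the discarded set into the two straight half-lines $\ell_1,\ell_2$ and the two elliptic ``shoulders'' $z([-\pi/2,-c\pi])$ and $z([c\pi,\pi/2])$, estimating each by the triangle inequality applied to the integrand $\frac{1}{2\pi\opi}e^{z(x)t}\hat u(z(x))z'(x)$ (so that taking moduli carries the constant $\frac{1}{2\pi}$). For the shoulders I would use the assumption $|G|\le tol$ there directly: the two arcs have total parameter length $2(\pi/2-c\pi)=\pi(1-2c)$, so their combined contribution is at most $\frac{1}{2\pi}\,\pi(1-2c)\,tol=(\tfrac12-c)\,tol$, which is the second term in the claimed bound.

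The half-lines are where the specific parametrization pays off. Along $\ell_1$ and $\ell_2$ the real part equals $A_3+(x+\tfrac\pi2)$ and $A_3-(x-\tfrac\pi2)$ respectively; hence, starting from the center $A_3=z_l$, it decreases to $-\infty$ with unit slope as one moves outward. Using $\|\hat u(z(x))z'(x)\|\le K_\ell$ and substituting for the signed decrease $s\ge 0$ of the real part, each half-line contributes at most $\frac{K_\ell}{2\pi}\int_0^{\infty}e^{(z_l-s)t}\,ds=\frac{K_\ell e^{z_l t}}{2\pi t}$, so the two together give $\frac{K_\ell e^{z_l t}}{\pi t}$, the first term. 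Adding the shoulder and half-line bounds yields the stated estimate for $err_T$, and since neither piece involves $N$, the asserted $N$-independence is immediate.

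The only genuinely delicate point is the half-line estimate: one must check that the parametrization in \eqref{Gamma} makes $\real(z(x))$ decay \emph{linearly with slope exactly one} away from $z_l$, since this is what turns each tail into a convergent exponential integral producing the clean factor $1/t$; the imaginary slope $d$ and the resulting constant $|z'|=\sqrt{1+d^2}$ are harmless, being absorbed into $K_\ell$. Everything else is a routine combination of the triangle inequality with the quadrature bound of Theorem~\ref{th:errquad_ellipse}.
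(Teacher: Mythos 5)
Your proposal is correct and follows essentially the same route as the paper: split off the quadrature error on $z([-c\pi,c\pi])$ via the triangle inequality, bound the two elliptic shoulders by $|G|\le tol$ times their parameter length, and bound each half-line by $\frac{K_\ell}{2\pi}\int_0^\infty e^{(z_l-s)t}\,ds$ using the unit-slope decay of $\real(z(x))$ built into \eqref{Gamma}. Your closing remark correctly identifies the one point the paper also emphasizes, namely that this estimate would fail for vertical half-lines.
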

\begin{proof}
Recall that we are approximating the following integral
\[
	\frac{1}{2\pi \opi}\int_{{\cal G}}e^{zt}\hat{u}(z)\,dz = \frac{1}{2\pi\opi}\int_{-\infty}^{\infty}e^{z(x)t}\hat{u}(z(x)) z'(x)\,dx,
\]
with $z(x)$ defined by \eqref{Gamma}.
First of all, we estimate the contribution of the two half-lines parameterized by the mappings $\ell_1$ and $\ell_2$. Choosing $\ell_1$ as in \eqref{Gamma}, we have
\begin{equation}\label{contrLines}
\left|\frac{1}{2\pi\opi}\int_{-\infty}^{-\frac{\pi}{2}}e^{\left(A_3+x+\frac{\pi}{2}-\opi\left(A_2-d\left(x+\frac{\pi}{2}\right)\right)\right)t}\hat{u}(z(x))z'(x)\,dx\right|\leq \frac{K_{\ell}}{2\pi}\int_{-\infty}^{0}e^{A_3t+x t}\,dx = K_{\ell}\frac{e^{z_lt}}{2 \pi t},
\end{equation}
with our choice $A_3=z_{l}$ in \eqref{eq1}. An analogous bound follows for $z(x)=\ell_2(x)$. We notice that the same estimate is not valid if we choose the two half-lines to be vertical.

Next we notice that the only portion of the ellipse that we approximate by using the quadrature formula \eqref{quadrature2} is the one parameterized on the interval $[-c\pi,c\pi]$. We estimate now the contribution of the other two intervals $[-\pi/2,-c\pi]$, $[c\pi,\pi/2]$. Recalling that we assume $|G(c\pi)|=tol$ and $|G(w)|\leq tol$ for all $w\in[-\pi/2,-c\pi]\cup[c\pi,\pi/2]$ as in Theorem \ref{th:errquad_ellipse}, we have
\[
	\left|\frac{1}{2\pi\opi}\int_{-\frac{\pi}{2}}^{-c\pi}e^{z(w)t}\hat{u}(z(w))z'(w)\,dw\right|\leq \frac{1}{2}\left(\frac{1}{2}-c\right)tol
\]and analogously for the integral on $[c\pi,\pi/2]$. In the end the error
\[
	\left|\frac{1}{2\pi\opi}\int_{-c\pi}^{c\pi}e^{z(w)t}\hat{u}(z(w))z'(w)\,dw-I_N\right|\,,
\]where $I_N$ is defined in \eqref{quadrature2}, is estimated by Theorem \ref{th:errquad_ellipse}. The thesis then follows. 
\end{proof}

\subsection{Stability of the method}\label{secStab}
One of the most attractive features of the method is its stability.  In particular, we are able to compute the stability constant of the method.

In practice, we approximate the exact solution $u(t)$ by the linear combination
\begin{equation}\label{IN}
	\tilde{I}_N=\frac{c}{N\opi}\sum_{j=1}^{N-1}e^{z(x_j)t}\hat{u}_j z'(x_j)
\end{equation}where
	$\hat{u}_j=\hat{u}(z(x_j))+\rho_j$
and $\rho_j$ is the error in the numerical solution of the linear system
\begin{equation}\label{systemLap}
	\left(z(x_j)I-A\right)\hat{u}=u_0+\hat{b}(z(x_j)),
\end{equation}for $x_j$ our quadrature nodes. We assume that the quadrature nodes $x_j$, the parametrization $z(x)$ and its derivative $z'(x)$ are computed exactly. We state the following result.
\begin{prop} 
	The described method is numerically stable and the stability constant is given by
	\begin{equation}\label{stabilityConst}
	 2a_2c\,e^{(a_1+a_2+z_l)t},
	\end{equation}
	with $a_1,a_2$ given by formulas \eqref{a1}, \eqref{a2}.
\end{prop}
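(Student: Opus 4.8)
The plan is to compare the computed quadrature sum $\tilde I_N$ in \eqref{IN} with the ideal sum $I_N$ obtained by replacing each $\hat u_j$ by the exact value $\hat u(z(x_j))$, and to bound the propagated error by the claimed constant times $\max_j\|\rho_j\|$. Subtracting the two sums, the exact contributions cancel and only the perturbations $\rho_j=\hat u_j-\hat u(z(x_j))$ survive, so that
\[
\tilde I_N-I_N=\frac{c}{N\opi}\sum_{j=1}^{N-1}e^{z(x_j)t}\,\rho_j\,z'(x_j).
\]
Taking norms, using $|1/\opi|=1$ and the triangle inequality, reduces everything to estimating $\frac{c}{N}\sum_{j=1}^{N-1}\bigl|e^{z(x_j)t}\bigr|\,|z'(x_j)|\,\|\rho_j\|$; hence it suffices to bound the two scalar factors $|e^{z(x_j)t}|$ and $|z'(x_j)|$ uniformly over the nodes $x_j\in(-c\pi,c\pi)\subset(-\pi/2,\pi/2)$.

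For the exponential factor I would use that, with $A_3=z_l\in\bR$ and $a_1,a_2\in\bR$, the real part of the parametrization \eqref{gammaActual} is $\real(z(x))=(a_1+a_2)\cos x+z_l$, which over $[-c\pi,c\pi]$ is maximal at the interior point $x=0$; hence $|e^{z(x_j)t}|=e^{\real(z(x_j))t}\le e^{(a_1+a_2+z_l)t}$. For the derivative factor, differentiating \eqref{gammaActual} gives $z'(x)=-(a_1+a_2)\sin x+\opi(a_2-a_1)\cos x$, so
\[
|z'(x)|^2=(a_1+a_2)^2\sin^2 x+(a_2-a_1)^2\cos^2 x\le(a_1+a_2)^2,
\]
the last step following from $4a_1a_2>0$. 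The structural fact I would invoke here is that, from \eqref{a1}--\eqref{a2} and $a>0$, one has $0<a_1<a_2$ (the factor $e^{a}$ against $e^{-a}$ together with $r/\sin\tilde w>0$, exactly the ratio $a_2/a_1=Ce^{2a}>1$ computed in the proof of Theorem~\ref{th:errquad_ellipse}); this yields $a_1+a_2\le 2a_2$ and therefore $|z'(x_j)|\le 2a_2$.

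Combining the two bounds, and noting that the sum has $N-1$ terms so that $\frac{c}{N}(N-1)\le c$, gives
\[
\|\tilde I_N-I_N\|\le 2a_2c\,e^{(a_1+a_2+z_l)t}\,\max_{1\le j\le N-1}\|\rho_j\|,
\]
which identifies $2a_2c\,e^{(a_1+a_2+z_l)t}$ as the stability constant and shows it does not grow with $N$. The only genuine subtlety is justifying the two uniform estimates on the correct domain: that $\real(z(x))$ attains its maximum at $x=0$ and that the positivity $0<a_1<a_2$ holds. Both rest on the assumption, already used in Theorem~\ref{th:errquad_ellipse}, that the integration ellipse has real foci, i.e. that its horizontal semi-axis $a_1+a_2$ exceeds the vertical one $a_2-a_1$; replacing the sharp factor $a_1+a_2$ by $2a_2$ is the mild loosening that produces the clean closed form stated in the proposition.
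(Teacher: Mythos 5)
Your proposal is correct and follows essentially the same route as the paper: isolate the perturbation term $\frac{c}{N\opi}\sum_j e^{z(x_j)t}\rho_j z'(x_j)$, bound the exponential by its value at $x=0$ (giving $e^{(a_1+a_2+z_l)t}$), and bound $|z'(x_j)|$ by $2a_2$ (the paper uses the triangle inequality $|A_1|+|A_2|=2a_2$ where you compute the exact modulus and then loosen via $a_1\le a_2$, an immaterial difference). The justification of $0<a_1<a_2$ from the real-foci assumption matches the argument already given in the proof of Theorem~\ref{th:errquad_ellipse}.
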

\begin{proof}
 The actual error in our computation is given by
\[
	\tilde{err}_N=\left|u(t)-\tilde{I}_N\right|\,.
\]
We can estimate it in the following way
\[
	\tilde{err}_N=\left|u(t)-\frac{c}{N\opi}\sum_{j=1}^{N-1}e^{z(x_j)t}\hat{u}_jz'(x_j)\right|\leq  err_N+err_{T}+err_N^{num}
\]
with $err_N$ and $err_{T}$ as in Theorem \ref{th:totalerr} and
\[
err_N^{num} = \left|\frac{c}{N\opi}\sum_{j=1}^{N-1}e^{z(x_j)t}\left(\hat{u}(z(x_j))-\hat{u}_j\right)z'(x_j)\right|.
\]
Recalling that the integration contour $\Gamma$ is parameterized as $z(x)=A_1\cos x+\opi A_2\sin x+A_3$, with $A_1=a_1+a_2\,,A_2=a_2-a_1$ positive and $A_3=z_l$ we have
\[
	err^{num}_N\leq\frac{c}{N}\sum_{j=1}^{N-1}e^{\left(A_1\cos x_j+A_3\right)t}|\rho_j|\left|-A_1\sin x_j+A_2i\cos x_j\right|\leq
\]
\[
	\leq \frac{c}{N}\left(|A_1|+|A_2|\right)e^{(A_1+A_3)t}\rho N=c\left(A_1+A_2\right)e^{(A_1+A_3)t}\rho
\]where $\rho=\max_{j}|\rho_j|$.
\end{proof}

\begin{rmk}\end{rmk}Given an integration contour $\Gamma$, depending on the working precision, it is possible to compute the maximal precision we can get along $\Gamma$. First we compute the condition number of the matrix $z(x_j)I-A$, for a set of $x_j\in[-c\pi,c\pi]$. Then we use this information to estimate the numerical error $\rho$ introduced when solving \eqref{systemLap}. At this point the computation of \eqref{stabilityConst} is straightforward. We can incorporate this feasibility check in our algorithm, asking the user, in case the required precision is too high, to adjust the parameter $tol$ by choosing a larger value.

\subsection{The approximation of the integrand tail}\label{selConst}
Once the integration contour is computed, we need to approximate the truncation parameter $c$ as defined in \eqref{c1}. Let us recast \eqref{c1} into the system
\begin{eqnarray}\label{firstEquation}
Ke^{\real(z(c\pi))t}=tol;\\\label{secondEquation}
K=\frac{1}{2\pi}\left\|\hat{u}(z(c\pi))z'(c\pi)\right\|.\\\nonumber
\end{eqnarray}
Observe that, once the profile of integration is fixed, considering its parametrization $z(x)=A_1\cos x+\opi A_2\sin x+A_3$, from \eqref{firstEquation} one has
\begin{equation}\label{c}
c=\frac{1}{\pi}\arccos\left(\frac{1}{A_1t}\log\left(\frac{tol}{K}\right)-\frac{A_3}{A_1}\right)\,.
\end{equation}
We suggest Algorithm \ref{algK} to compute iteratively $c,K$ (where $prec$ is the precision we ask for the constant $K$).

\begin{algorithm}[th]
\DontPrintSemicolon
	\KwData{$K^{(1)}$ given, $K^{(0)}=K^{(1)}-2prec$, $j=0$} 
	\While{$\left|K^{(j+1)}-K^{(j)}\right|\geq prec$}{
	$c^{(j)}=\frac{1}{\pi}\arccos\left(\frac{1}{A_1t}\log\left(\frac{tol}{K^{(j)}}\right)-\frac{A_3}{A_1}\right)\,;$\
	
	$K^{(j+1)}=\frac{1}{2\pi}\left\|\hat{u}\left(z(c^{(j)}\pi)\right)z'(c^{(j)}\pi)\right\|\,;$\
	
	$j=j+1\,;$\
	}
	\caption{Numerical algorithm for approximating $c,K$.}\label{algK}
\end{algorithm}
\smallskip

In all the numerical tests we observe that Algorithm \ref{algK} converges quickly to the sought values of $c,K$. We fix $prec=10^{-1}$ and in at most $4-5$ iterations Algorithm \ref{algK} approximates the constant $K$ with a precision approximately equal to $10^{-3}$.
\begin{rmk}\label{rmkK}
  The sequence $(c^{(j)},K^{(j)})$ resulting from Algorithm \ref{algK} can be seen as a fixed point iteration method.
	Concerning the convergence of Algorithm \ref{algK}, assume that a pair $(c,K)$ exists such that
	\begin{equation}\label{iterativeThesis}
		\begin{array}{l}
		K e^{\real\left(z(c\pi)\right)t}=tol\\[2mm]
		K=\frac{1}{2\pi}\left\|\hat{u}\left(z(c\pi)\right)z'(c\pi)\right\|\,\\
		\end{array}
	\end{equation}where \begin{equation}\label{fIter}
	F(x):=\frac{e^{A_3t}tol^{-1}}{2\pi}\left\|\hat{u}(z(x))z'(x)\right\|
	\end{equation}
	 Assume further that $F$ \eqref{fIter} is differentiable and there exists a constant $0<\mu<1$ such that
\begin{eqnarray}\label{assumptK1}
			\left|F'(x)\right|\leq  \mu A_1te^{-A_1t\sqrt{1-\mu^2}} \quad \forall x\in I;\\\label{assumptK2}
			F(x)\in\left[e^{-A_1t\sqrt{1-\mu^2}},e^{A_1t\sqrt{1-\mu^2}}\right]\quad \forall x\in I\\\nonumber
	\end{eqnarray}being $I$ an interval containing $c\pi$, being $(c,K)$ solution to system \eqref{firstEquation}, \eqref{secondEquation} and all the points $c^{(j)}\pi$, $j\geq1$, with $c^{(j)}$ as defined in Algorithm \ref{algK}. Then both the sequences $\left\{K^{(j)}\right\}$, $\left\{c^{(j)}\right\}$ converge and in particular
	\[
		\lim_{j\rightarrow+\infty}K^{(j)}= K\,,	\qquad
		\lim_{j\rightarrow+\infty}c^{(j)}=c\,.	
	\]
	
\end{rmk}
\section{Practical implementation}\label{practImpl}
\subsection{Construction of the inner ellipse $\Gamma_+$}\label{constrEll}
In the described method, it is crucial to construct properly the inner ellipse $\Gamma_+$. It is conceived to bound the resolvent norm $\left\|\left(zI-A\right)^{-1}\right\|$ and it must be adjusted in order to leave to its left all the possible singularities of the vector $\hat{b}$. Let us focus, for the moment, on the resolvent norm: $\Gamma_+$ should be far enough from the spectrum of $A$ but, if it's too far, it will produce a slowdown of the rate of convergence of the numerical scheme. For this reason, we decided to construct the ellipse by a procedure based on the computation of some pseudospectral level curve associated to the matrix $A$. In particular, we approximate these curves using \texttt{eigtool} \cite{eigtool}. If some theoretical information about these curves is already known, we can skip the computation by \texttt{eigtool} and directly construct $\Gamma_+$ from it. First of all, we define the region were we need to ``place" our ellipse. This region is defined as $\left\{z\in\mathbb{C}\,|\,z_l\leq\real(z)\leq z_r\right\}$, where $z_l,z_r$ are real parameters to be chosen. This choice is partially heuristic and we ask the user to make a selection of these two values. We suggest the following criteria:
\begin{itemize}\label{choicez}
\item[(i)] choice of $z_l$: once the time $t$ is fixed, we need to have $e^{z_lt}$ smaller than the working precision. This is due to the fact that we use $z_l$ as the center of the ellipse and, having in mind formula \eqref{contrLines}, we need to be sure that the contribution of the two lines of the profile \eqref{Gamma} is negligible. In our experiments we choose $z_l\approx \frac{1}{t}\log(10^{-18})$.
\item[(ii)] choice of $z_r$: this point is going to be the right intersection of our half ellipse with the real axis. We can choose $z_r$ as the rightmost intersection of the pseudospectral boundary $\partial\sigma_{\varepsilon}(A)$ with the real axis taking, for example, $\varepsilon=10^{-9}$. In case $\hat{b}$ has some singularity to the right of this point, we need to move $z_r$ as to be sure that all these singularities are to the left of the half ellipse. In our examples $\hat{b}$ has a singularity in the origin. For this reason we take $z_r$ as a small positive number. Several values of $z_r$ are tested in the numerical examples.
\end{itemize}
Now we need to estimate the resolvent norm in the strip $z_l<\real(z)<z_r$. We do this by computing some suitable pseudospectral level curve of the matrix $A$. We recall that, given $\varepsilon>0$, the $\varepsilon-$pseudospectral level curve of $A$ is defined as
\begin{equation}\label{weightLvStPs1}
\sigma_{\veps}(A)=\left\{z\in\mathbb{C}\,:\, \left\|\left(zI-A\right)^{-1}\right\|=\frac{1}{\varepsilon}\right\}\,.
\end{equation}
Moreover, we call \textit{weighted} $\varepsilon-$pseudospectral level curve the set of points
\begin{equation}\label{weightLvStPs}
\sigma_{\veps,\omega}(A):=\left\{z\in\mathbb{C}\,:\, e^{\real(z)t}\left\|\left(zI-A\right)^{-1}\right\|=\frac{1}{\varepsilon}\right\}\,.
\end{equation}
Let us fix two positive values $\varepsilon_1,\varepsilon_2$ (we use $\varepsilon_1^{-1}, \varepsilon_2^{-1}$ as bounding constants). We define ${\cal C}_1$ as the weighted $\varepsilon_1-$pseudospectral level curve and ${\cal C}_2$ the $\varepsilon_2-$pseudospectral level curve. We compute ${\cal C}_1$, ${\cal C}_2$ in the strip $z_l<	\real(z)<z_r$. We notice that both ${\cal C}_1$, ${\cal C}_2$ are symmetric w.r.t. the real axis (and then we can just compute the two curves in the upper complex plane). For using our algorithm, we need the two level curves to be defined in the whole strip $\left\{z\in\mathbb{C}\,:\,z_l\leq\real(z)\leq z_r\right\}$. For this reason, if it happens that for some $x\in[z_l,z_r]$ there is no $y\in\mathbb{R}$ such that $x+\opi y$ is on the curve, we set $y=0$ and we add the point $x$ to the curve. After that, we define
\begin{equation}\label{critical}
{\cal C}=\left\{x+iy\in\mathbb{C}\,|\, x+\opi y_1\in{\cal C}_1\,, x+\opi y_2\in{\cal C}_2\mbox{ and } |y|=\max\left(|y_1|,|y_2|\right)\right\}\,.
\end{equation}
We call ${\cal C}$ \textit{critical curve}. The meaning of the computation of ${\cal C}$ is the following: we approximate ${\cal C}_1$ in order to have a bound of $\left\|e^{zt}\left(zI-A\right)^{-1}\right\|$ and then of the integrand function \eqref{integrandFunction}, while on ${\cal C}_2$ we require that the norm of the resolvent is not too high in order to ensure that the condition number of the system \eqref{algEqLap} on the points of the curve $\Gamma_+$ (and then of the ellipse $\Gamma$) is not too big to cause a loss of accuracy. Defining ${\cal C}$ as in \eqref{critical} will ensure that both the conditions are satisfied.
From a practical point of view, we computed the value of the resolvent norm on a grid in the strip $z_l<\real(z)<z_r$ using the Matlab code \texttt{eigtool}, \cite{eigtool}. It is straightforward to compute an approximation of ${\cal C}_1$, ${\cal C}_2$, and then ${\cal C}$ from it. In particular we end up with a set of points
$
P_{\mathcal{C}}:=	\left\{x_i+\opi y_i\,|\, y_i\geq 0\,,i=1\ldots,m_{\cal C}\right\}
$
approximating the upper part of the critical curve ${\cal C}$. The ellipse $\Gamma_+$ should be chosen in order to approximate in the sharpest way the curve ${\cal C}$. Moreover, since it may happen that the grid chosen by \texttt{eigtool} is too coarse, the critical curve may not capture some of the eigenvalues of $A$ (in the case where the pseudospectral level curve near those ones are very small closed curves surrounding the eigenvalues). In addition, we have to consider also the possible singularities of $\hat{b}$ (that we assume to form a finite set). We call $\lambda_k+\opi \eta_k$, $k=1,\ldots,m_A$, the eigenvalues of $A$ and $s_j+\opi r_j$, $j=1,\ldots,m_b$, the singularities of $\hat{b}$. Now define
\[
\Phi=\left\{\rho_n+\opi\sigma_n\,|\,n=1\ldots m\right\}
	=P_{\mathcal{C}}\cup\sigma(A)\cup\left\{s_j+\opi r_j\,|\, j=1,\ldots,m_b\right\}.
\]
The ellipse $\Gamma_+$ has to be chosen in order to enclose all the points of $\Phi$. In this way, the two conditions (3), (4) of Assumption \ref{assumptionG} will be satisfied with $W_+=\varepsilon_1^{-1}$ and $R_+=\varepsilon_2^{-1}$.
We recall that, the ellipse $\Gamma_+$ is completely defined by its center (that we choose as $z_l$), and two other points: the point $z_r$ and any other point $d+\opi r\in\Gamma_+$, that we used in the equations \eqref{eq1}, \eqref{eq4}. We can always take $r>0$ since the curve is symmetric w.r.t. the real axis.
The basic idea of the construction of $\Gamma_+$ is to start with a large ellipse (a circle) and then shrink it as much as possible so that all the points of $\Phi$ lay below the upper arc of $\Gamma_+$. The construction we suggest is the following:
\begin{itemize}
\item[(i) ] We start considering a circle centered at $z_l$ with radius $z_r-z_l$.
\item[(ii) ] If all the points of $\Phi$ are inside the circle, we try to shrink the curve. To do this, we consider a partition of the interval $[z_l,z_r]$, say $\left\{x_{r}\right\}_{r=1,\ldots,m_{\ell}}$ with $x_1=z_l$, $x_{m_{\ell}}=z_r$. We consider the ellipse $\gamma$ centered at $z_l$, passing through the point $z_r$ and having right focus at $x_2$. Then, we check, from the right to the left if all the points of $\Phi$ are enclosed by $\gamma$. If there is a point $\rho_j+\opi\sigma_j$ that is outside $\gamma$, we go back to the circle of the previous case and we choose it as $\Gamma_+$. In particular we set $d=\rho_j$ and $r$ as the corresponding imaginary part on the circle.
In our experiments we set $m_{\ell}=1000$.
\item[(iii) ] If, on the contrary, all the points of $\Phi$ are inside $\gamma$, we go on choosing a more elliptical curve having right focus at $x_3$ and then we check the position of $\Phi$ w.r.t. the new $\gamma$. We stop when the curve $\gamma$ excepts some points of $\Phi$, choosing the previously computed $\gamma$ as $\Gamma_+$ and the point $d+\opi r$ on it as we did in the previous case.
\item[(iv) ] If the last computed ellipse $\gamma$, the one with right focus at $x_{m_{\ell}-1}$ is still large enough to enclose $\Phi$, we set $d+\opi r$ as the point on $\gamma$ corresponding to $d=z_l$.
\item[(v) ] If in the very first point the circle already excepts some points of $\Phi$, we choose among them the point with higher (in modulus), imaginary part, say it is the point $\rho_j+\opi \sigma_j$ and we set $d+\opi r=\rho_j+\opi(\sigma_j+\varepsilon)$ for a small $\varepsilon>0$. The curve $\Gamma_+$ is the one centered at $z_l$, and passing through $d+\opi y$. We can adjust the value of $\varepsilon$ in order to ensure that $\Phi$ is all inside $\Gamma_+$.
\end{itemize}
\begin{figure}[h!]
\begin{center}
\includegraphics[scale=0.35]{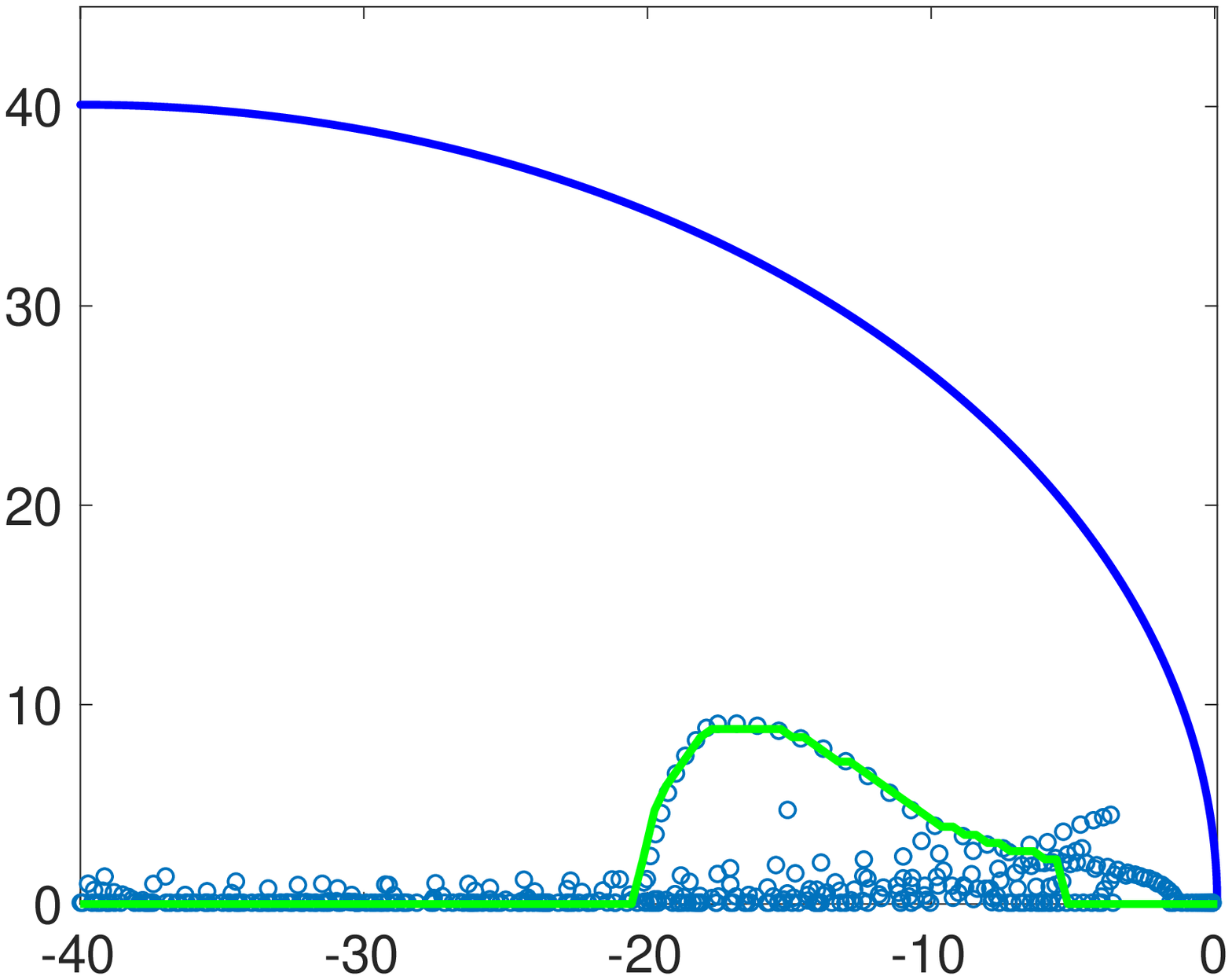}\includegraphics[scale=0.35]{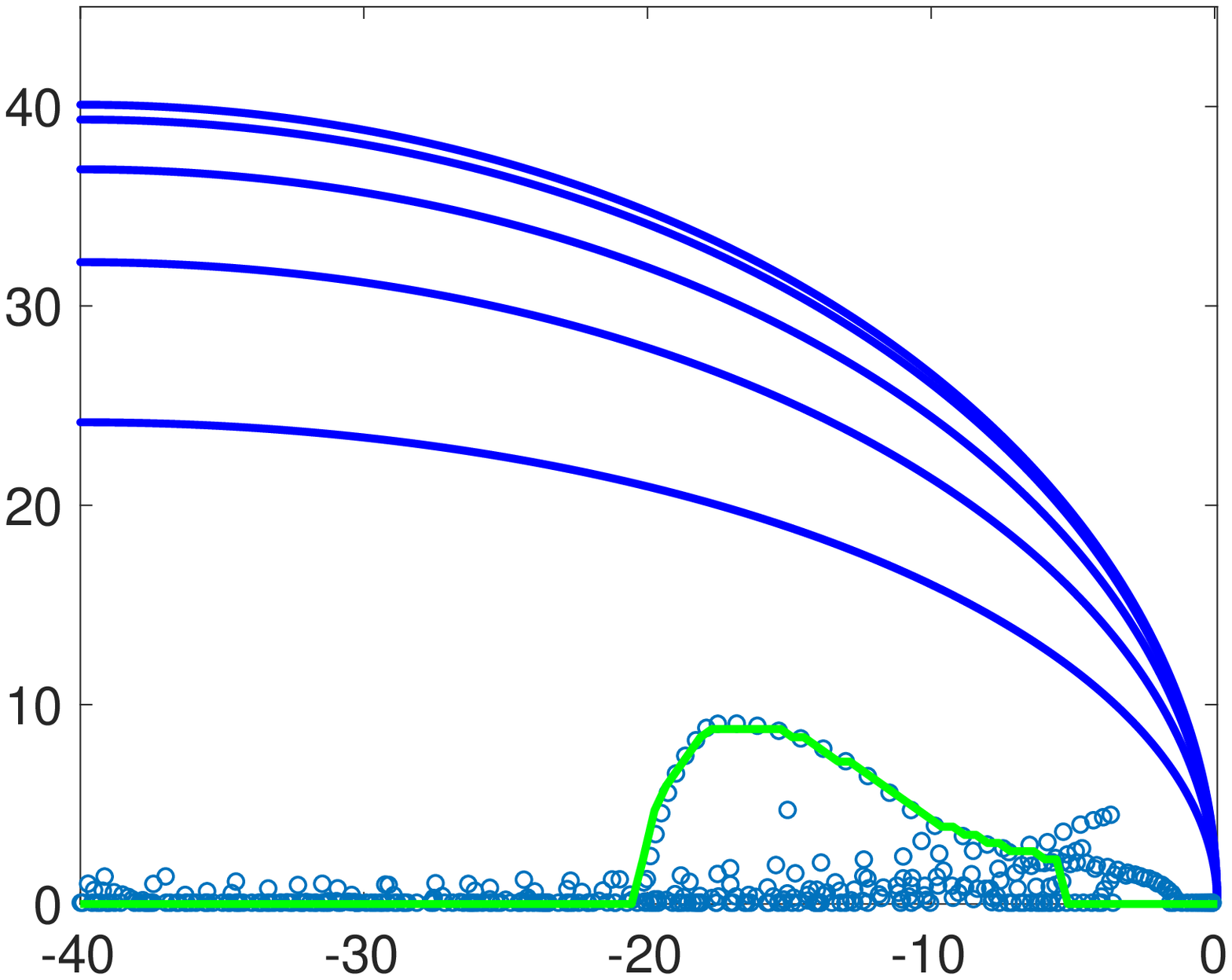}\\
\includegraphics[scale=0.35]{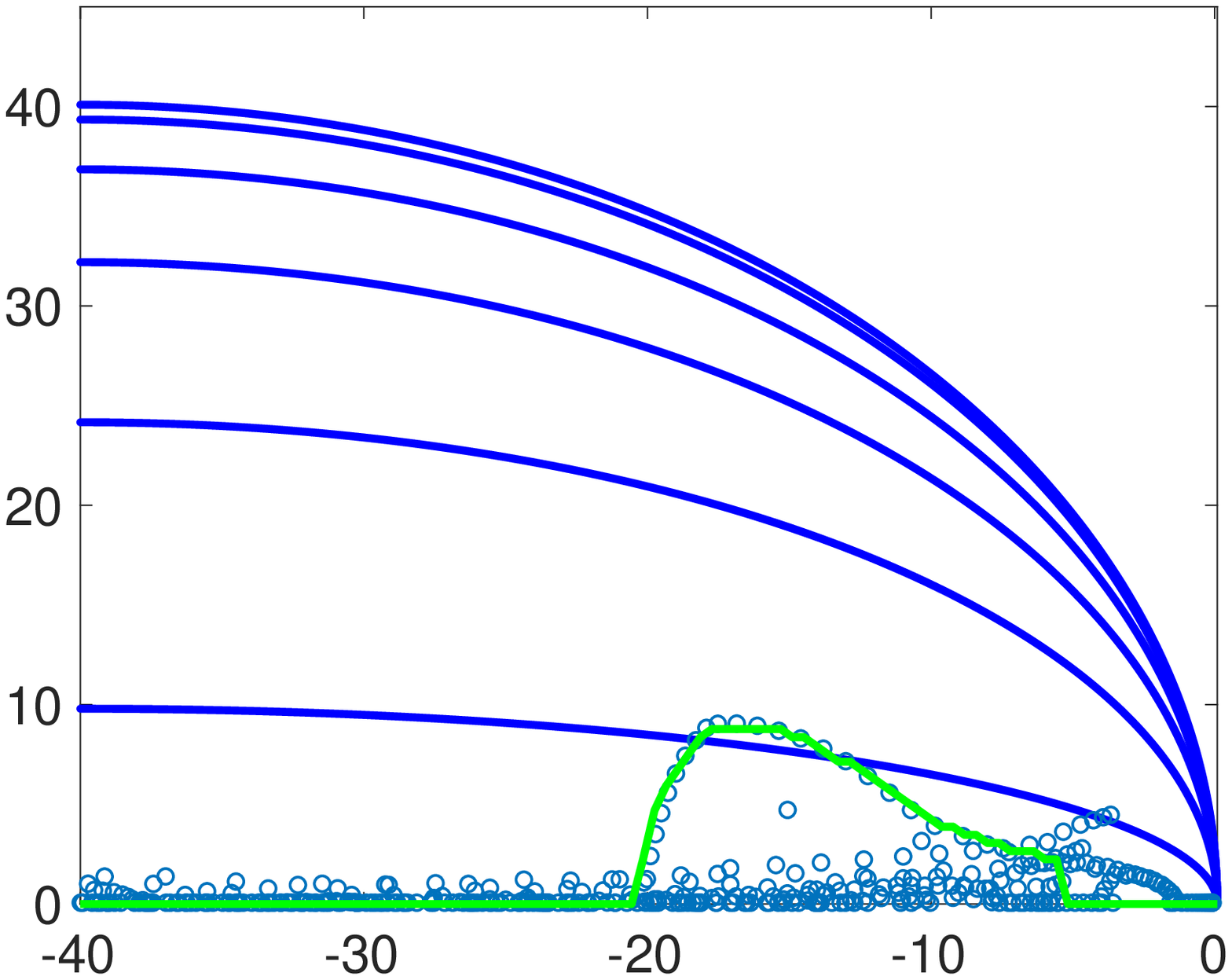}\includegraphics[scale=0.35]{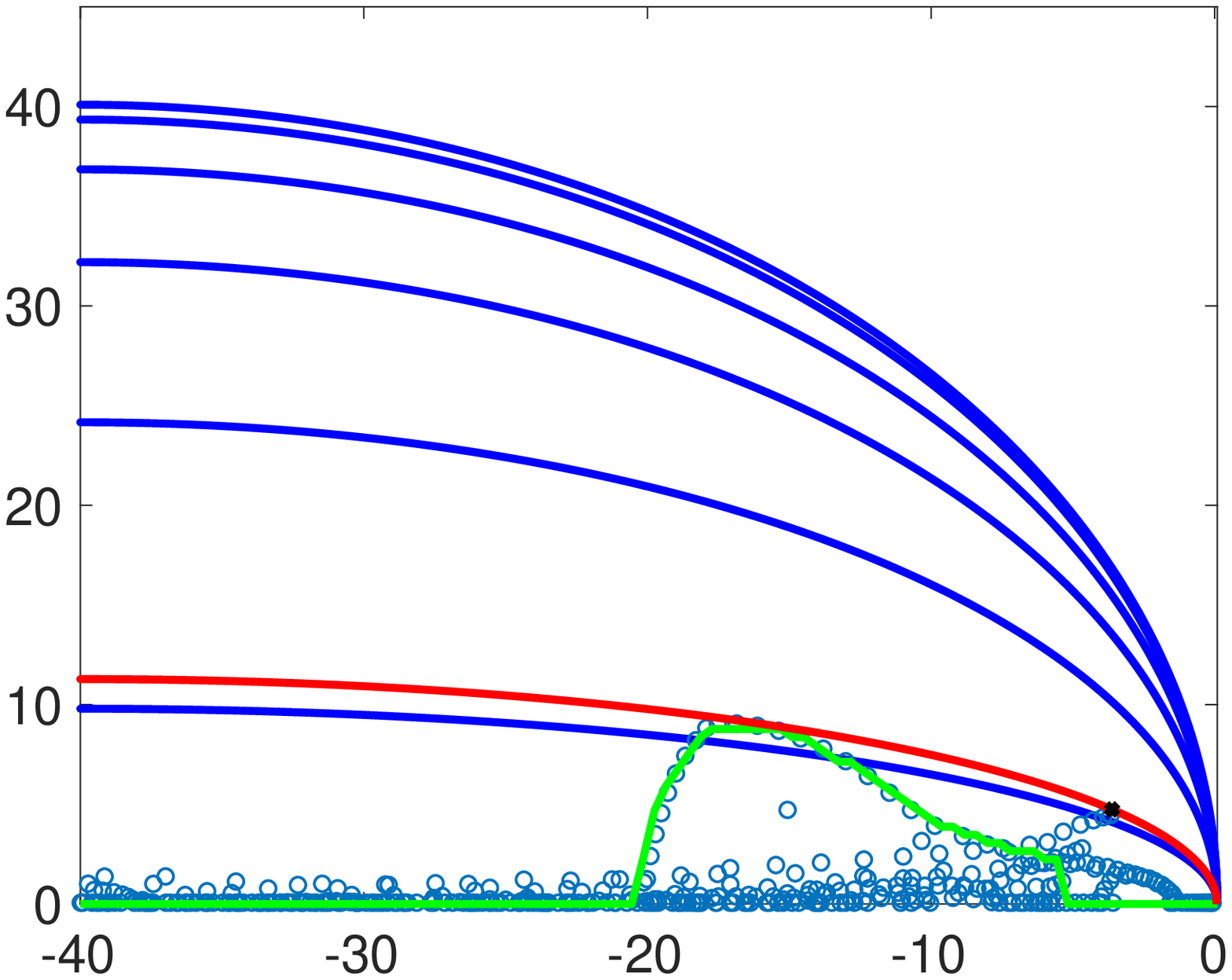}
\caption{Algorithm behaviour when applied to a matrix arising in the discretization of Heston operator.}\label{plotAlgo}
\end{center}
\end{figure}
In Figure \ref{plotAlgo}, we show how the algorithm works on a particular matrix $A$ coming from the spatial discretization of the Heston operator
(that we will use in Section \ref{sec:num}). The green line is a 100 points approximation of the critical curve ${\cal C}$ in the region defined by $z_l=-40\,,z_r=0.09$, while the blue dots are the eigenvalues of the matrix $A$ in the same strip of the complex plane. \textit{Top left}: the blue line is the first guess for the ellipse $\Gamma_+$ (a circle). \textit{Top right}: the guesses for $i=1,20,40,60,80$ are plotted in blue (where each ellipse has right focus $x_i$, a point of the partition of the interval $[-40,0.09]$). \textit{Bottom left}: in addition to the previous guesses, also the first ellipse excepting some of the eigenvalues of $A$ and crossing the curve ${\cal C}$ is plotted (it corresponds to the index $i=97$). \textit{Bottom right}: $\Gamma_+$ is drown in red (and it is chosen as the ellipse corresponding to $i=96$) and the point $d+\opi r$ is marked with a black cross\\

\subsection{Computational cost}\label{compCost}
The method consists in two main parts: the \textit{precomputing} part, where we construct the ellipse $\Gamma_+$, see Section \ref{constrEll}, and compute the actual ellipse of integration $\Gamma$ (Section \ref{ChTarget}) and the \textit{computing} part, where the quadrature formula \eqref{quadrature2} is applied to get the approximation of the unknown function $u(t)$. The precomputing part is essential in order to make the algorithm \textit{general}, since by means of it all the pseudospectral information about the discrete operator $A$ is approximated and, then, used to set the optimal profile of integration. In particular, in this part the software \texttt{eigtool} is employed. This software allows to compute an approximation of the resolvent norm at the points of a rectangular grid in the complex plane. As a byproduct, \texttt{eigtool} also computes the eigenvalues of $A$. In the Table \ref{execEig}, we report the time employed by \texttt{eigtool} in computing the approximations for both Black-Scholes and Heston operator (the software is run on a laptop with a 2,4 GHz Intel Core i5 processor, using \texttt{Matlab R2016b}). In each case, we report the region of the complex plane (\textit{Box}) where we seek the approximation. For Black-Scholes (size $200\times200$), the number of points defining the rectangular grid is set to $200$, while it is set to $50$ in the case of Heston equation (size $2500\times2500$).
\begin{table}
	\begin{tabular}{|c|c|}\hline
	Black-Scholes & Heston\\\hline
	\begin{tabular}{c|c}
	$t=1$&\\
	Box $=[-40,0.05]\times[-10,10]$& $15.251361$ sec\\\hline
	$t=10$ &\\
	Box $=[-4,0.01]\times[-6,6]$ &$15.523646$ sec\\
	\end{tabular} &\begin{tabular}{c|c}
	$t=1$ &\\
	Box $=[-40,0.09]\times[-10,10] $& $53.806976$ sec\\\hline
	$t=10$&\\
	Box $=[-4,0.06]\times[-6,6] $& $41.270690$ sec\\
	\end{tabular}\\\hline
	\end{tabular}
	\caption{Execution time for \texttt{eigtool}}\label{execEig}
\end{table}The computation of the pseudospectral information is used to compute the critical curve ${\cal C}$ \eqref{critical}. The ellipse $\Gamma_+$ is selected by the algorithm explained in Section \ref{constrEll}. We remark that the use of \texttt{eigtool} produces a larger computational cost w.r.t. other methods \cite{ITHWeid,LP,LPS,W}. However, we remark that these methods can be efficiently applied only if some pseudospectral information on $A$ is already known. The choice of an algorithmic approach to investigate the behaviour of $\left\|\left(zI-A\right)^{-1}\right\|$ makes our method ready to be used for every system of ODEs of the form \eqref{mainpb}. In the case $A$ is normal or some curve ${\cal C}$ bounding the region where the resolvent norm of $A$ grows unboundedly is already known, the use of \texttt{eigtool} can be avoided.\\
For the computation of the truncation parameter $c$, we apply Algorithm \ref{algK}. Every iteration of this procedure requires the evaluation of the function
\[
	\left\|\left(zI-A\right)^{-1}\left(u_0+\hat{b}(z)\right)z'\right\|\,.
\]The cost of this evaluation depends on the size of $A$. We observe that for every iteration of Algorithm \ref{algK} the linear system
\begin{equation}\label{systComp}
\left(\zeta I-A\right)\hat{u}(\zeta)=u_0+\hat{b}(\zeta)
\end{equation}is solved for $\zeta=z(c\pi)$. 

 Concerning the integration part, i.e., the computation of the quadrature sum \eqref{quadrature2}, the execution time depends on the size of the matrix $A$ and the number of quadrature nodes that are used to get the desired accuracy. In order to save time, it is possible to precompute in parallel the integrand function at the quadrature nodes. Assuming the critical curve ${\cal C}$ already computed by \texttt{eigtool}, in Tables \ref{execTot1}, \ref{execTot2} we report the time needed to select the profile $\Gamma_+$, to approximate the truncation parameter $c$ by Algorithm \ref{algK} of Section \ref{innerConstr} and to apply the quadrature sum \eqref{quadrature2} for $n=30$ nodes. In all cases the resolution of the system \eqref{systComp} is done using the backslash command of Matlab. We also report the total number of solved linear systems.
\begin{table}
\begin{center}
	\begin{tabular}{|c|c|c|c|c|}\hline
		& Select. $\Gamma_+$ & Algorithm \ref{algK} & Numerical quadrature& \# Solved linear systems\\\hline
		$t=1, tol=5e-6$ &&$0.0498 \ sec$&\\
		$z_l=-40,z_r=0.05$ &$0.6850 \ sec $& $4$ iterations&$0.1254 \ sec$ &$34$\\\hline
		$t=10,tol=5e-6$ &&$0.0263 \ sec $&\\
		$z_l=-4,z_r=0.01$ &$0.3203 \ sec$ & $4$ iterations&$0.1048 \ sec$ &$34$\\\hline
	\end{tabular}
	\caption{Execution time for Black--Scholes.}\label{execTot1}
	\end{center}
\end{table}
\begin{table}
\begin{center}
	\begin{tabular}{|c|c|c|c|c|}\hline
		&Select. $\Gamma_+$ & Algorithm \ref{algK} & Numerical quadrature&\# Solved linear systems\\\hline
		$t=1, tol=5e-6$ &&$4.1404 \ sec$&\\
		$z_l=-40,z_r=0.09$ &$0.5360 \ sec $& $4$ iterations&$15.7459 \ sec$ &$34$\\\hline
		$t=10,tol=5e-6$ &&$5.0225 \ sec $&\\
		$z_l=-4,z_r=0.06$ &$0.1315 \ sec$ & $4$ iterations&$17.6780 \ sec$ &$34$\\\hline
	\end{tabular}
	\caption{Execution time for Heston.}\label{execTot2}
	\end{center}
\end{table}\\
We notice that it is possible to save the computation done for $n$ when approximating the solution on $2n$ nodes (taking an extra node between each pair of consecutive points). In this way the execution time is halved doubling the number of nodes.

An extra computational cost is needed for the \textit{feasibility check}: it is possible, indeed, to approximate the stability constant of the method by computing the condition number of the system \eqref{systemLap} for a set of points on the integration ellipse $\Gamma$, as explained in Section \ref{secStab}. However, this feasibility check can be skipped and it is not necessary in order to run the algorithm even if it is useful to make a forecast on the maximal precision attainable on the computed $\Gamma$ and to check whether the chosen tolerance is too sharp w.r.t. the working precision.
It is worth noticing that the execution time needed to run \texttt{eigtool} strongly depends on the fixed number $N$ of grid points. How to set this number? Numerical experiments suggest that we need a very low resolution of the computed pseudospectral level curves and the algorithm is robust w.r.t. the choice of $N$. For example, for the Black-Scholes case, with $t=1$, $tol=5\cdot 10^{-6}$, we seek approximations of the resolvent norm in the box $[-40,0.05]\times[-10,10]$. Then, we compute a reference solution corresponding to $N=1000$ and we measure the error w.r.t. this approximation of the solutions computed using $N=500,250,100,50,25$. We also report the time employed by \texttt{eigtool} to perform the approximation. The results are listed in Table \ref{tabRob}.
\begin{table}
\[
	\begin{array}{|c|c|c|}\hline
	\mbox{Points} &\mbox{Execution time}& \mbox{Error w.r.t. reference solution}\\\hline
	1000 & 247.435454 \ sec & 0\\\hline
	500 & 60.914193 \ sec& 1.3927e-12\\\hline
	250 & 18.149750 \ sec& 0\\\hline
	100 & 4.917106 \ sec & 0\\\hline
	50 & 2.684333 \ sec & 9.7751e-07\\\hline
	25 &  2.675066 \ sec & 9.6867e-07\\\hline
	\end{array}
\]
\caption{}\label{tabRob}
\end{table}

\subsection{Summary of the method}\label{subsec:outline}
The main parts of our method are: construction of the inner ellipse $\Gamma_+$, selection of the optimal integration contour $\Gamma$, truncation of the profile $\Gamma$. In the end, the trapezoidal quadrature rule is applied to the selected portion of ellipse to get the sought approximation of the solution to \eqref{mainpb}. For the sake of clarity, we briefly list the sequential steps which are needed to implement the method:
\begin{enumerate}\label{steps}
\item Given $A$, $b$ and $u_0$ in \eqref{mainpb}, a time $t$ and a precision $tol$, the method provides an approximation of the unknown solution $u(t)$ of \eqref{mainpb} with accuracy $tol$;\\
\item Computation of the inner ellipse $\Gamma_+$. As explained in Section \ref{constrEll}, the user is asked to choose the values of $z_l,z_r$ (respectively the center of $\Gamma_+$ and its right intersection with the real axis). This choice is partially heuristic but it is guided by (i), (ii) on Page~\pageref{choicez}. The procedure of Section \ref{constrEll} returns a point $d+\opi r$ which defines uniquely the profile $\Gamma_+$, together with $z_l,z_r$. The construction uses \texttt{eigtool} for the computation of the pseudospectral sets $\sigma_{\varepsilon_2}(A), \sigma_{\varepsilon_1,\omega}(A)$ (as defined in \eqref{weightLvStPs1}, \eqref{weightLvStPs}). In all the numerical experiments we found that the choice $\varepsilon_1=10^{-9}$, $\varepsilon_2=10^{-13}$ was effective.\\
\item Computation of the integration profile as explained in Section \ref{innerConstr}. In particular, the ellipse of integration is parameterized as
\[
	\Gamma: \quad z(x)=(a_1+a_2)\cos x+\opi(a_2-a_1)\sin x+A_3,
\]with coefficients $a_1,a_2,A_3$ depending on just one free parameter $a$ by formulas \eqref{a1}, \eqref{a2}, \eqref{C}. In order to find the optimal profile of integration we minimize the scalar function of $a$ in \eqref{fToMin};\\
\item Truncation of the Bromwich integral: since we are interested in approximating $u(t)$ within precision $tol$, we only consider the portion of the Bromwich integral parameterized in $[-c\pi,c\pi]$, for a certain truncation parameter $c$. The computation of $c$ is performed by Algorithm \ref{algK} as explained in Section \ref{innerConstr};\\
\item Apply quadrature formula \eqref{quadrature2} to get the sought approximation of $u(t)$.\\
\end{enumerate}

\section{Numerical results}\label{sec:num}
In this section, we collect some numerical results of our method. We first consider a canonical convection-diffusion equation, used as an academic test. We then test our approach with the Black-Scholes and Heston equations. The  Black-Scholes model here is the same as the one considered in \cite{ITHWeid}, while for the Heston model we consider a slightly different boundary condition from that in \cite{ITHWeid}, following \cite{ITHF}. The function \eqref{fToMin} is minimized by means of the built-in Matlab function \texttt{fminbnd}. In all the examples, we construct the inner ellipse $\Gamma_+$ as explained in Subsection \ref{constrEll} taking $\varepsilon_1=10^{-9}$, $\varepsilon_2=10^{-13}$.

\subsection{A canonical convection-diffusion operator}
As a first illustration of our method we apply it, as in \cite{W}, to
\begin{equation}\label{twoPoints}
u_t=u_{xx}+u_x\,,\quad t\geq 0\,, x\in[0,d],
\end{equation}with initial and boundary conditions
\begin{equation}\label{twoPointsCond}
u(x,0)=0\,,\quad u(0,t)=0\,,\quad u(d,t)=1\,,\quad t\geq0,\  x\in[0,d]\,.
\end{equation}We consider $d=400$ and following the following steps, according to Section~\ref{subsec:outline}:
\begin{itemize}
\item Initial data: as done in \cite{W}, we discretize \eqref{twoPoints}, \eqref{twoPointsCond} by a Chebyshev spectral method. The Chebyshev differentiation matrix used has order $64\times 64$. We fix $t=1$, $tol=5\cdot10^{-8}$;\\
\item Computation of the inner ellipse $\Gamma_+$: following (i), (ii) of page \pageref{choicez}, we choose $z_l=-40$ and $z_r=0.09$. We remark that in this case the vector $b$ in \eqref{mainpb} is constant and then $\hat{b}(z)=\frac{b}{z}$ has a singularity in the origin. For this reason, we must select $z_r>0$. The procedure of Section \ref{constrEll} gives back the point $d+\opi r=-0.1071+ 0.3075\opi$. The ellipse $\Gamma_+$ is plotted in Figure \ref{plot_twoPoints3} together with the critical parabola recovered in \cite{W} ($x=-y^2$). The green line is the one called ${\cal C}$ in Section \ref{constrEll} and it is computed by \texttt{eigtool}.

\item Computation of the integration profile: once the parameters $z_l,z_r,d,r$ are fixed, we minimize the function \eqref{fToMin}. It reaches its minimum at $a=0.4543$. The minimization is done numerically using the built-in Matlab function \texttt{fminbnd}. 
\item Truncation of the Bromwich integral: we apply Algorithm \ref{algK} of Section \ref{innerConstr} to recover the values of the truncation parameter $c$ and the constant $K$ defined by equations \eqref{firstEquation}, \eqref{secondEquation}. We fix $prec=10^{-2}$ and $K^{(1)}=100$. In three iterations Algorithm \ref{algK} computes $K=0.2251$ and $c=0.3160$. 
\item We apply quadrature formula \eqref{quadrature2}. We apply it on $n$ nodes, for $n=5,\ldots,30$. We compare the resulting approximation of $u(t)$ with the one computed by direct evaluation of the exponential matrix (\texttt{expm} function in Matlab). The results are plotted in Figure \ref{plot_twoPoints3}.
\begin{figure}[h!]
\begin{center}
\includegraphics[scale=0.3]{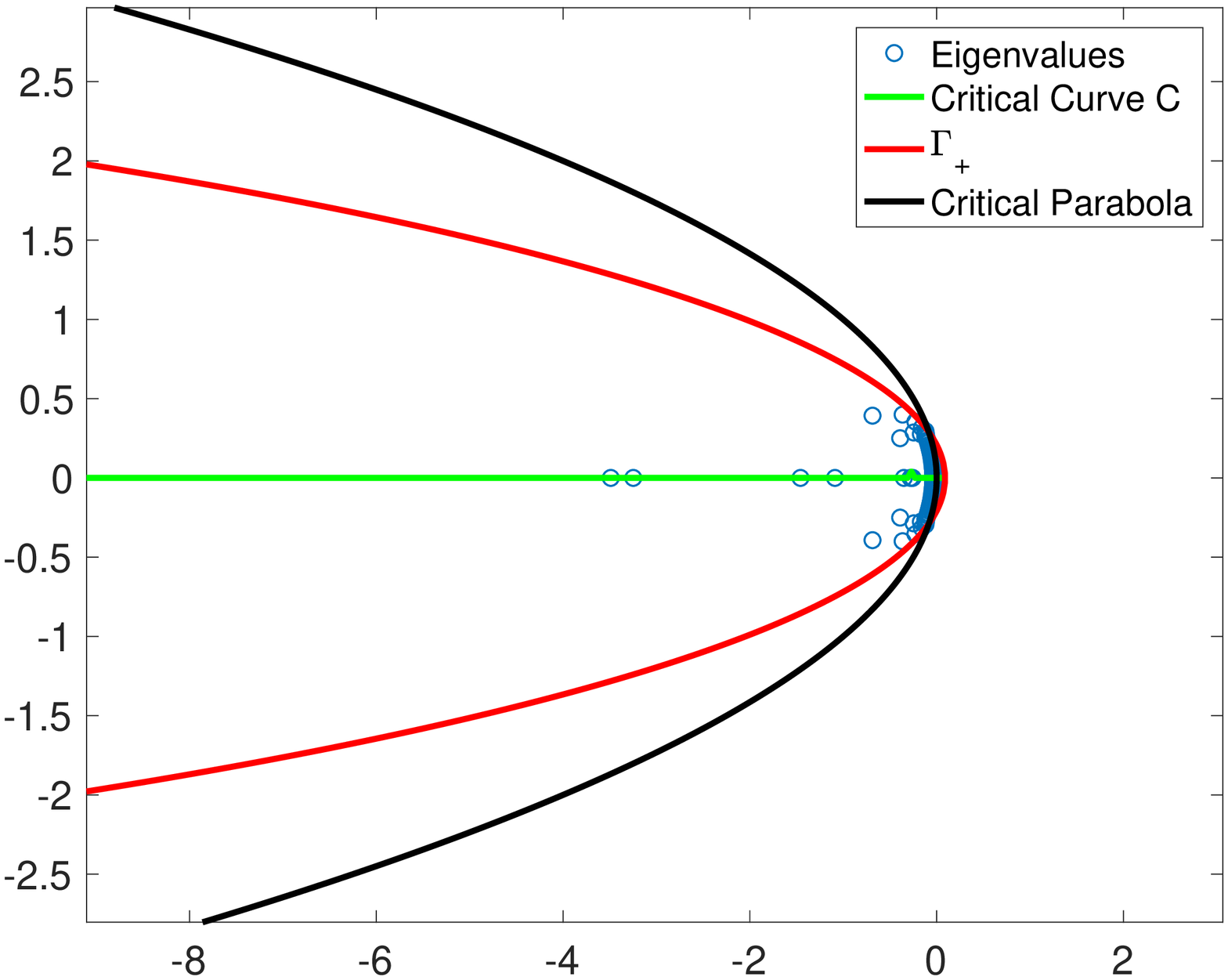}\includegraphics[scale=0.3]{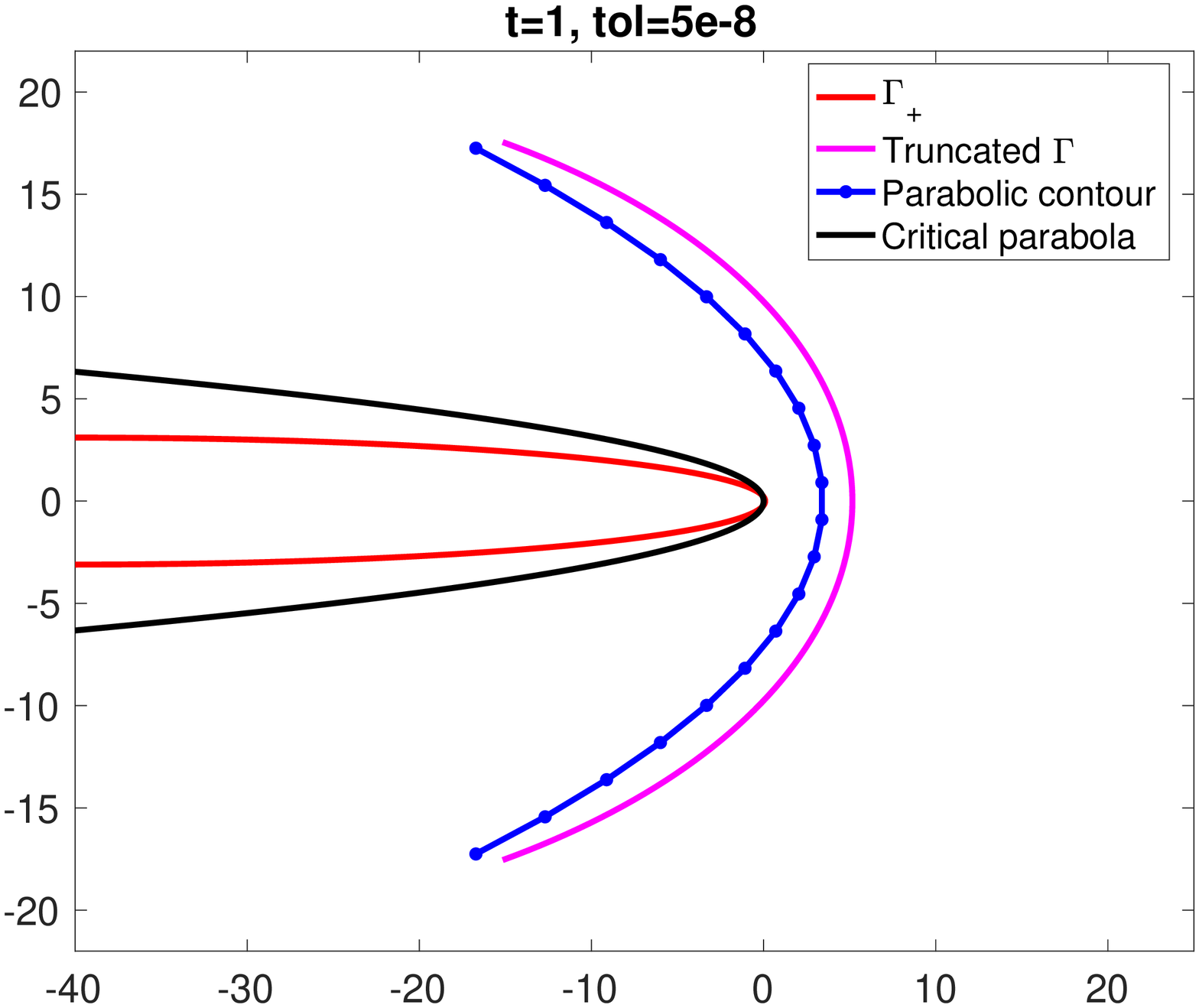}\\
\begin{center}
\includegraphics[scale=0.35]{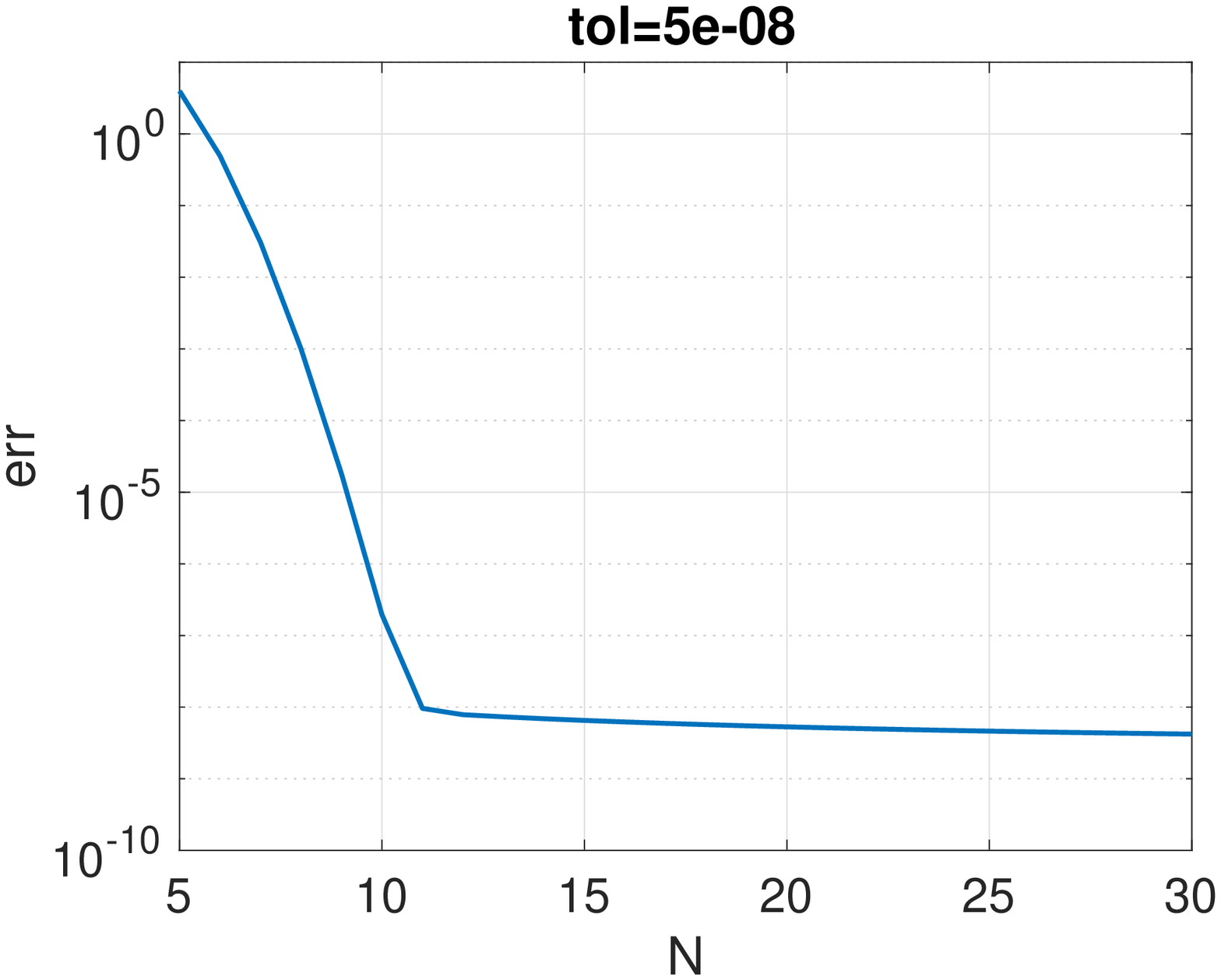}
\end{center}
\end{center}
\caption{\textit{Top Left:} comparison between $\Gamma_+$ and the theoretical critical parabola used in \cite{W}. \textit{Top Right: }plot of $\Gamma_+$ and truncated $\Gamma$. In green the critical parabola $x=-y^2$ and in blue the parabolic profile selected for $n=10$ by the method \cite{ITHWeid}. \textit{Bottom:} Error VS number of nodes.}\label{plot_twoPoints3}
\end{figure}\\
\end{itemize}
\clearpage

\subsection{Black-Scholes equation} 
The well known (deterministic) Black-Scholes equation \cite{BS} has the following form
\begin{equation}\label{BlackScholes}
\frac{\partial u}{\partial \tau}=\frac{1}{2}\sigma^2s^2\frac{\partial^2 u}{\partial s^2}+rs\frac{\partial u}{\partial s}-ru\,,\quad s>L\,,\quad 0< \tau\leq t
\end{equation}for $L,t$ given, where the unknown function $u(s,\tau)$ stands for the fair price of the option when the corresponding asset price at time $t-\tau$ is $s$ and $t$ is the maturity time of the option. Moreover, $r\geq0$, $\sigma>0$ are given constants (representing the interest rate and the volatility respectively).
In practice, for the sake of numerical approximation, we consider a bounded spatial domain, considering
\[
	L<s<S
\]for a sufficiently large $S$. We take \eqref{BlackScholes} together with the following conditions, typical for the European option call
\[
u(s,0)=\max(0,s-K)
\]
\[
u(L,\tau)=0\,,\quad 0\leq \tau\leq t
\]
\[
u(S,\tau)=S-e^{-r\tau}K\,,\quad 0\leq \tau\leq t\,.
\]
Following the same strategy adopted in \cite{ITHWeid}, we discretize in space on a uniform space grid of $200$ points in $[L,S]$, for $L=0,S=200$, using the classical centered finite difference scheme. We choose $r=0.06$, $\sigma=0.05$ and $K=80$. We plot the error for a selection of tolerances for the cases $t=1$ (Fig. \ref{BSt1}) and $t=10$ (Fig. \ref{BSt10})
\begin{figure}[h!]
\begin{center}
\includegraphics[scale=0.35]{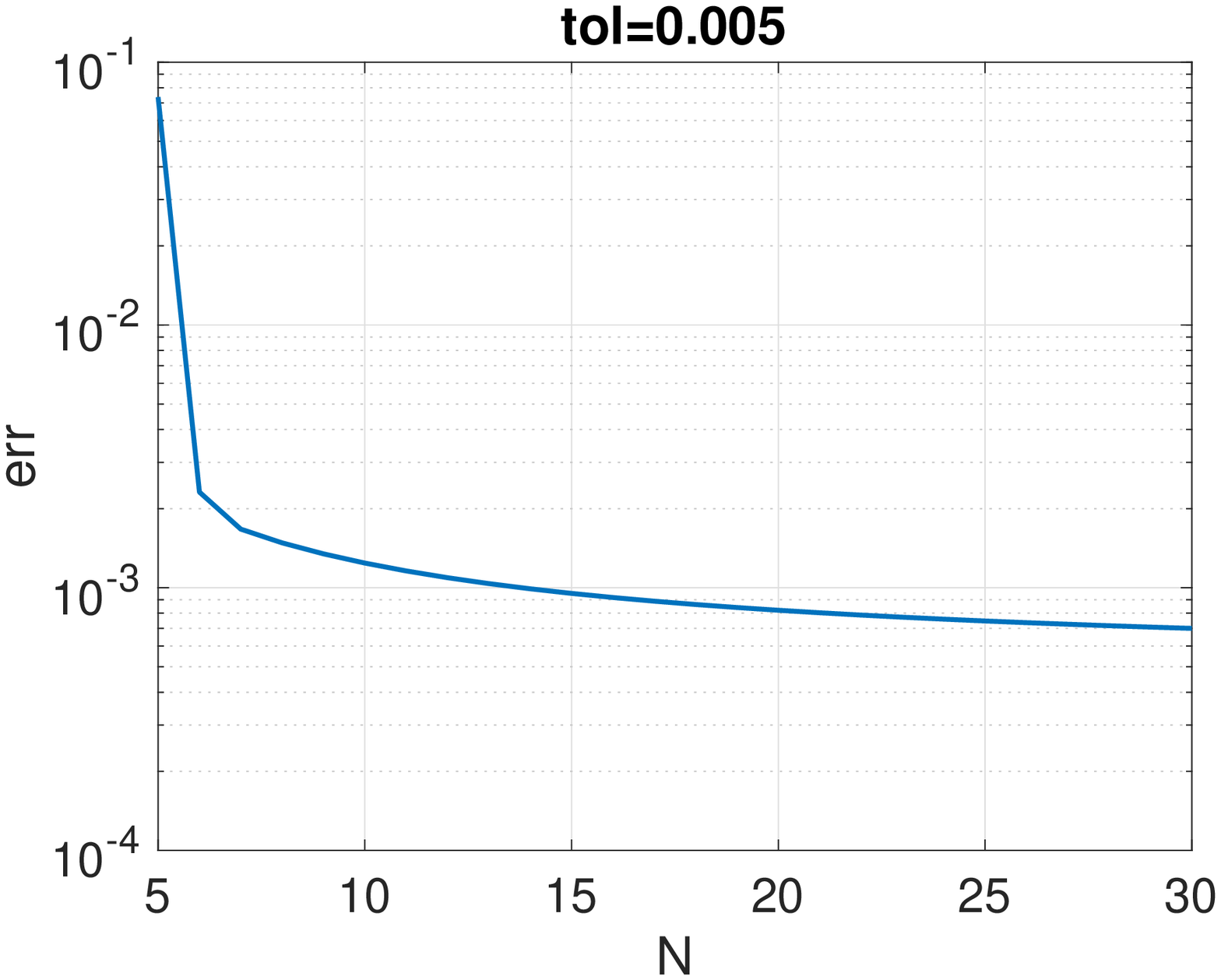}\includegraphics[scale=0.35]{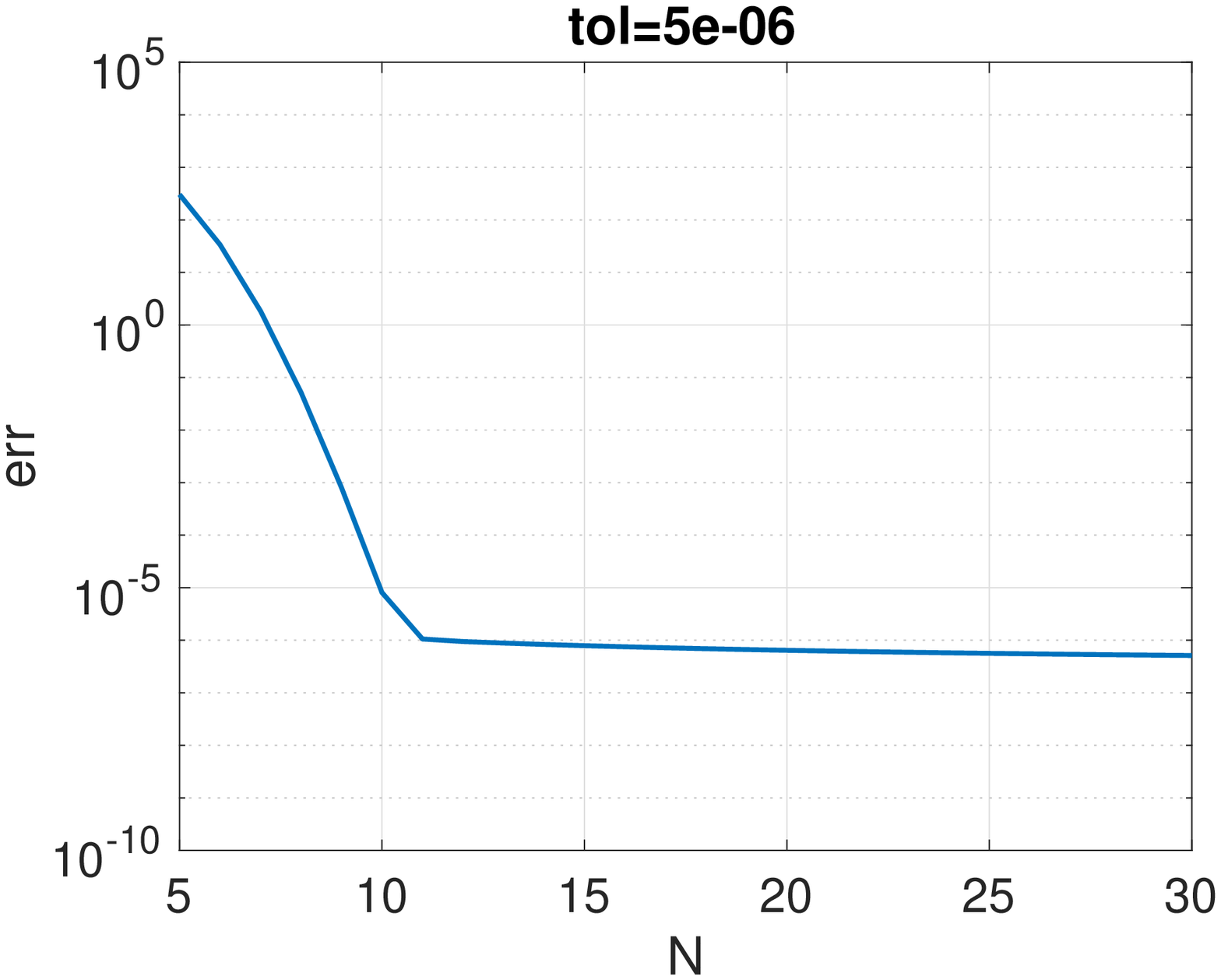}\\
\includegraphics[scale=0.35]{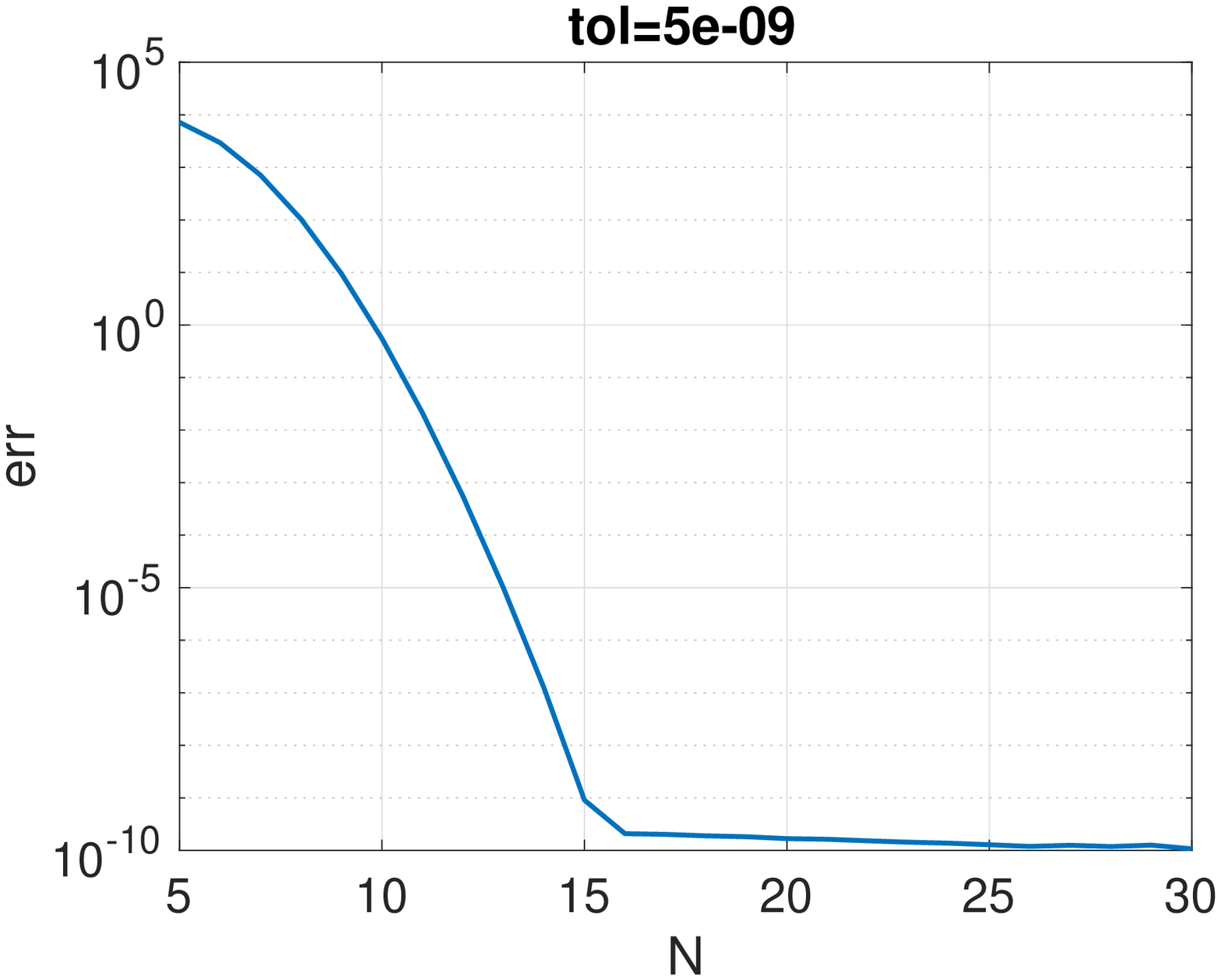}\includegraphics[scale=0.35]{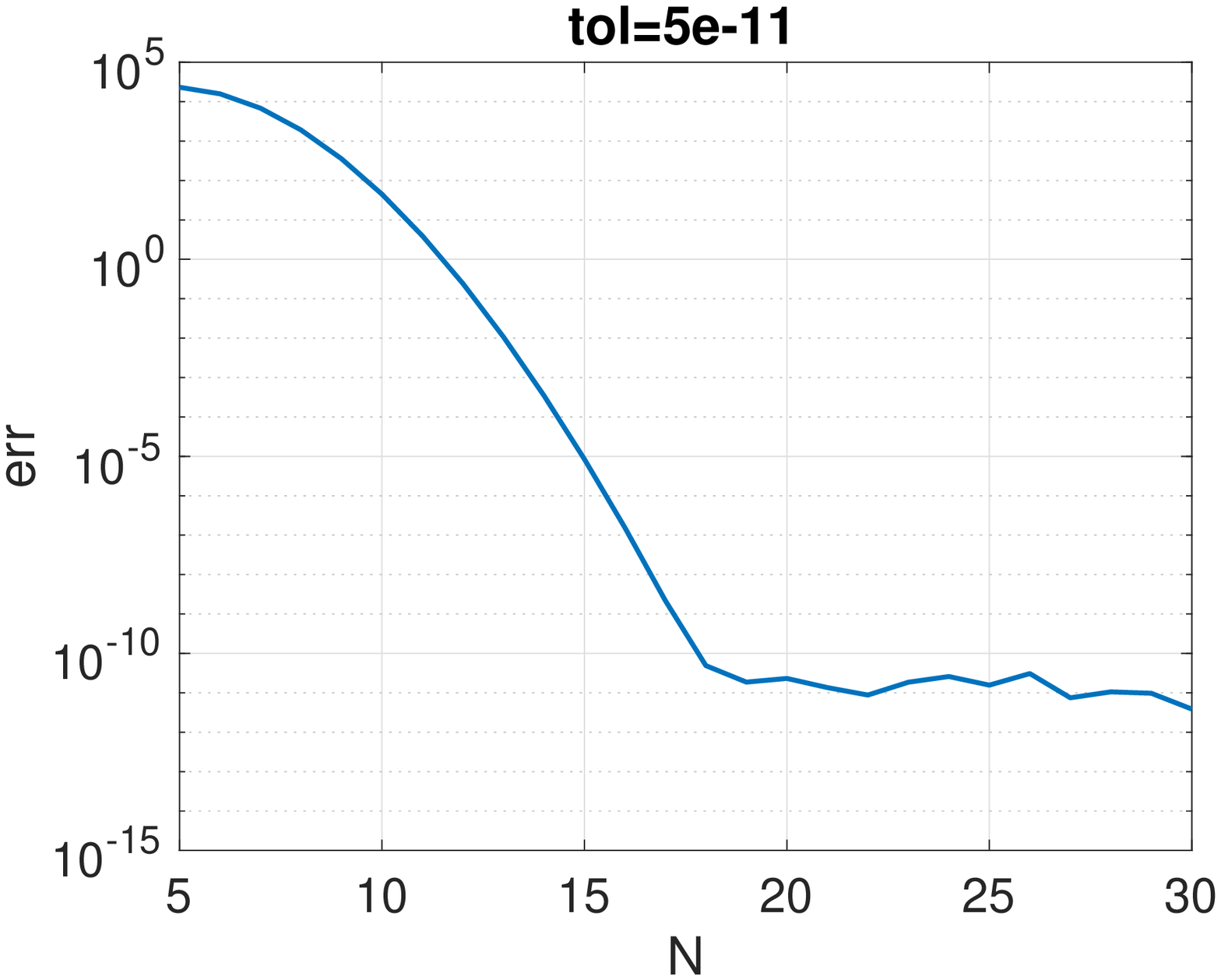}
\end{center}
\caption{Error vs number of nodes for Black-Scholes, $t=1$ ($z_l=-40,z_r=0.05$). \textit{Top left:} $tol=5\cdot 10^{-3}$ (maximal precision attainable $\approx 10^{-12}$). \textit{Top right:} $tol=5\cdot 10^{-6}$ (max. prec. $\approx 10^{-11}$). \textit{Bottom left:} $tol=5\cdot 10^{-9}$ (max. prec. $\approx10^{-11}$). \textit{Bottom right:} $tol=5\cdot 10^{-11}$ (max. prec. $\approx10^{-10}$, but in practice we are still able to reach a sharper precision).}\label{BSt1}
\end{figure}
\begin{figure}[h!]
\begin{center}
\includegraphics[scale=0.35]{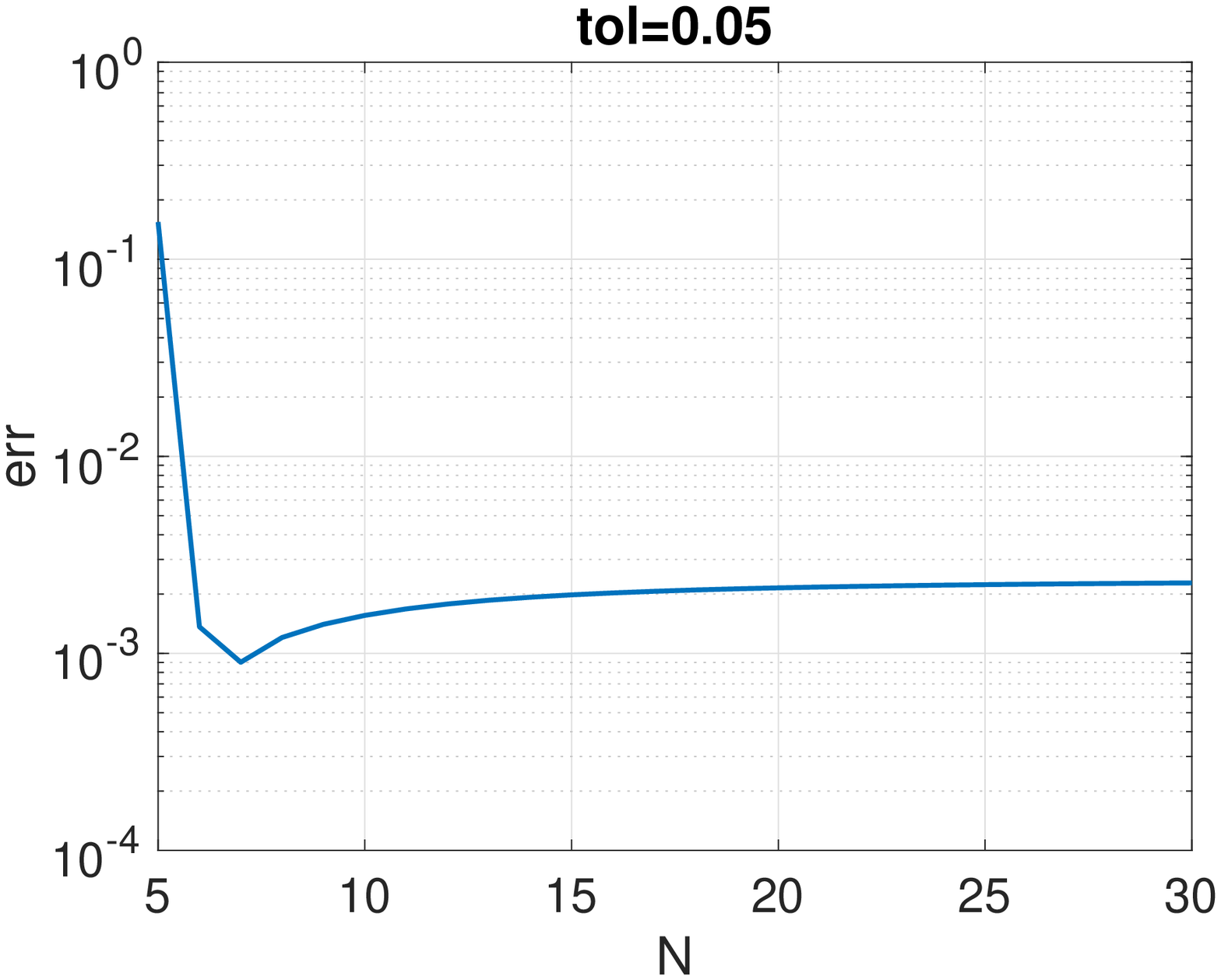}\includegraphics[scale=0.35]{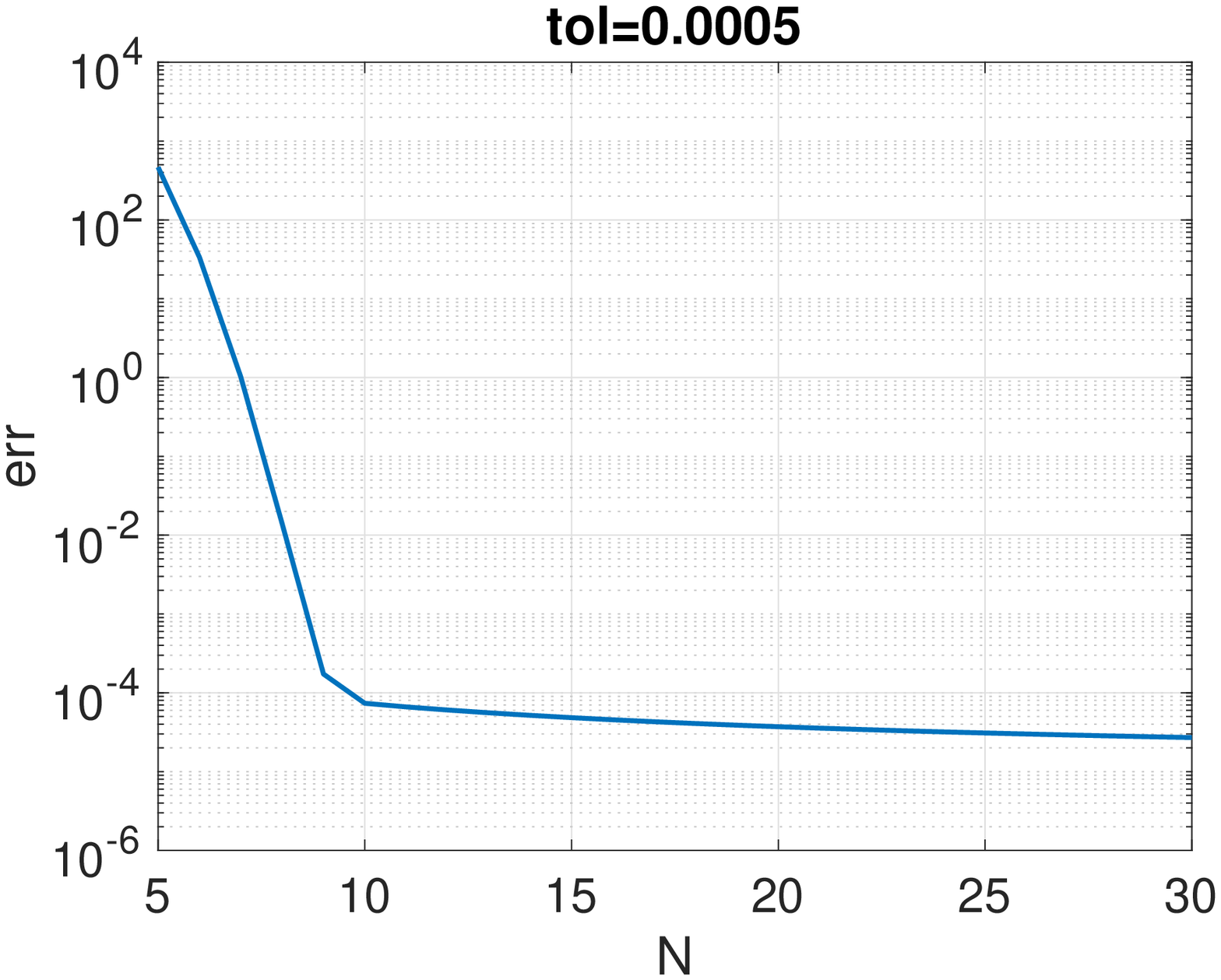}\\
\includegraphics[scale=0.35]{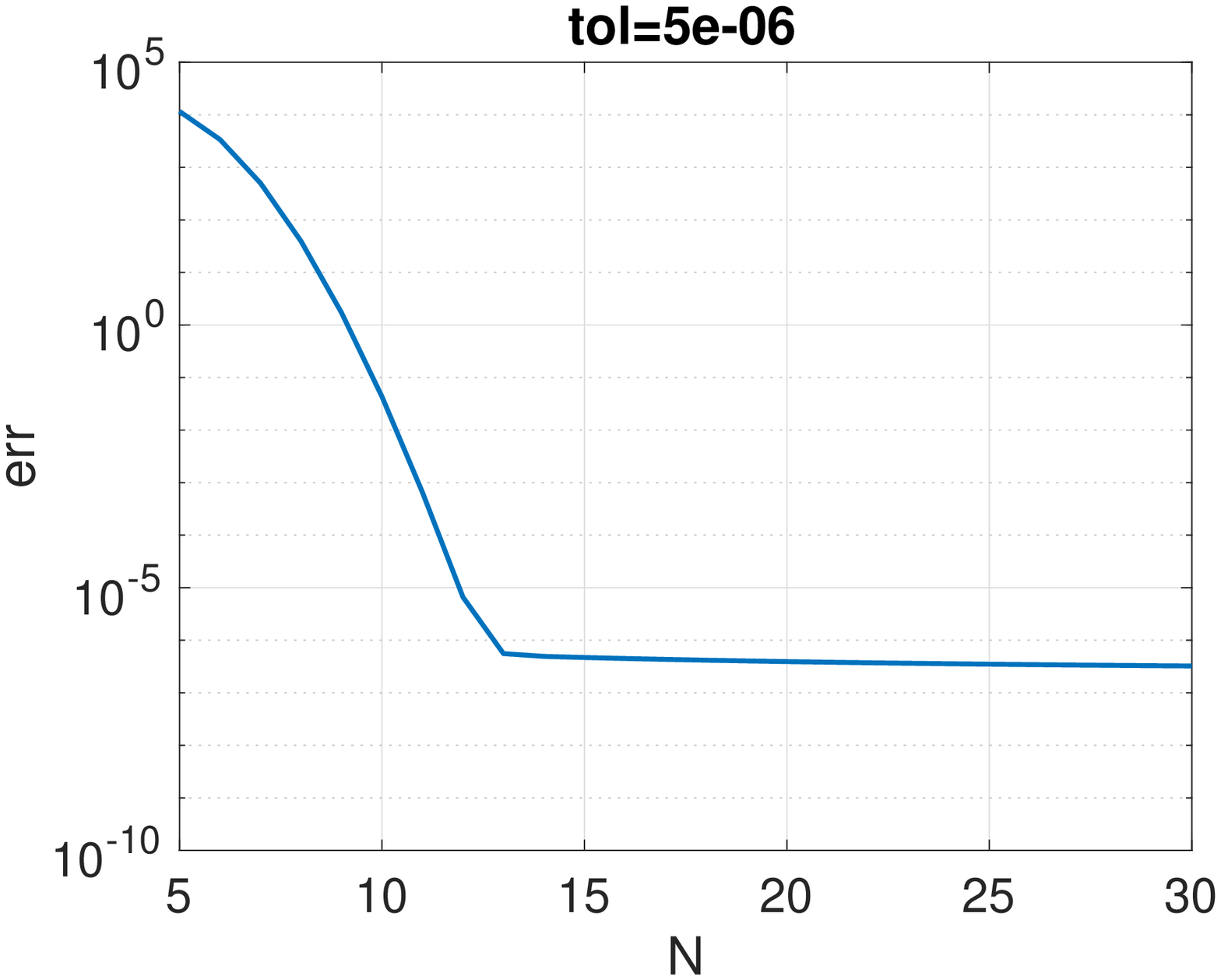}\includegraphics[scale=0.35]{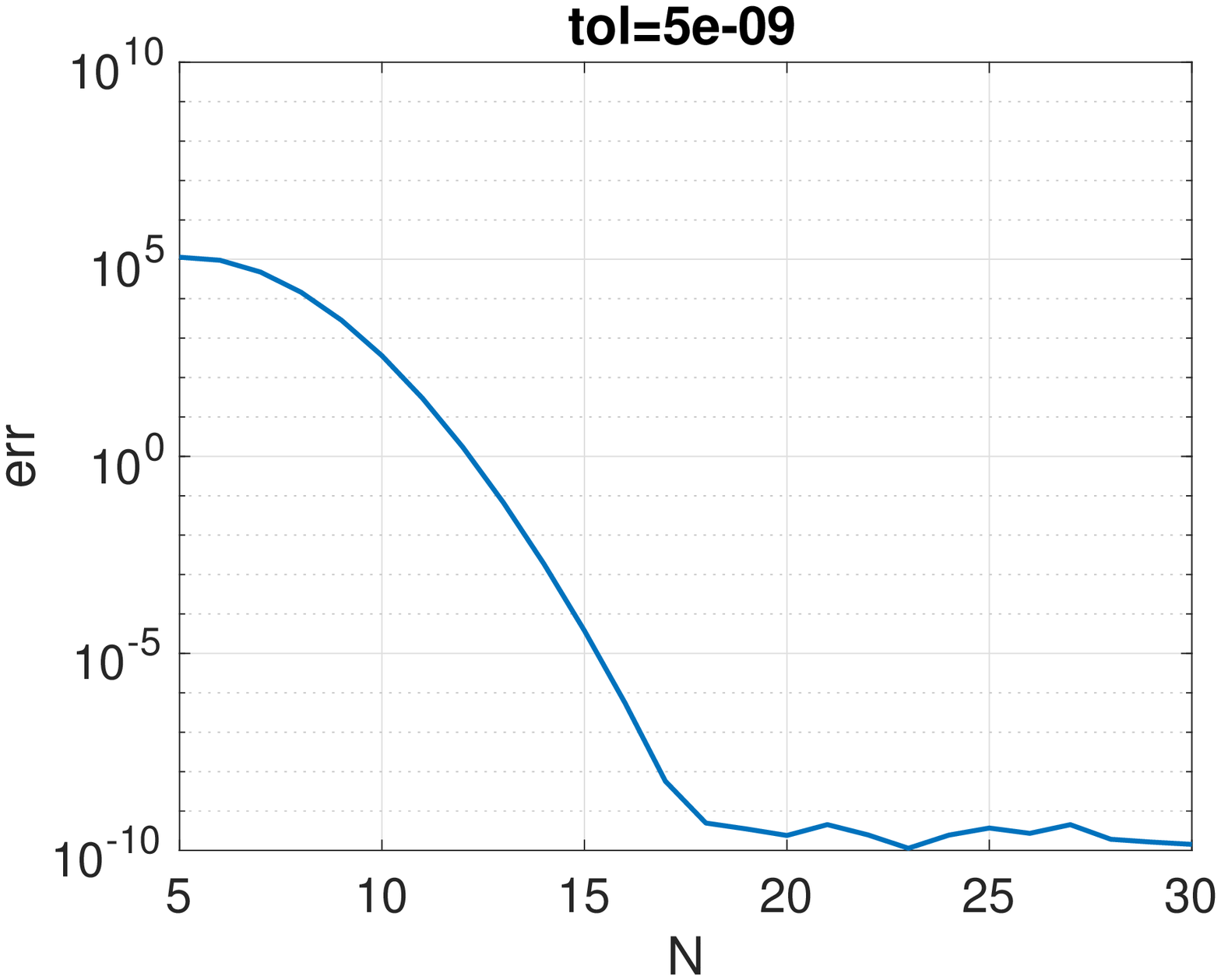}
\end{center}
\caption{Error vs number of nodes for Black-Scholes, $t=10$ ($z_l=-4,z_r=0.01$). \textit{Top left:} $tol=5\cdot 10^{-2}$ (maximal precision attainable $\approx 10^{-12}$). \textit{Top right:} $tol=5\cdot 10^{-4}$ (max. prec. $\approx 10^{-11}$). \textit{Bottom left:} $tol=5\cdot 10^{-6}$ (max. prec. $\approx10^{-11}$). \textit{Bottom right:} $tol=5\cdot 10^{-9}$ (max. prec. $\approx10^{-10}$).}\label{BSt10}
\end{figure}
\subsection{Heston equation} 
The Heston equation \cite{He} is
\begin{equation}\label{Heston}
\frac{\partial u}{\partial \tau}=\frac{1}{2}s^2v\frac{\partial^2 u}{\partial s^2}+\rho\sigma sv\frac{\partial^2 u}{\partial s\partial v}+\frac{1}{2}\sigma^2v\frac{\partial^2 u}{\partial v^2}+(r_d-r_f)s\frac{\partial u}{\partial s}+\kappa(\eta-v)\frac{\partial u}{\partial v}-r_du\,.
\end{equation}
The unknown function $u(s,v,\tau)$ represents the price of an European option when at time $t-\tau$ the corresponding asset price is equal to $s$ and its variance is $v$. We consider the equation on the unbounded domain
\[
	0\leq \tau\leq t\,,\quad s>0\,,v>0
\]where the time $t$ is fixed. The parameters $\kappa>0$, $\sigma>0$ and $\rho\in[-1,1]$ are given. Moreover, equation \eqref{Heston} is usually considered under the condition $2\kappa\eta>\sigma^2$ that is known as Feller condition.
We take equation \eqref{Heston} together with the initial condition
\[
u(s,v,0)=\max(0,s-K)
\]
where $K>0$ is fixed \textit{a priori} (and represents the strike price of the option), and boundary conditions
\[
u(L,v,\tau)=0\,,\quad 0\leq \tau\leq t\,.
\]
For the numerical solution of \eqref{Heston}, we need to choose a bounded domain of integration. In particular we fix two positive constants $S,V$ and we let the two variables $s,v$ vary in the set
\[
	0\leq s\leq S\,,\quad 0\leq v\leq V\,.
\]On the new boundary, we need to add two more conditions (specific for the European call option)
\begin{eqnarray}\label{boundCond3}
&& \frac{\partial u}{\partial s}(S,v,\tau) = e^{-r_f\tau}\,,\quad 0\leq \tau\leq t
\\
&& u(s,V,\tau)=se^{-r_f\tau}\,,\quad 0\leq \tau\leq t\,.  \nonumber
\end{eqnarray}

The spatial discretization we adopted is the one introduced in \cite{ITHF}. We take $\kappa=1.5, \eta = 0.04, \sigma = 0.3, \rho=-0.9, r_d = 0.025, r_f=0, K = 100, L = 0, S = 8K, V = 5$. We plot the error for a selection of tolerances for the cases $t=1$ (Fig. \ref{Ht1}) and $t=10$ (Fig. \ref{Ht10}).
\begin{figure}[h!]
\begin{center}
\includegraphics[scale=0.35]{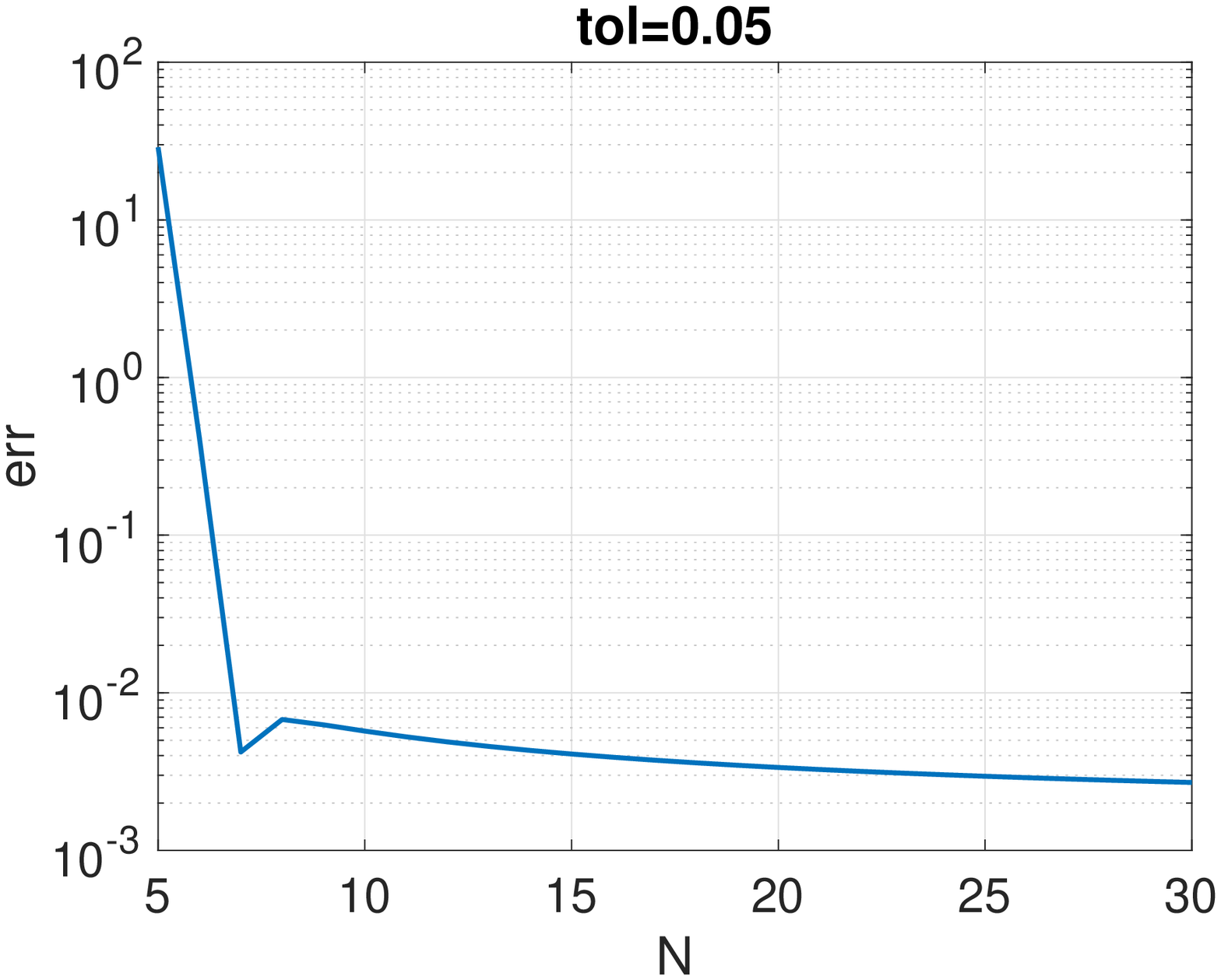}\includegraphics[scale=0.35]{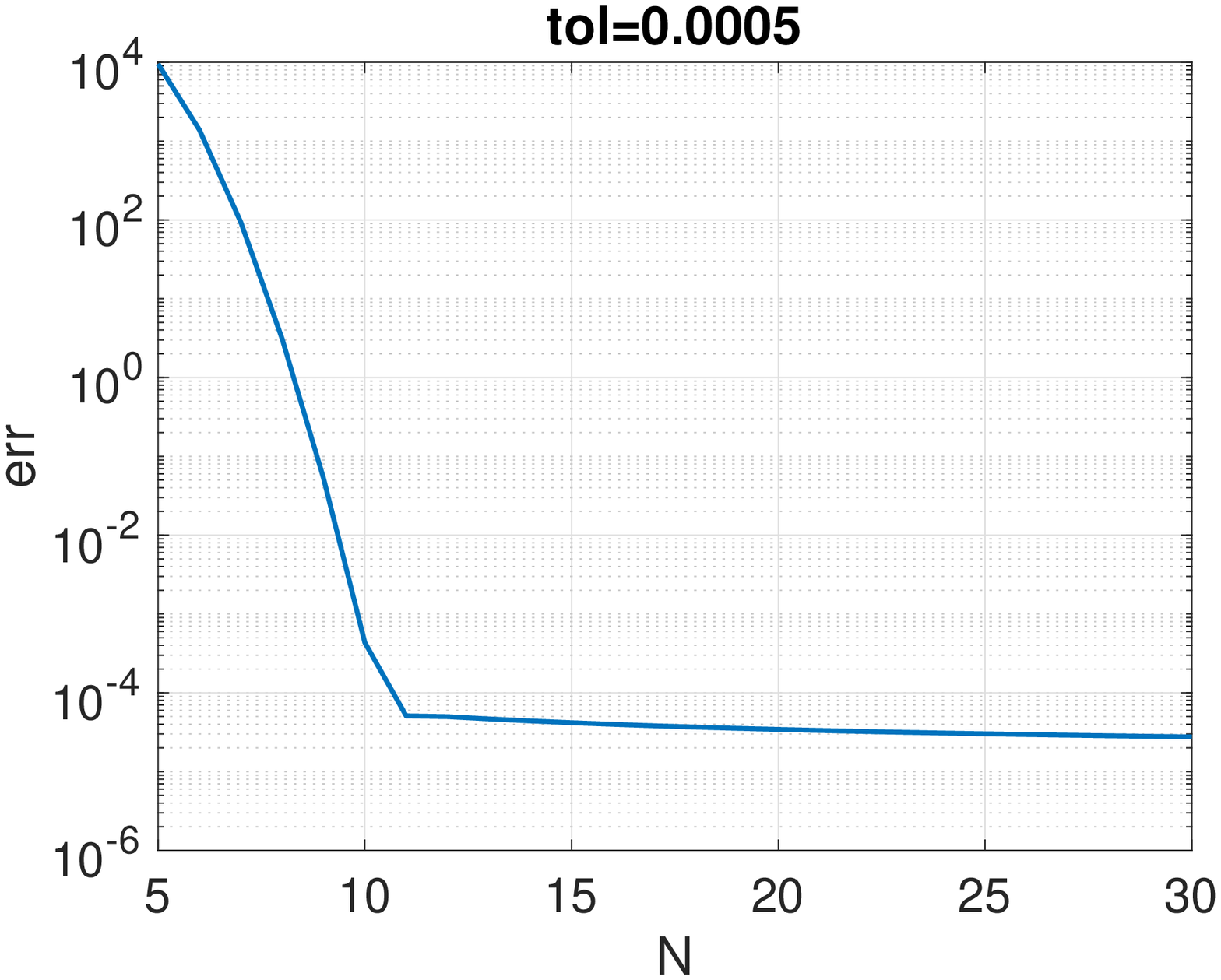}\\
\includegraphics[scale=0.35]{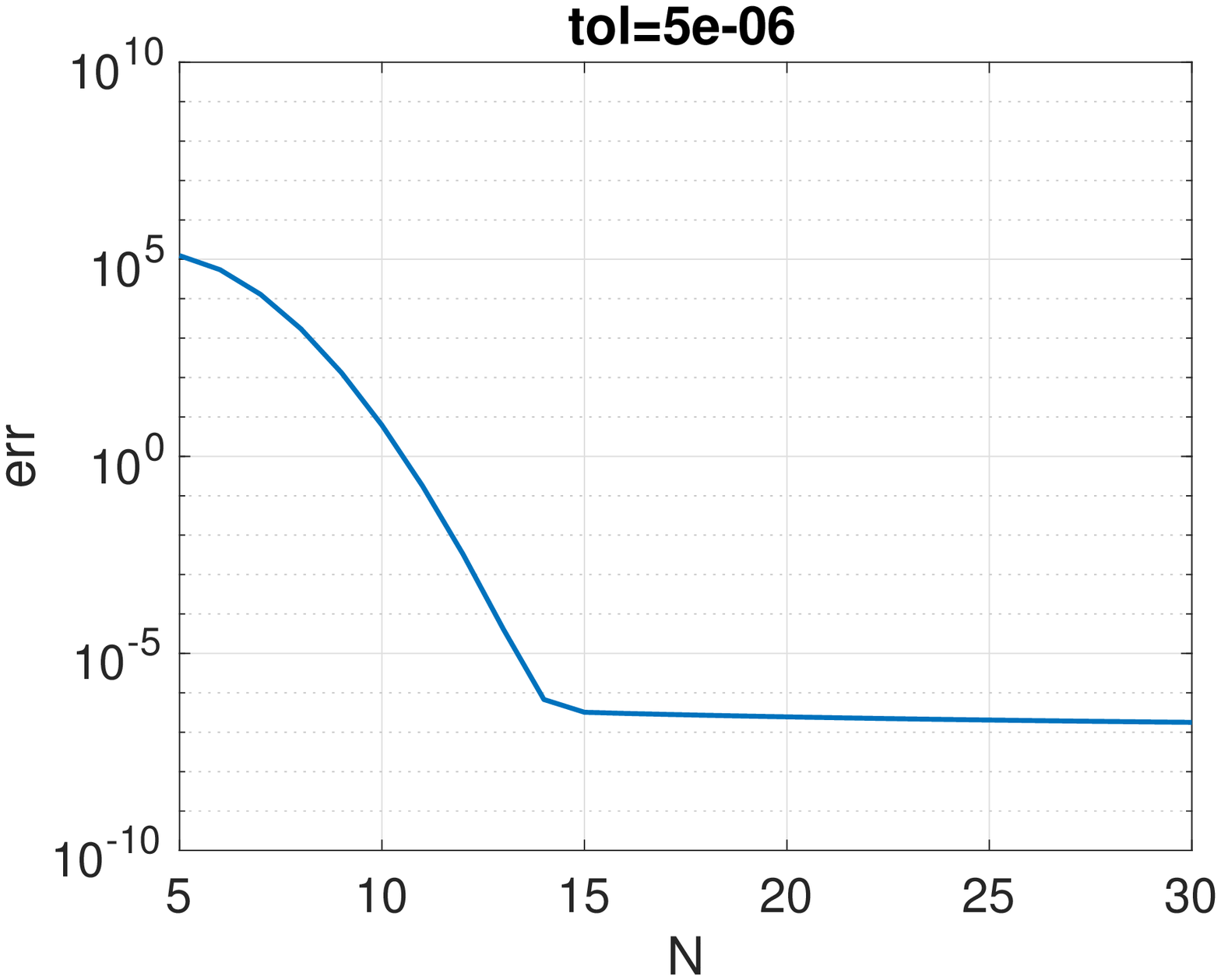}\includegraphics[scale=0.35]{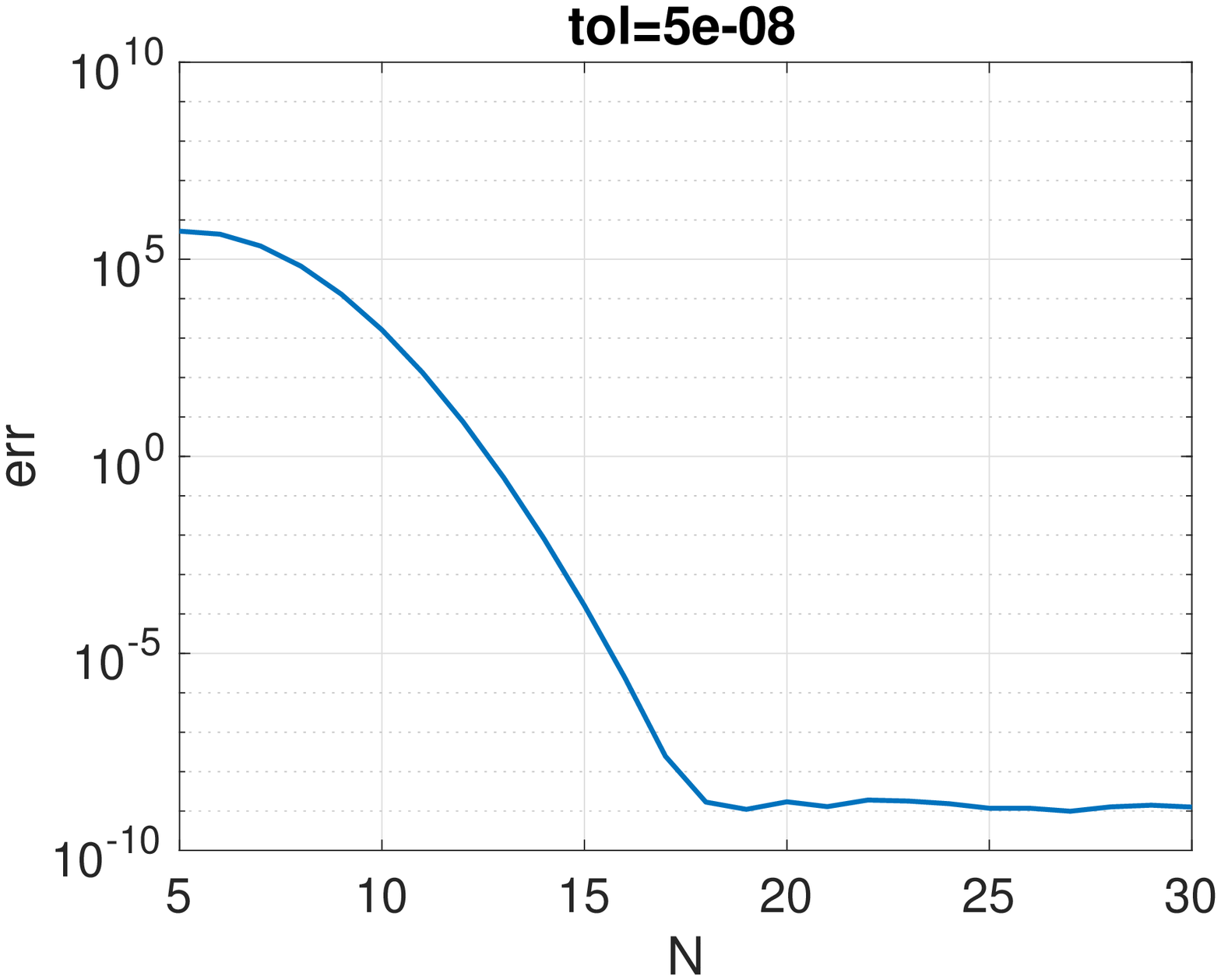}
\end{center}
\caption{Error vs number of nodes for Heston, $t=1$ ($z_l=-40,z_r=0.09$). \textit{Top left:} $tol=5\cdot 10^{-2}$ (maximal precision attainable $\approx 10^{-9}$). \textit{Top right:} $tol=5\cdot 10^{-4}$ (max. prec. $\approx 10^{-9}$). \textit{Bottom left:} $tol=5\cdot 10^{-6}$ (max. prec. $\approx10^{-8}$). \textit{Bottom right:} $tol=5\cdot 10^{-8}$ (max. prec. $\approx10^{-7}$, but in practice we are still able to reach a sharper precision).}\label{Ht1}
\end{figure}

\begin{figure}[h!]
\begin{center}
\includegraphics[scale=0.35]{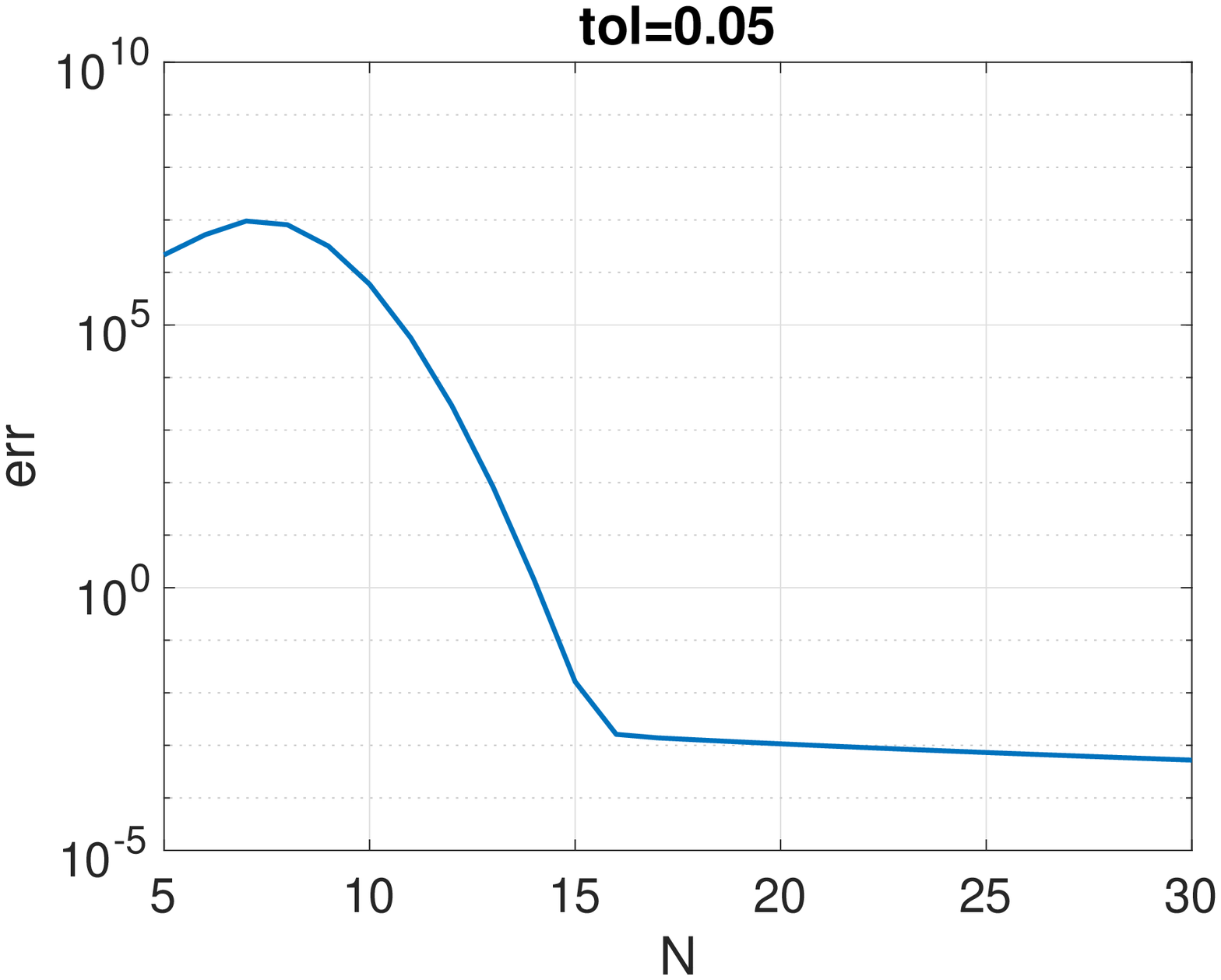}\includegraphics[scale=0.35]{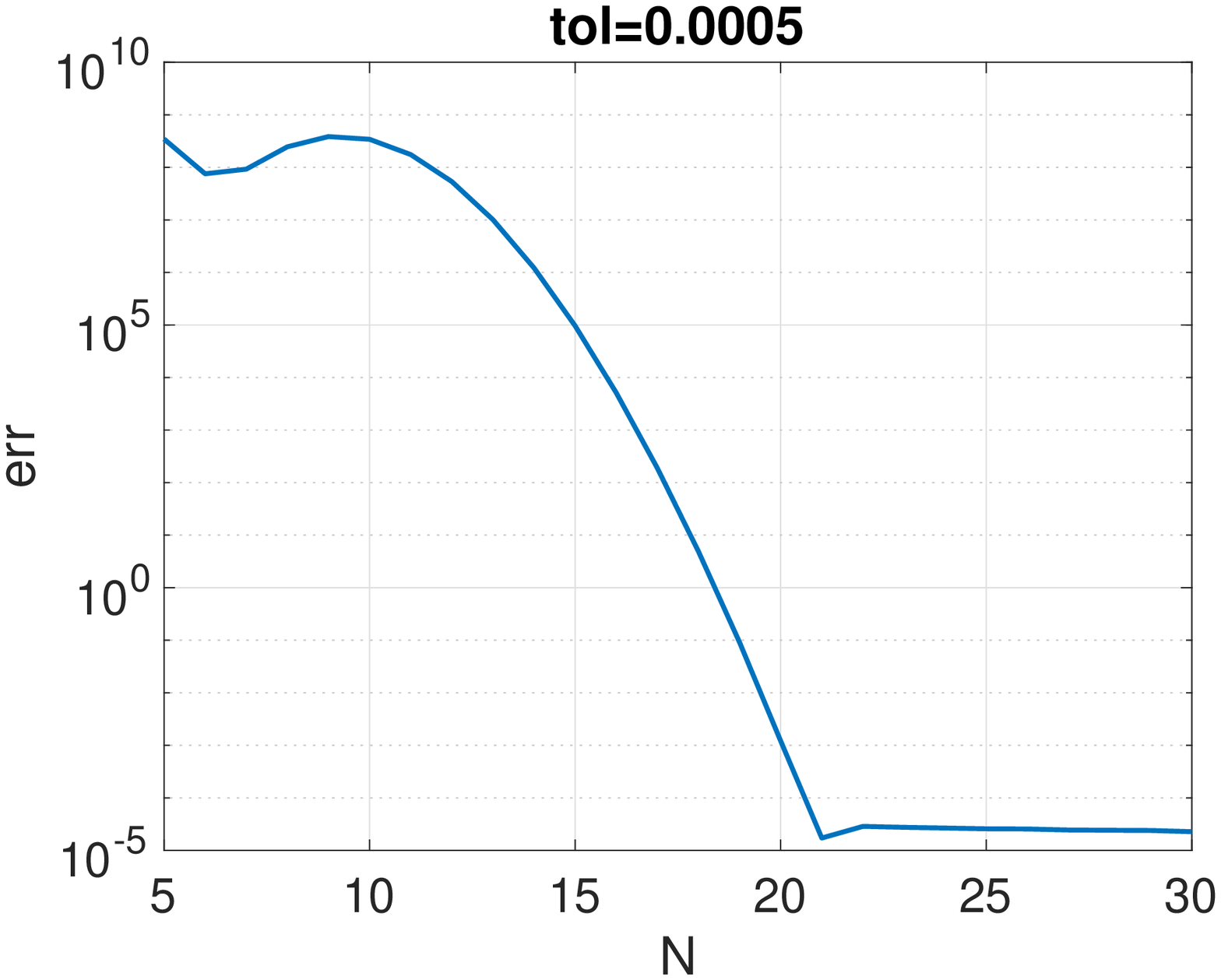}\\
\includegraphics[scale=0.35]{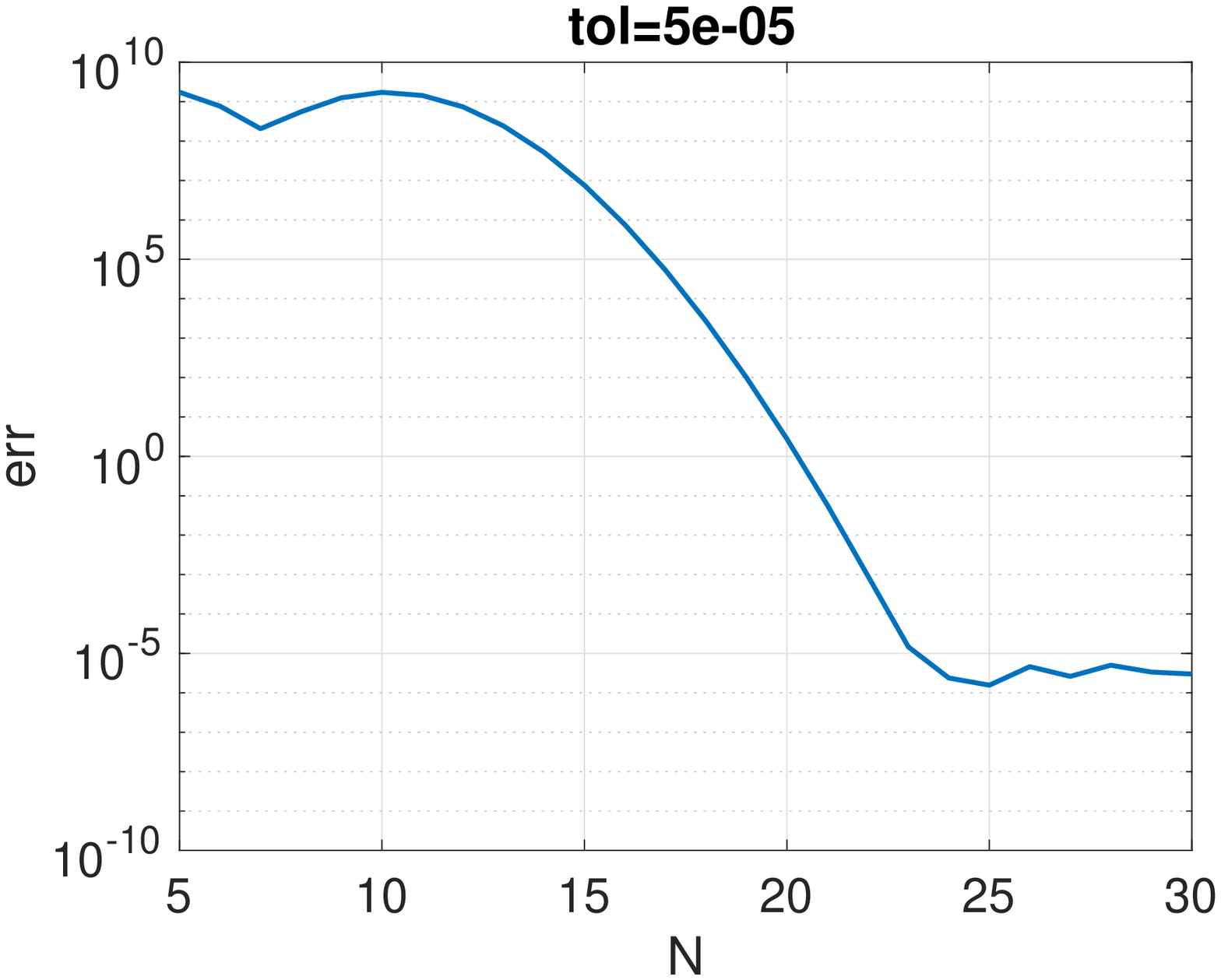}\includegraphics[scale=0.35]{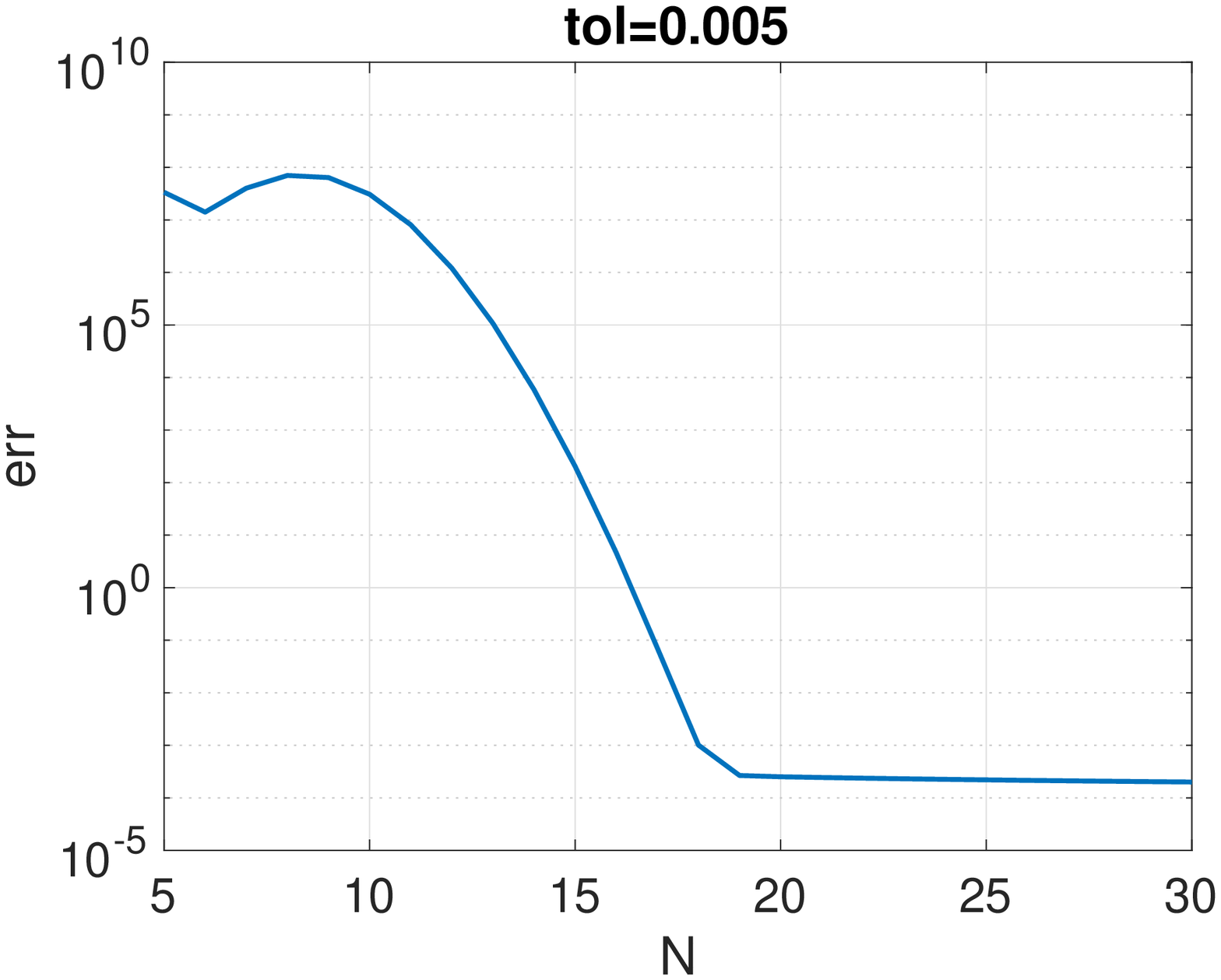}
\end{center}
\caption{Error vs number of nodes for Heston, $t=10$ ($z_l=-4,z_r=0.06$). \textit{Top left:} $tol=5\cdot 10^{-2}$ (maximal precision attainable $\approx 10^{-8}$). \textit{Top right:} $tol=5\cdot 10^{-4}$ (max. prec. $\approx 10^{-6}$). \textit{Bottom left:} $tol=5\cdot 10^{-5}$ (max. prec. $\approx10^{-5}$). \textit{Bottom right:} $tol=5\cdot 10^{-6}$ (max. prec. $\approx10^{-4}$, but the real accuracy we can get is sharper).}\label{Ht10}
\end{figure}
\section{Comparison with other methods}\label{secComp}
\subsection{Comparison with parabolic contours} 
A direct comparison with the method in \cite{ITHWeid} is not possible, since our algorithm works with the goal of a fixed accuracy while the one reported in \cite{ITHWeid} aims to reduce the error as $N$ grows. Anyway, for the sake of comparison, we can run our algorithm as follows:

\begin{itemize}
\item[-] for a set of target precisions ($tol=10^{-1},10^{-2},\ldots$ for example), we run our algorithm;
\item[-] for each tolerance we save the smallest number of quadrature nodes $N$ for which 
$tol$ is reached;
\item[-] for each tolerance we save the corresponding error $err(N)$.
\end{itemize}

Once we get the array $[N,err(N)]$, we can compare it with the corresponding error coming from the method of \cite{ITHWeid}. We make our experiments both for Black-Sholes and for Heston equation, for a selection of times $t$. In Figure \ref{compBSW} the comparison for Black-Scholes is depicted
\begin{figure}[h!]
\begin{center}
\includegraphics[scale=0.35]{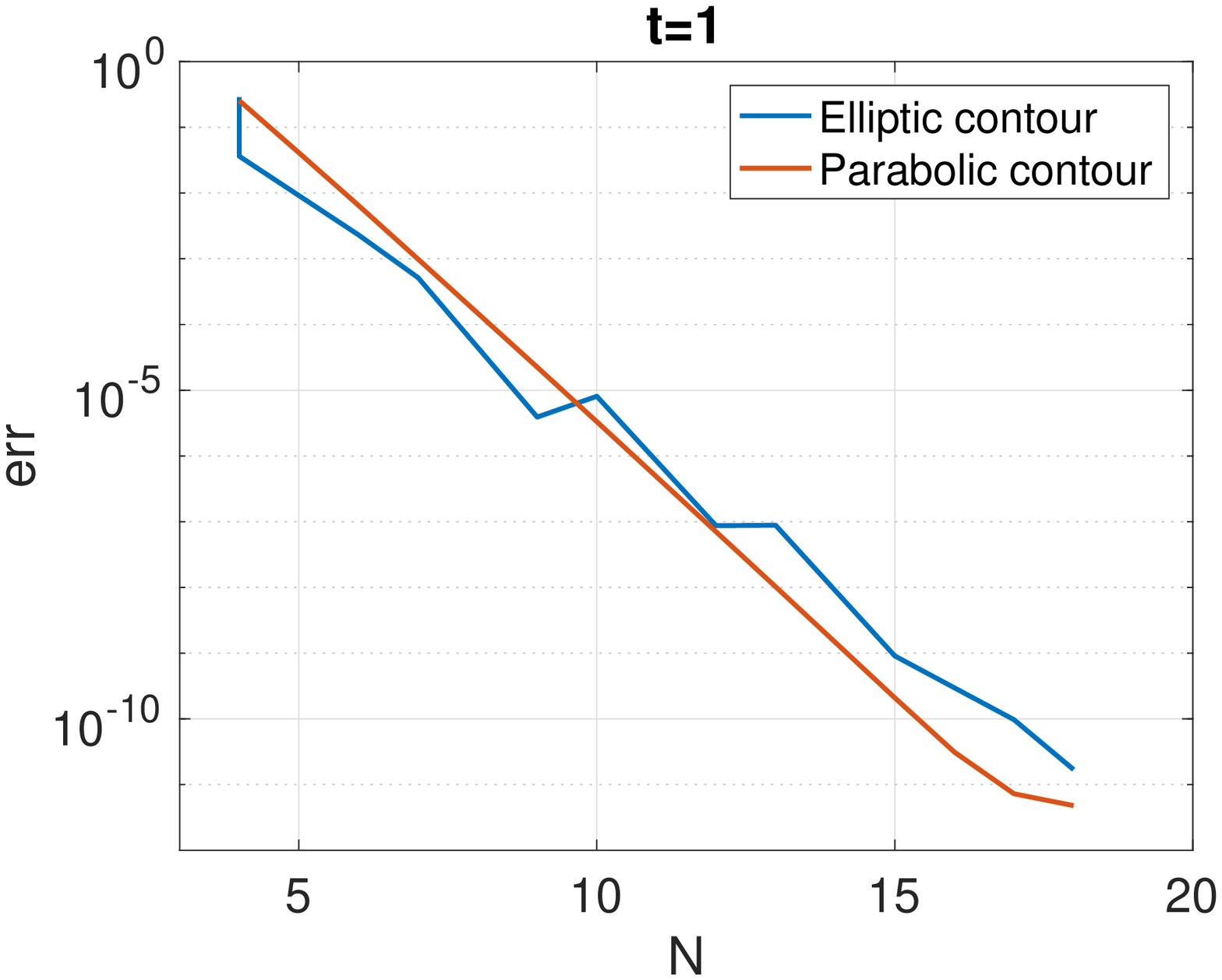}\includegraphics[scale=0.35]{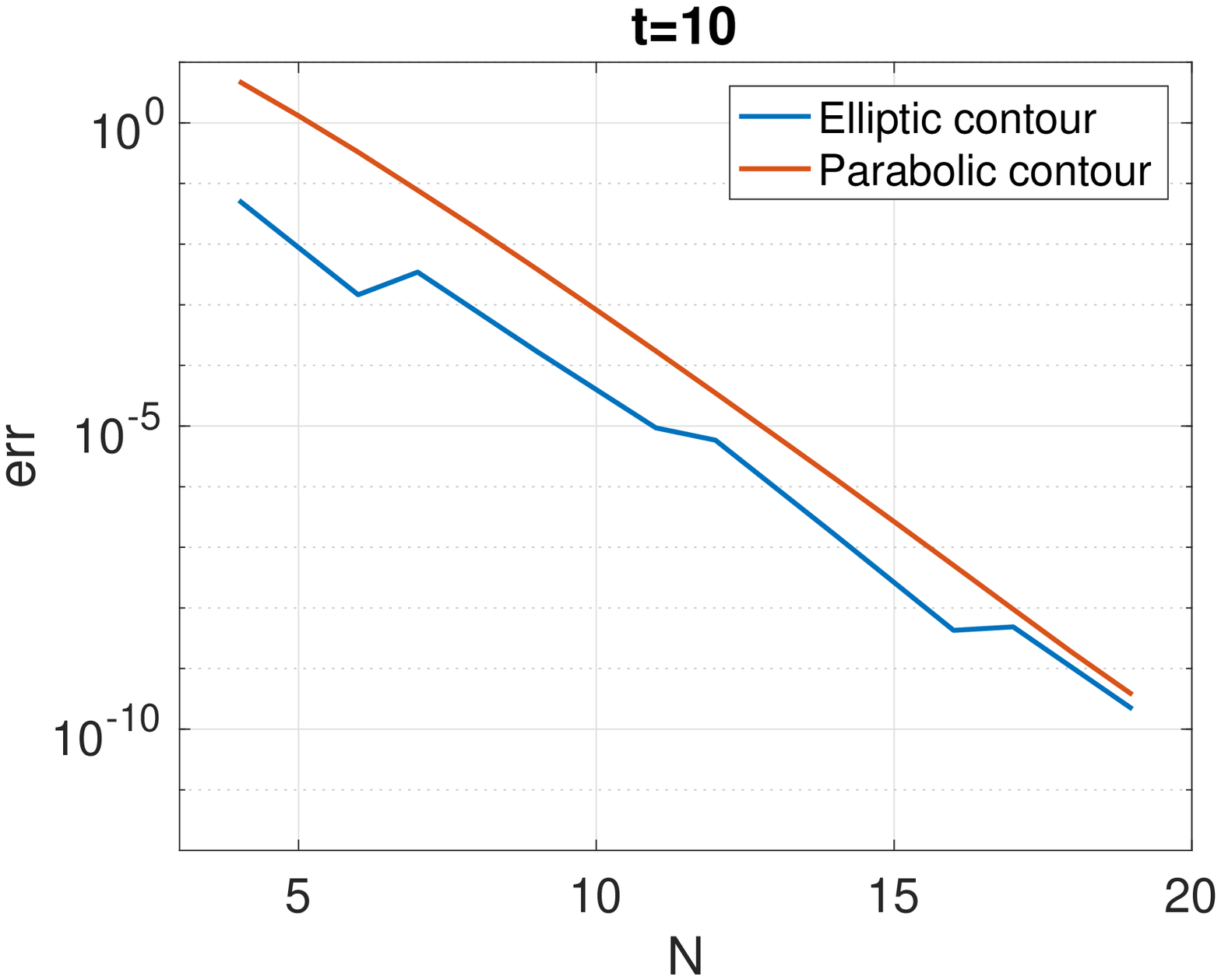}\\

\includegraphics[scale=0.35]{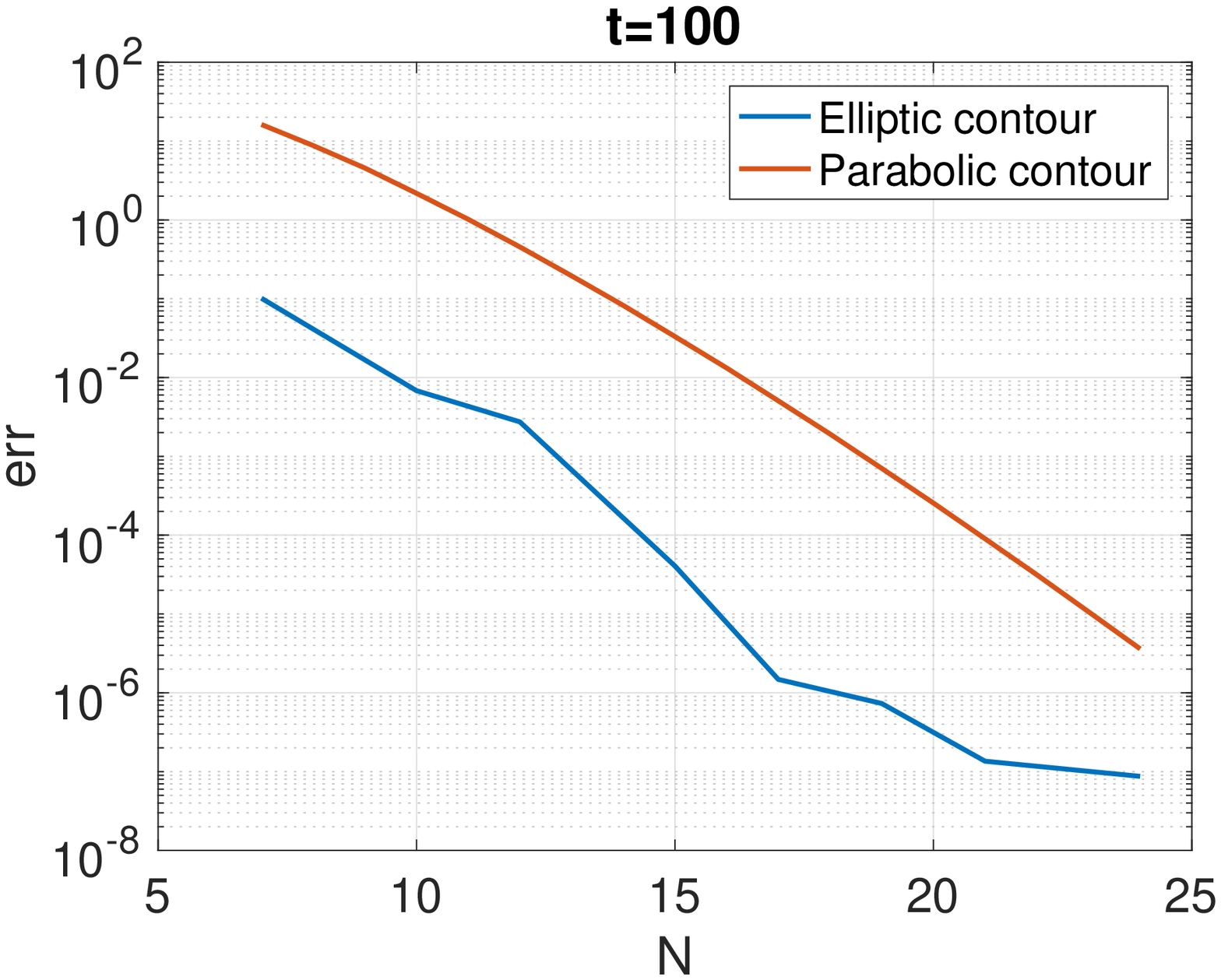}
\caption{Black-Scholes equation, comparison between \cite{ITHWeid} and our method. \textit{Top left:} $t=1$, $z_l=-40$, $z_r=0.05$. \textit{Top right:} $t=10$, $z_l=-4$, $z_r=0.01$. \textit{Bottom: }$t=100$, $z_l=-0.5$, $z_r=0.001$.}\label{compBSW}
\end{center}

\end{figure}

For the comparison in the case of Heston equation, we recall that the boundary condition considered in \cite{ITHWeid} is slightly different from \eqref{boundCond3}. However the spectrum of the discrete operator $A$ is quite similar, and  thus we implement the method in \cite{ITHWeid} using the same inner parabola. Qualitatively, the numerical results we get for $t=1$ are the same as the one in \cite{ITHWeid}. The comparison is showed in Figure \ref{compHW}
\begin{figure}[h!]
\begin{center}
\includegraphics[scale=0.35]{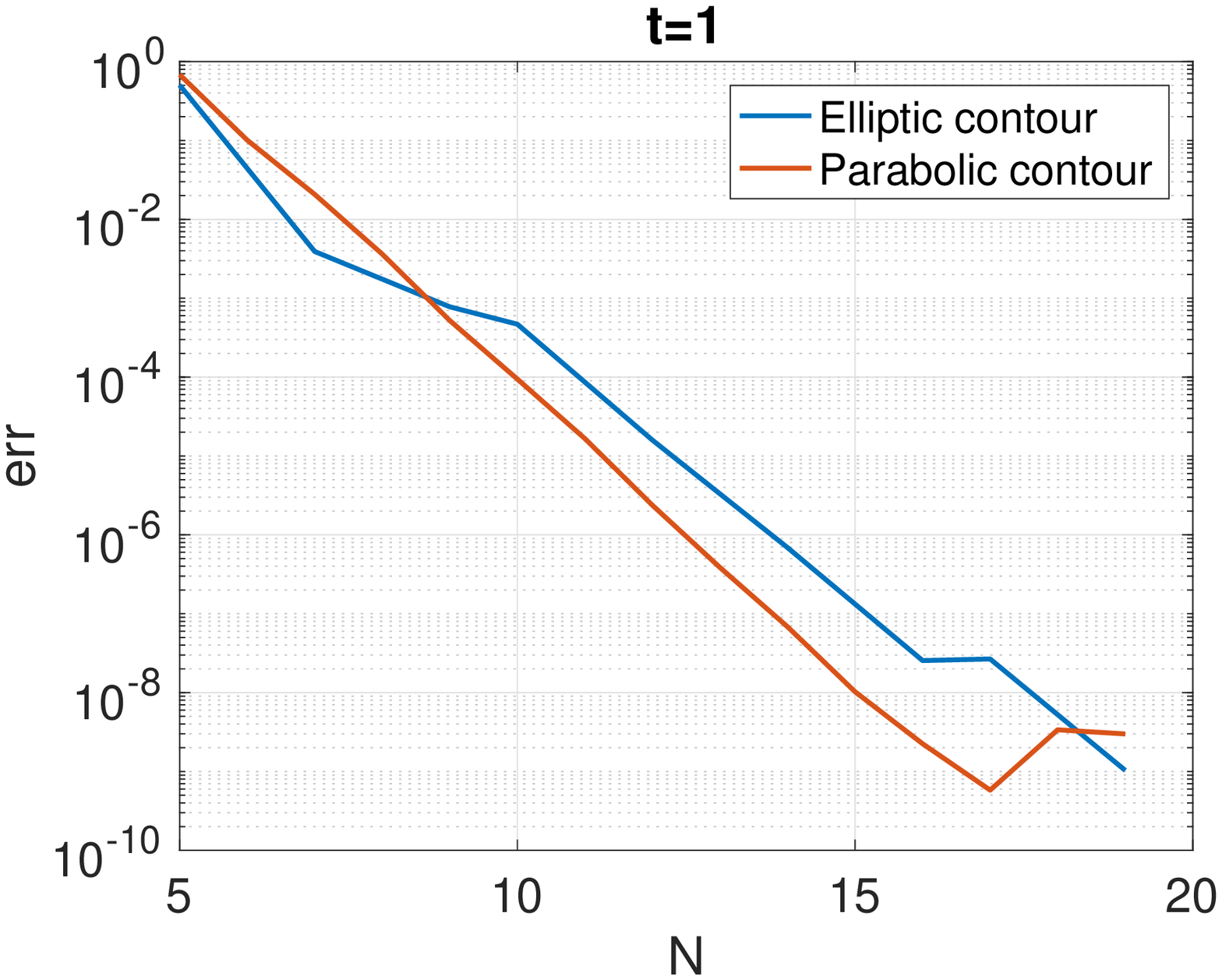}\includegraphics[scale=0.35]{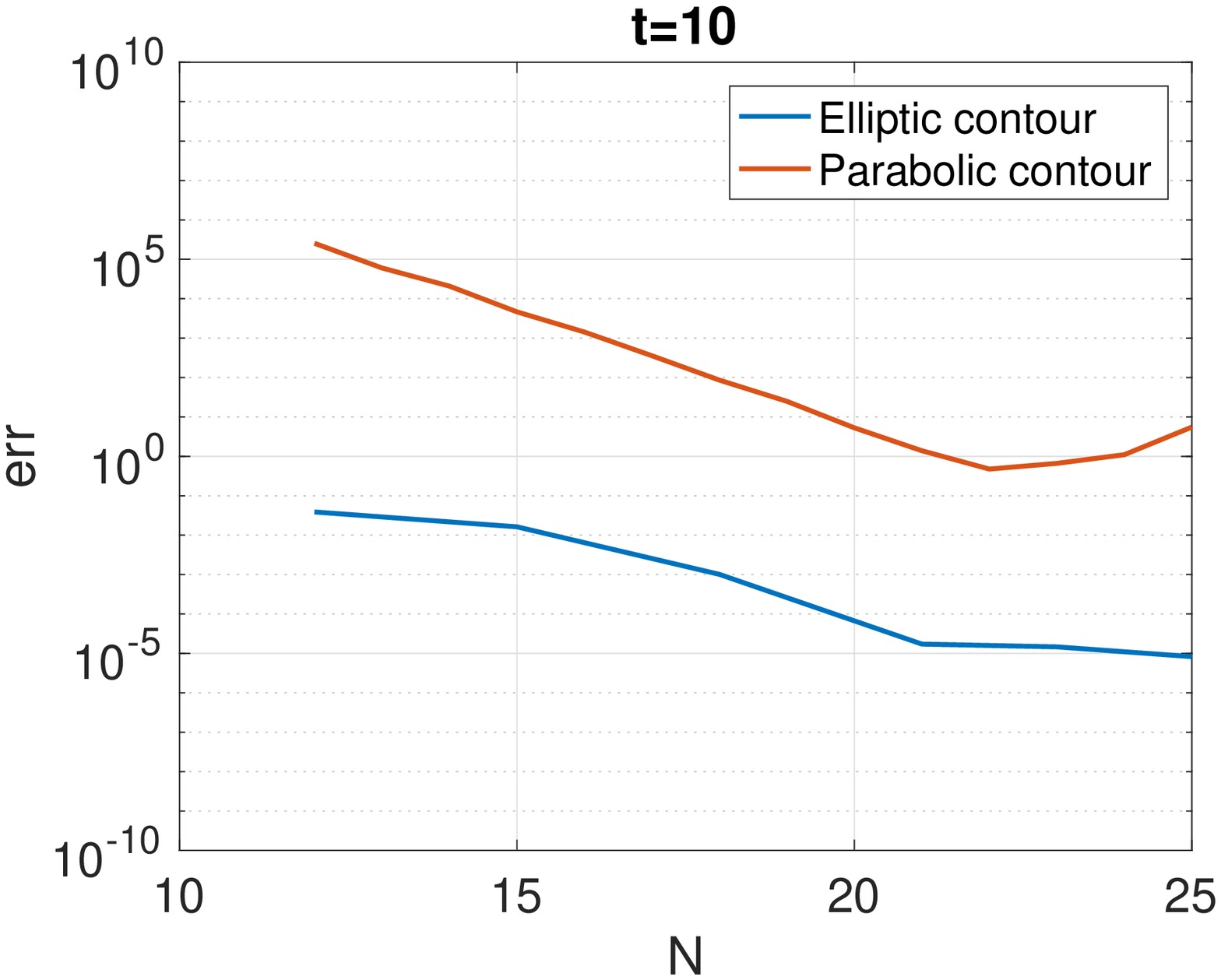}
\end{center}
\caption{Heston equation, comparison between \cite{ITHWeid} and our method. \textit{Left:} $t=1$, $z_l=-40$, $z_r=0.09$. \textit{Right:} $t=10$, $z_l=-4$, $z_r=0.06$.}\label{compHW}
\end{figure}
\subsection{Comparison with hyperbolic contours} 
By using the same startegy as the previous subsection, we build the array $[N,err(N)]$. We compare these results with the method in \cite{LPS} for both the cases of Black-Scholes and Heston equations. We set $\alpha=0.4$, $d=0.4$ as geometric parameters to bound the resolvent norm, as explained in \cite{LPS}. This choice turns out to be effective and the method seems to converge. However, in \cite{LPS} no optimality criteria are given to select $\alpha,d$ and they necessarily depend on the spectral geometry of $A$. Since \cite{LPS} works on time intervals of the form $[t_0,\Lambda t_0]$, we provide a comparison for a selection of time ratios $\Lambda$ and  for the times $t=1,10$. The results are reported in Figures \ref{compBSLP1}, \ref{compBSLP10}, \ref{compHLP1} and \ref{compHLP10}
\begin{figure}[h!]
\begin{center}
\includegraphics[scale=0.35]{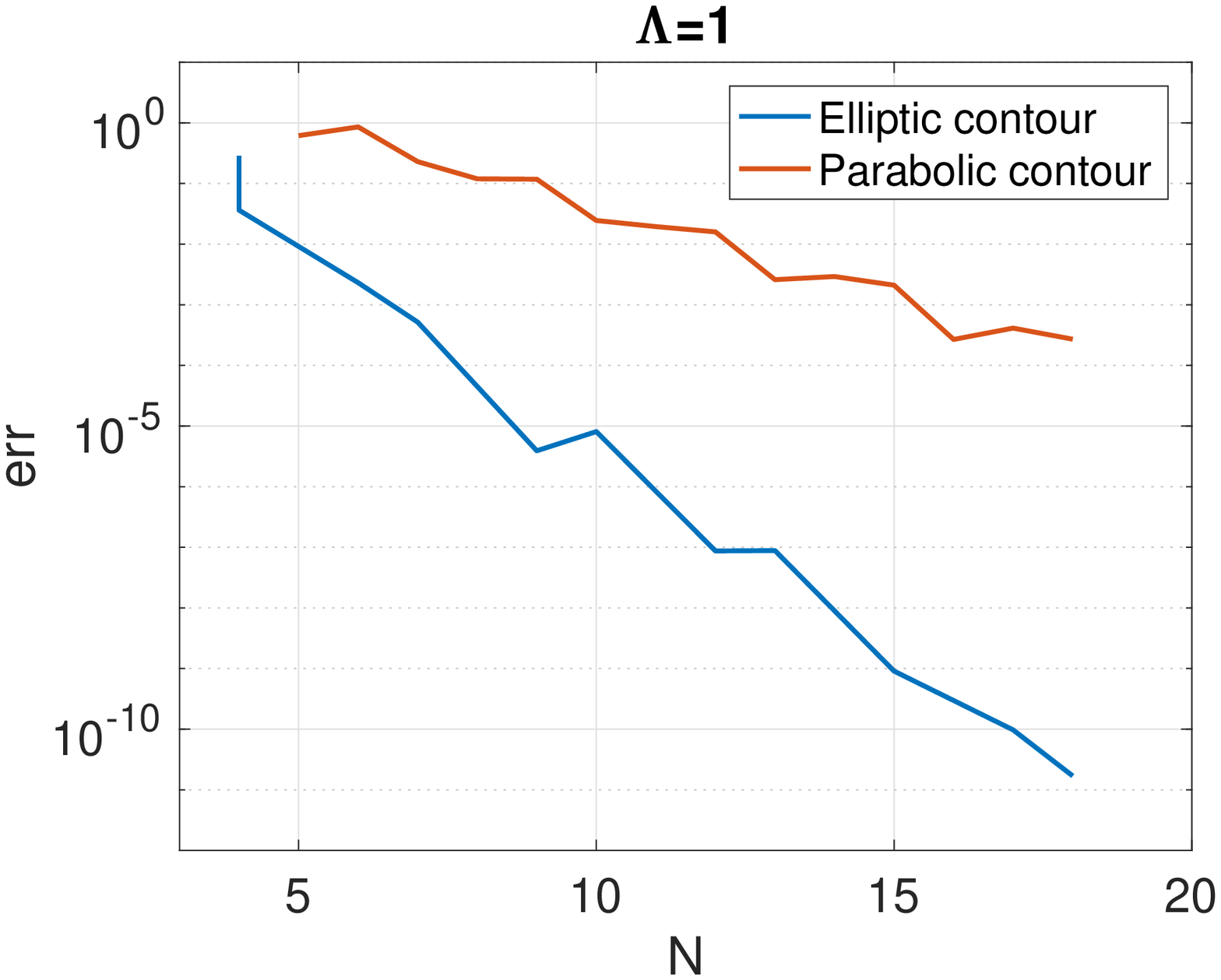}\includegraphics[scale=0.35]{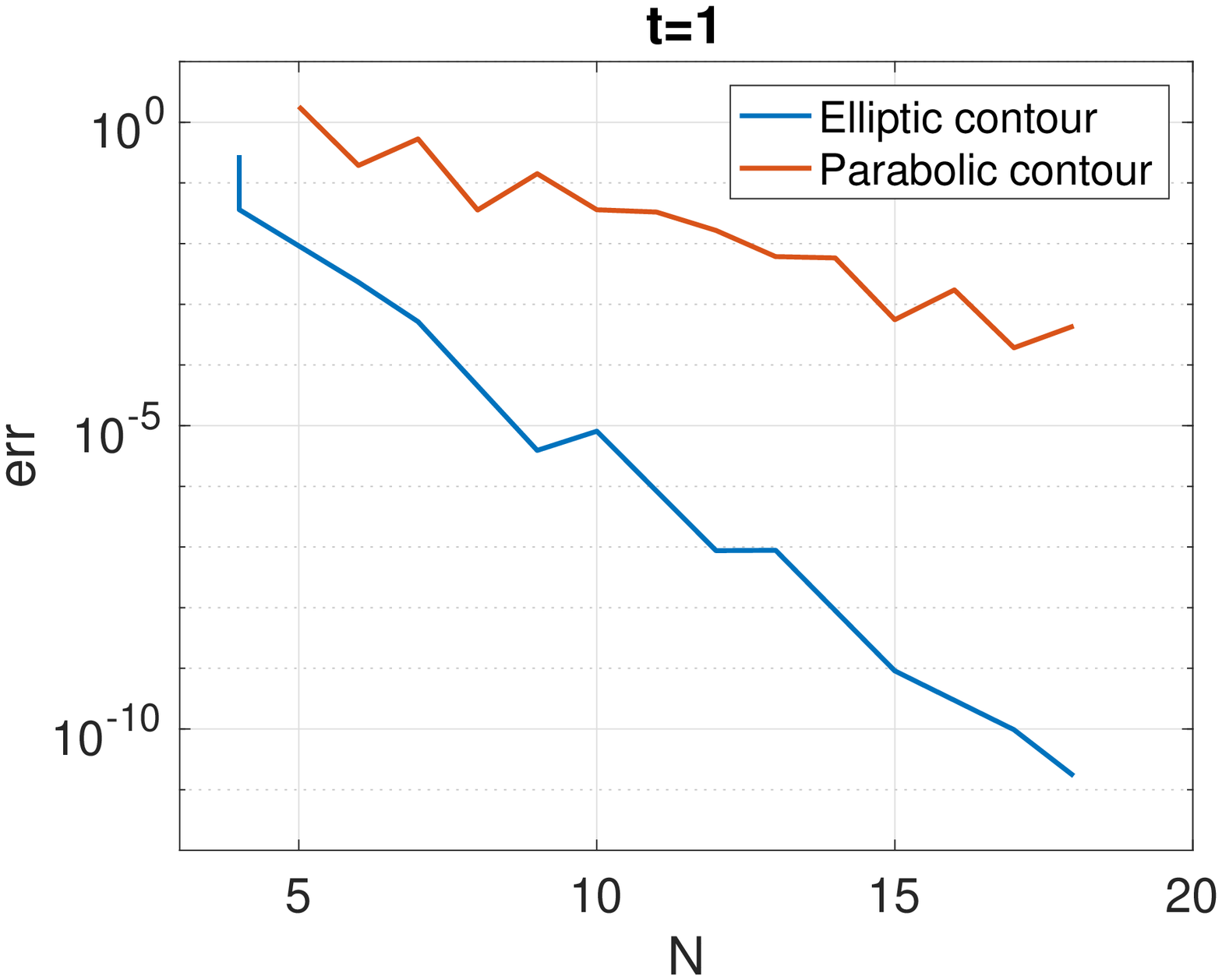}
\end{center}
\caption{Black-Scholes equation, comparison between \cite{LPS} and our method for $t=1$ $z_l=-40$, $z_r=0.05$. \textit{Left:} $\Lambda=1$. \textit{Right: }$\Lambda=1.5$.}\label{compBSLP1}
\end{figure}
\begin{figure}[h!]
\begin{center}
\includegraphics[scale=0.35]{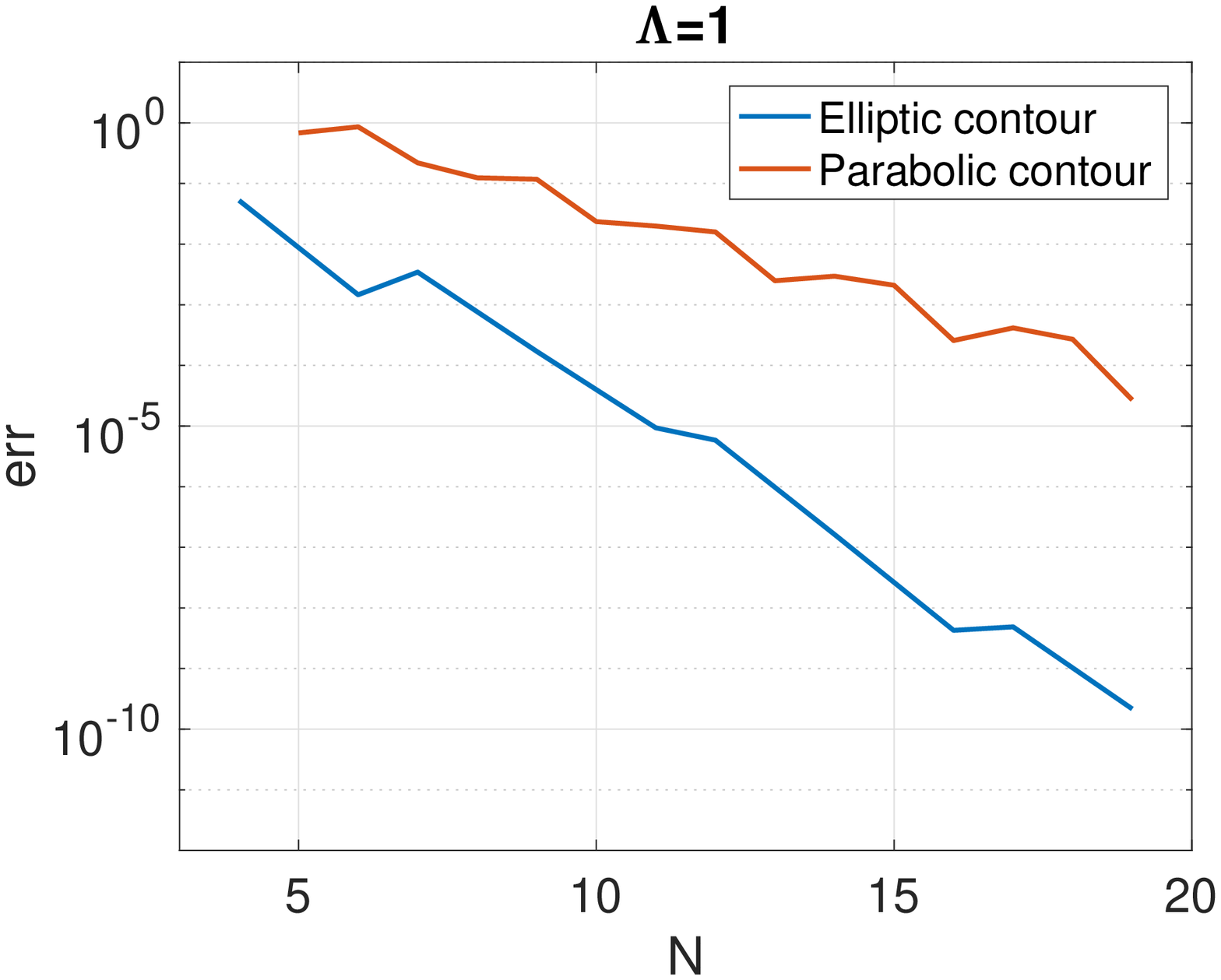}\includegraphics[scale=0.35]{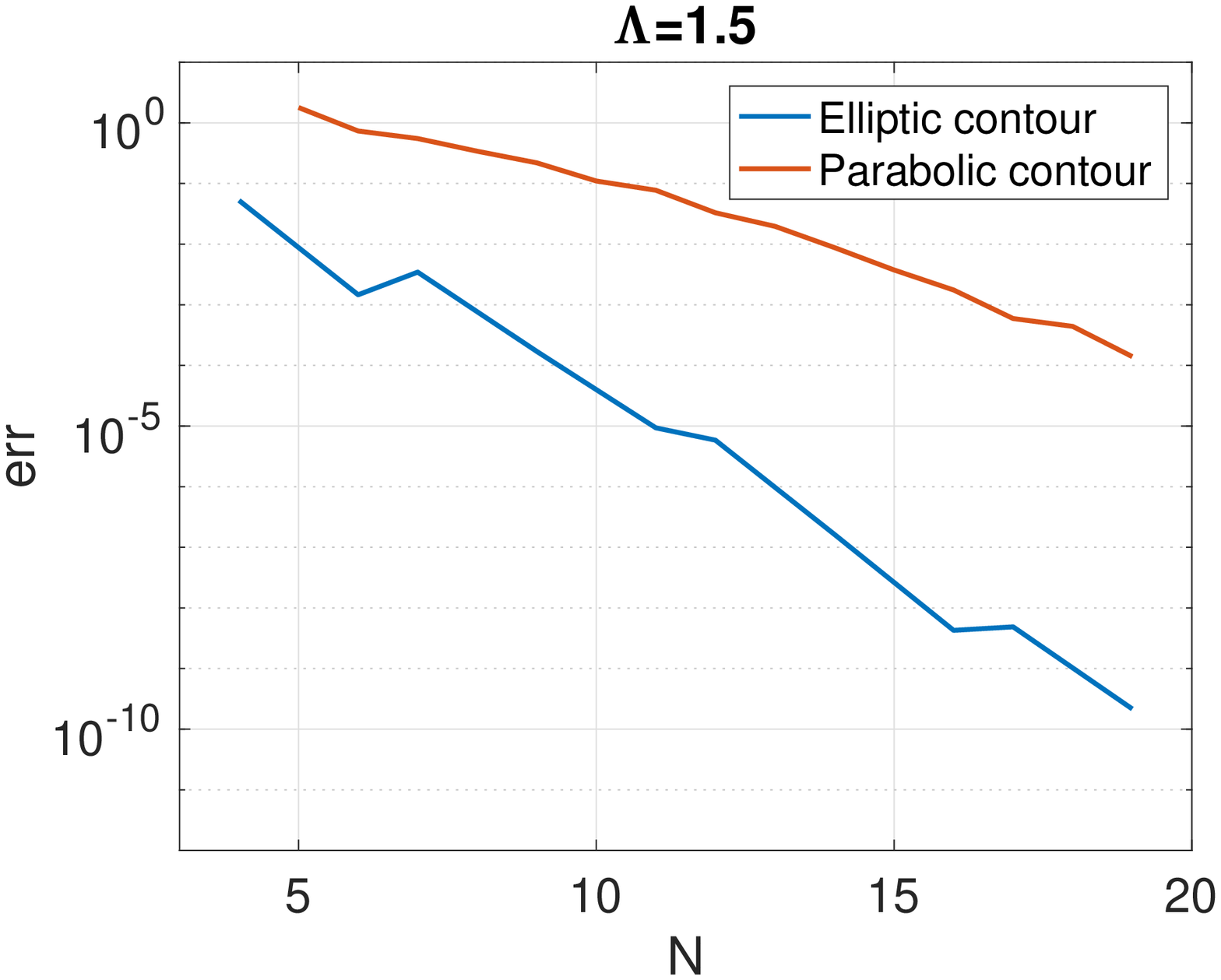}
\end{center}
\caption{Black-Scholes equation, comparison between \cite{LPS} and our method for $t=10$ $z_l=-4$, $z_r=0.01$. \textit{Left:} $\Lambda=1$. \textit{Right: }$\Lambda=1.5$.}\label{compBSLP10}
\end{figure}
\begin{figure}[h!]
\begin{center}
\includegraphics[scale=0.35]{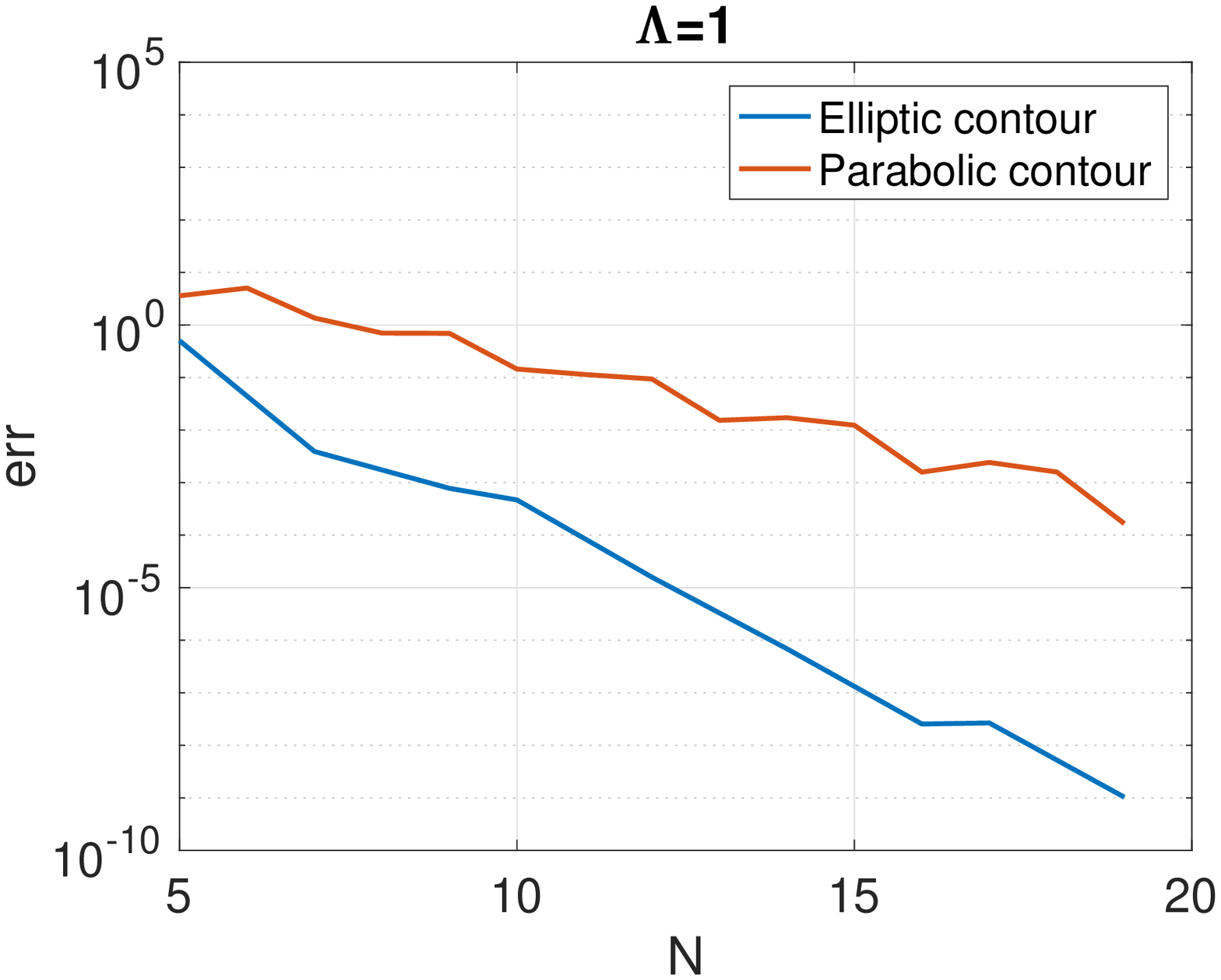}\includegraphics[scale=0.35]{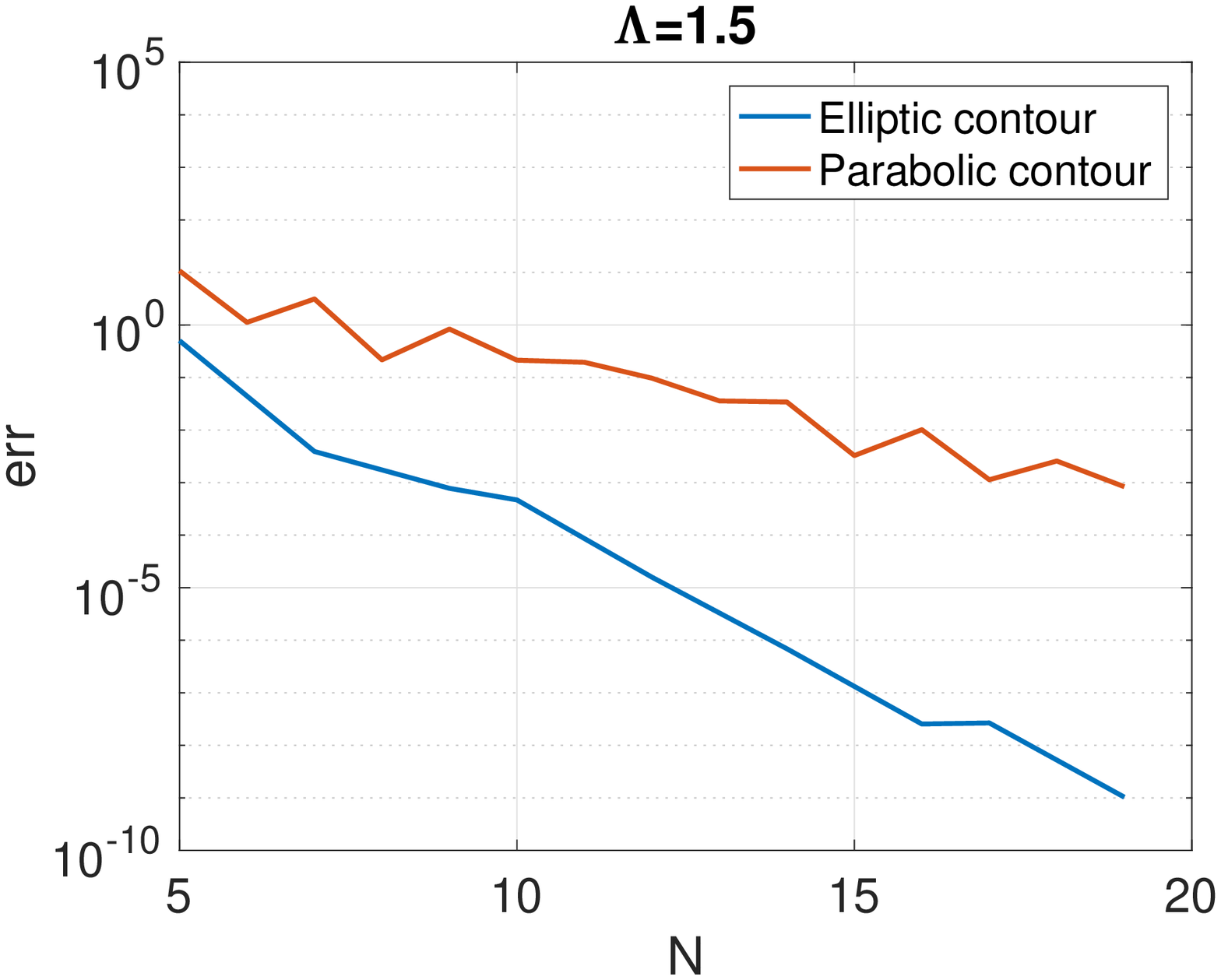}
\end{center}
\caption{Heston equation, comparison between \cite{LPS} and our method for $t=1$ $z_l=-40$, $z_r=0.09$. \textit{Left:} $\Lambda=1$. \textit{Right: }$\Lambda=1.5$.}\label{compHLP1}
\end{figure}
\begin{figure}[h!]\begin{center}
\includegraphics[scale=0.35]{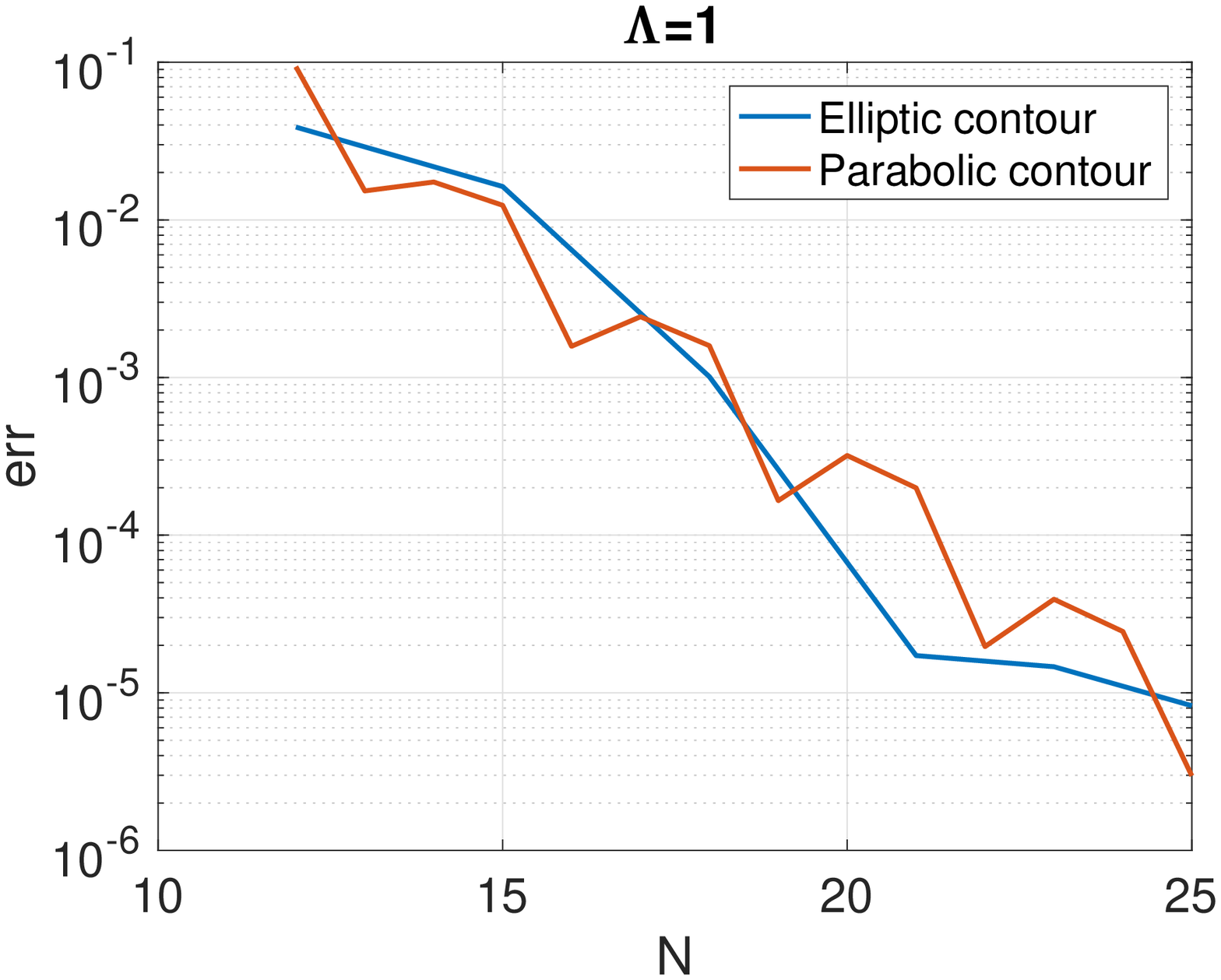}\includegraphics[scale=0.35]{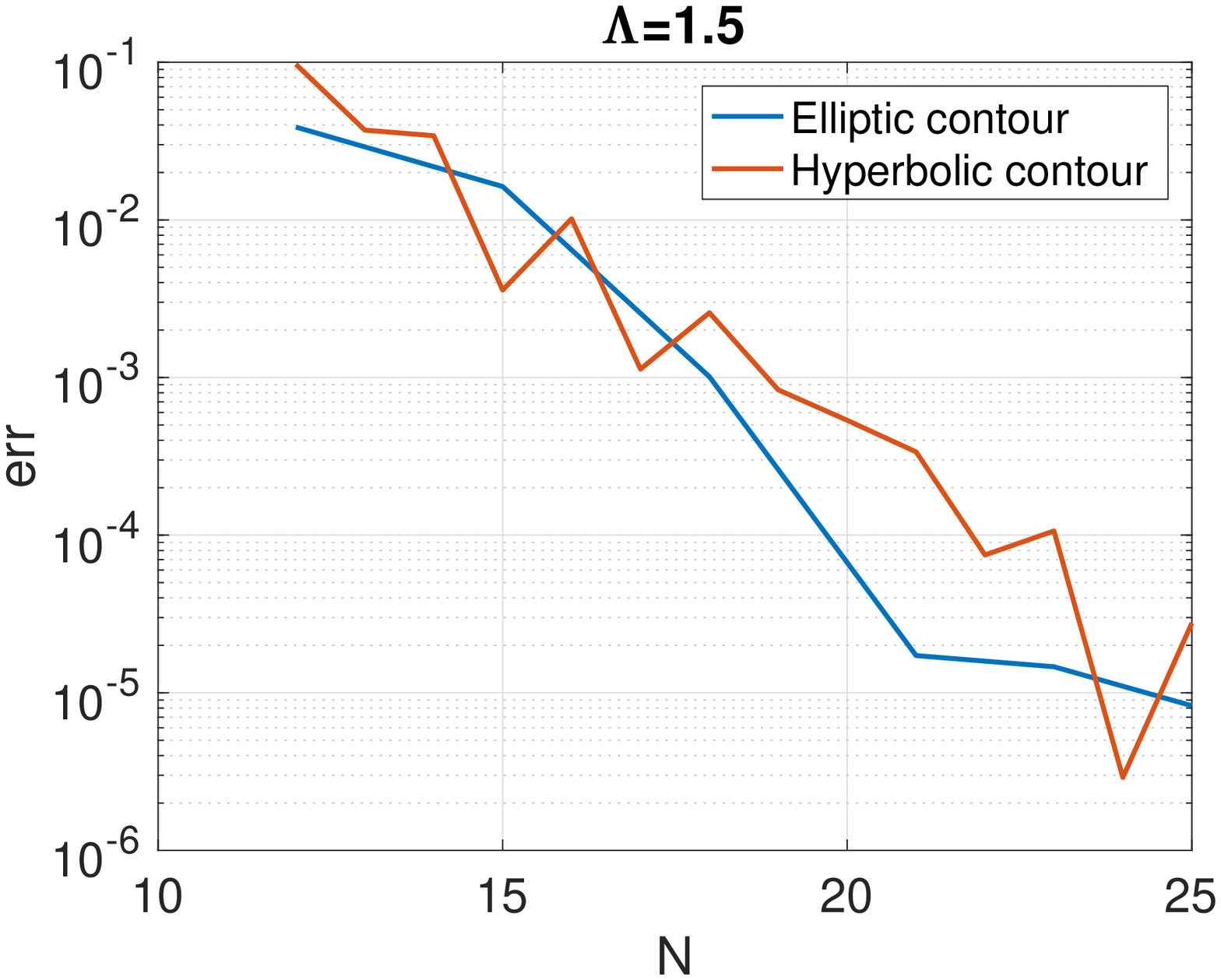}
\end{center}
\caption{Heston equation, comparison between \cite{LPS} and our method for $t=10$ $z_l=-4$, $z_r=0.06$. \textit{Left:} $\Lambda=1$. \textit{Right: }$\Lambda=1.5$.}\label{compHLP10}
\end{figure}


\section{Extension to the case of time intervals}\label{timInt}
We notice that the most expensive computation when evaluating \eqref{integrandFunction} is the inversion of the matrix $zI-A$ at the quadrature nodes. This inversion does not involve the time $t$ that appears only in the exponential part. For this reason, a great improvement to the efficiency of the method comes from the possibility of using a unique integration contour for a whole time interval $[t_0,t_1]$. In this Section we suggest a strategy for computing a unique profile of integration, uniquely defined by the parameter $a$ by \eqref{gammaActual}, \eqref{a1}, \eqref{a2}, \eqref{C}. By doing that, for a general time $t\in[t_0,t_1]$, we just need to compute the corresponding truncation parameter $c$ and the constant $K$ in \eqref{firstEquation}, \eqref{secondEquation}.
We would like to get an uniform error estimate like \eqref{estimateErr2} for the whole interval $[t_0,t_1]$. Recalling \eqref{errOpt} and the fact that $c\leq\frac{1}{2}$, we estimate
\begin{equation}\label{estIntervals}
\sup_{t\in[t_0,t_1]}\left\|I_N-I\right\|\lesssim \pi e^{D(a)t_1}e^{-2aN}\,.
\end{equation}
 Using estimate \eqref{estIntervals}, we recover the (theoretical) value of quadrature nodes sufficient to reach a prescribed precision $tol$. In particular, we get
\begin{equation}\label{nInt}
N=\frac{1}{2a}\left(D(a)t_1-\log\left(\frac{tol}{\pi }\right)\right)\,.
\end{equation}Fix $t=t_0$. We construct $\Gamma_+$ as explained in Section \ref{constrEll} for the time $t_0$ (lower time of the interval). The choice of the smaller time $t_0$ reflects in the setting of the center of the integration ellipse $z_l$ as explained in (i) of page \pageref{choicez}. In this way, we expect the contribution of the two half-lines in \eqref{Gamma} to be negligible for all the times $t\in[t_0,t_1]$. Application of the construction of Section \ref{constrEll} gives the parameters $z_l,z_r,d+\opi r$ defining uniquely $\Gamma_+$. At this point, we minimize the function
\begin{equation}\label{fToMIn2}
	f(a)=\frac{1}{2a}\left(D(a)t_1-\log\left(\frac{tol}{\pi}\right)\right)
\end{equation}where $D(a)$ is given by \eqref{D}. We end up with the optimal $a$ defining uniquely the profile of integration by \eqref{gammaActual}, \eqref{a1}, \eqref{a2}, \eqref{C}. We will use this profile for every time $t\in[t_0,t_1]$.\\
Even if the profile of integration is the same for all times, when $t$ changes we need to truncate it in a different point. For a general $t\in[t_0,t_1]$ we can compute the corresponding  values $c_t,K_t$ using Algorithm \ref{algK} of Section \ref{selConst}. In case we need to evaluate our integral for many times $t$, Algorithm \ref{algK} can be too expensive. Indeed, every iteration of this method requires the evaluation of the resolvent function. To save computational cost, we do as follows:
\begin{itemize}
\item we compute the pairs $(c_0,K_0)$ and $(c_1,K_1)$ corresponding to the times $t_0,t_1$;\\
\item calling $K_t$ the constant \eqref{secondEquation} for the general time $t\in(t_0,t_1)$ we make the assumption that $K_t$ is linear, i.e. we assume that
\begin{equation}\label{KInter}
K_t=K_0+(K_1-K_0)\frac{t-t_0}{t_1-t_0}\,.
\end{equation}The corresponding value of $c_t$ is given by \eqref{c}. This assumption turns out to be effective in the numerical experiments.
\end{itemize}

We show some numerical experiments for both Black-Scholes and Heston equation. Since the case of very large times is not really interesting (because the solution rapidly becomes stationary), we consider the case of intervals of the form $[t_0,\Lambda t_0]$ with $\Lambda=10$. In particular, we make the experiments on the intervals $[0.1,1]$, $[1,10]$. In this way, we approximate the solution on the whole time interval $[0.1,10]$ and the computation is competitive with respect to the classical PDEs integrators. In the plots \ref{intervalBS}, \ref{intervalH}, we show the numerical results for Black-Scholes and Heston equations. The target tolerance we choose is $tol=5\cdot 10^{-8}$ for Black--Scholes and $tol=5\cdot 10^{-4}$ for Heston. We also fix $z_r=0.01$ for Black--Scholes and $z_r=0.06$ for Heston.
\begin{figure}[h!]
\includegraphics[scale=0.4]{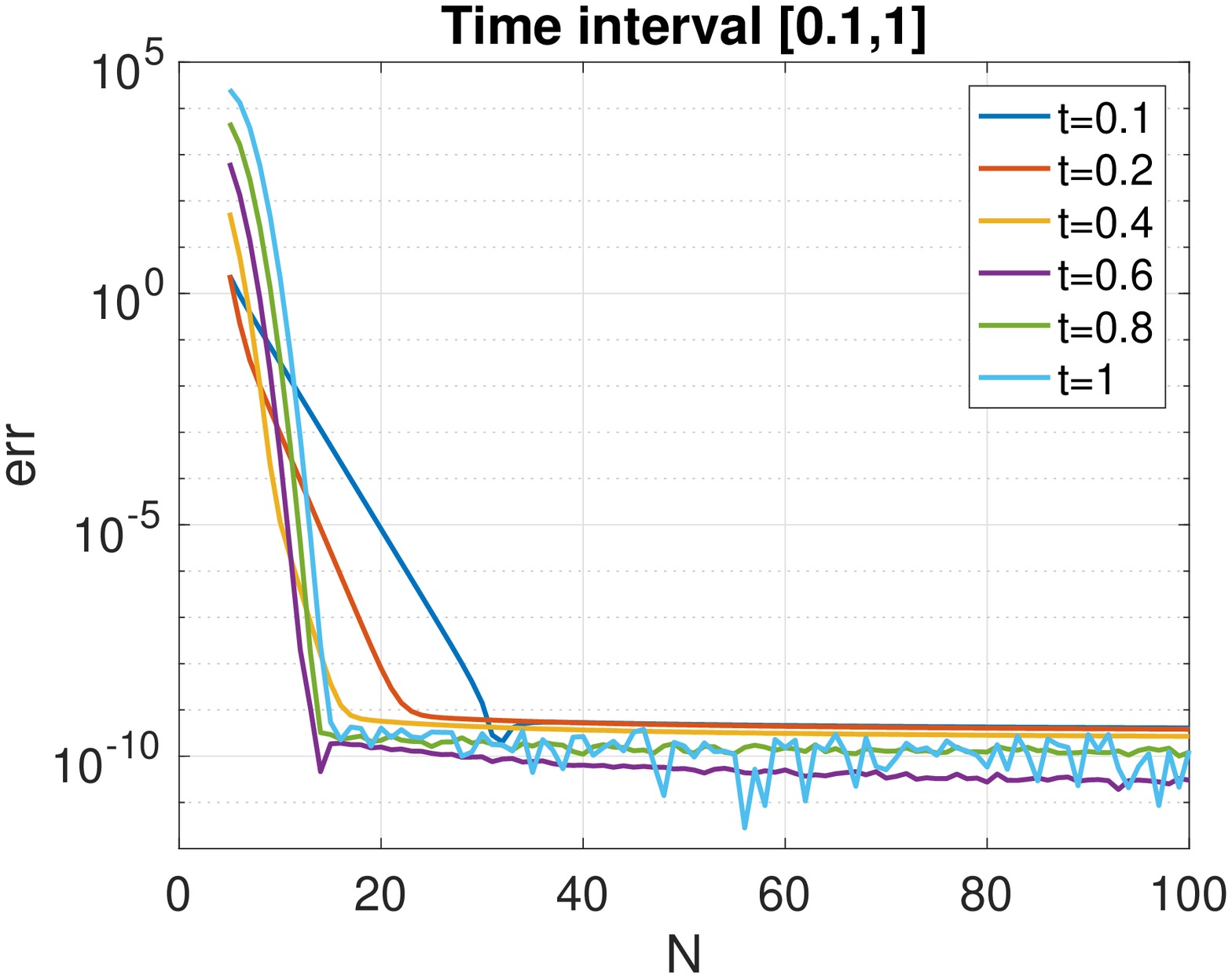}\includegraphics[scale=0.4]{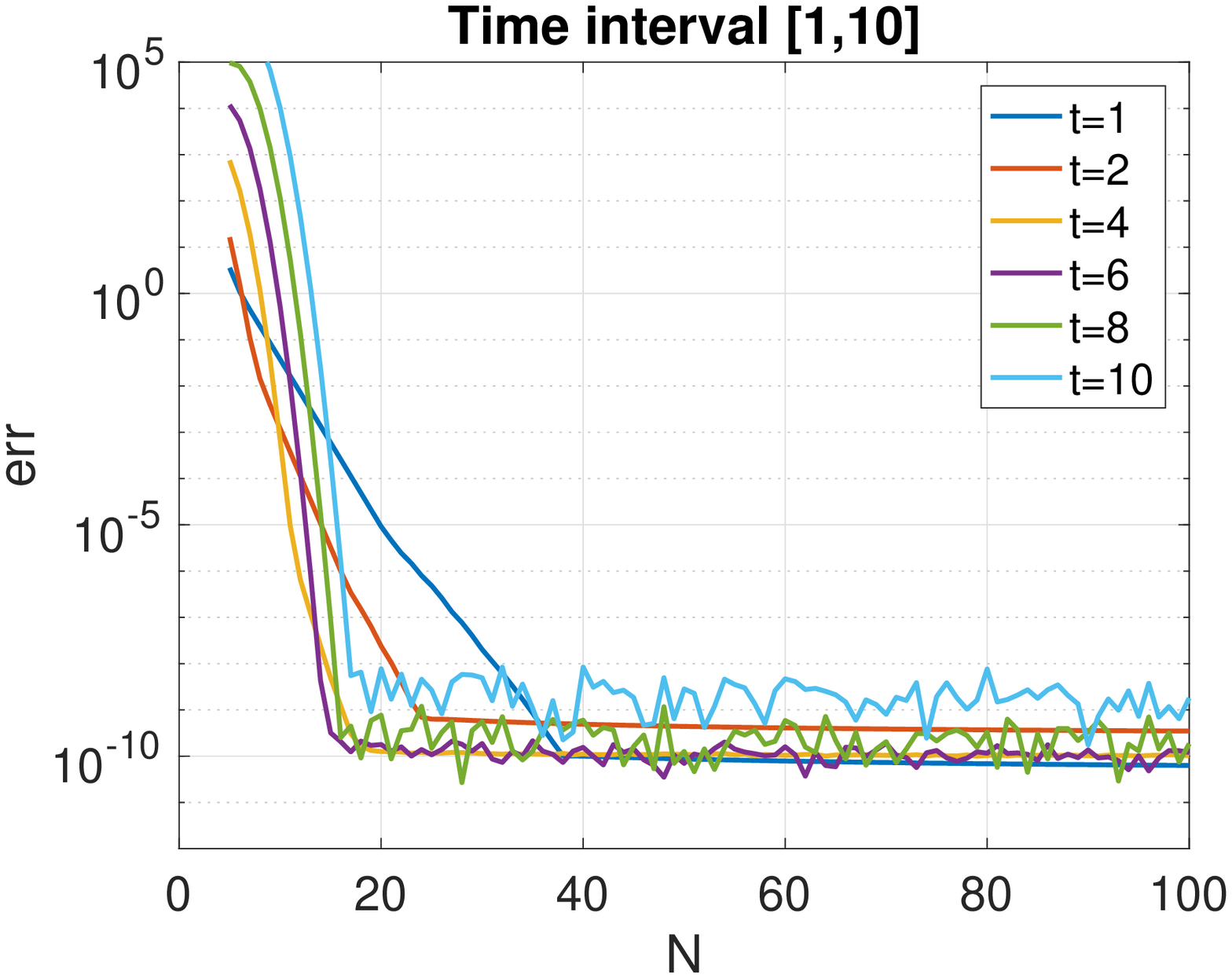}
\caption{Black-Scholes equation, $tol=10^{-8}$. \textit{Left:} time interval $[0.1,1]$, $z_l=-400$, $z_r=0.01$. \textit{Right: }time interval $[1,10]$, $z_l=-40$, $z_r=0.01$.}\label{intervalBS}
\end{figure}
\begin{figure}[h!]
\includegraphics[scale=0.4]{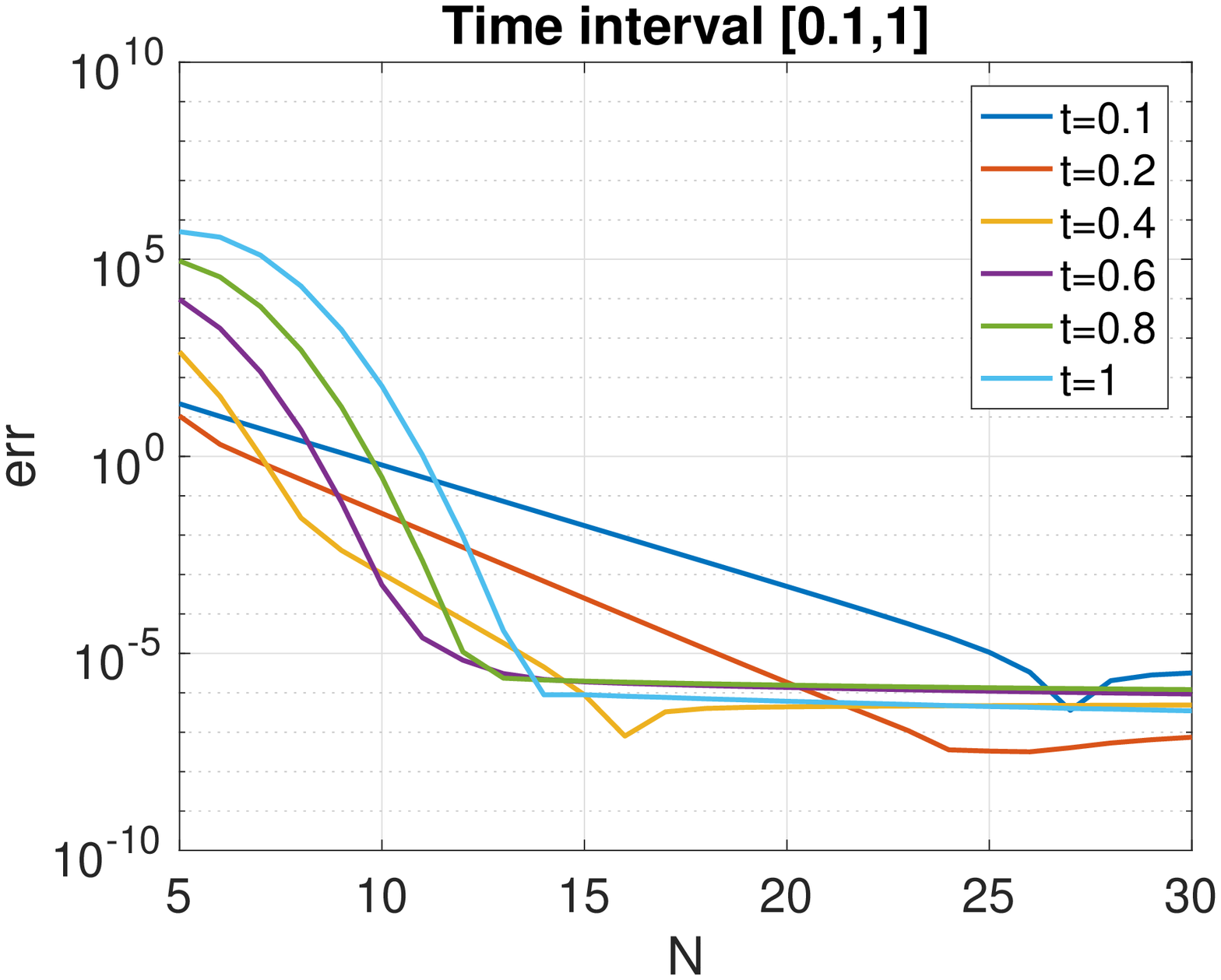}\includegraphics[scale=0.4]{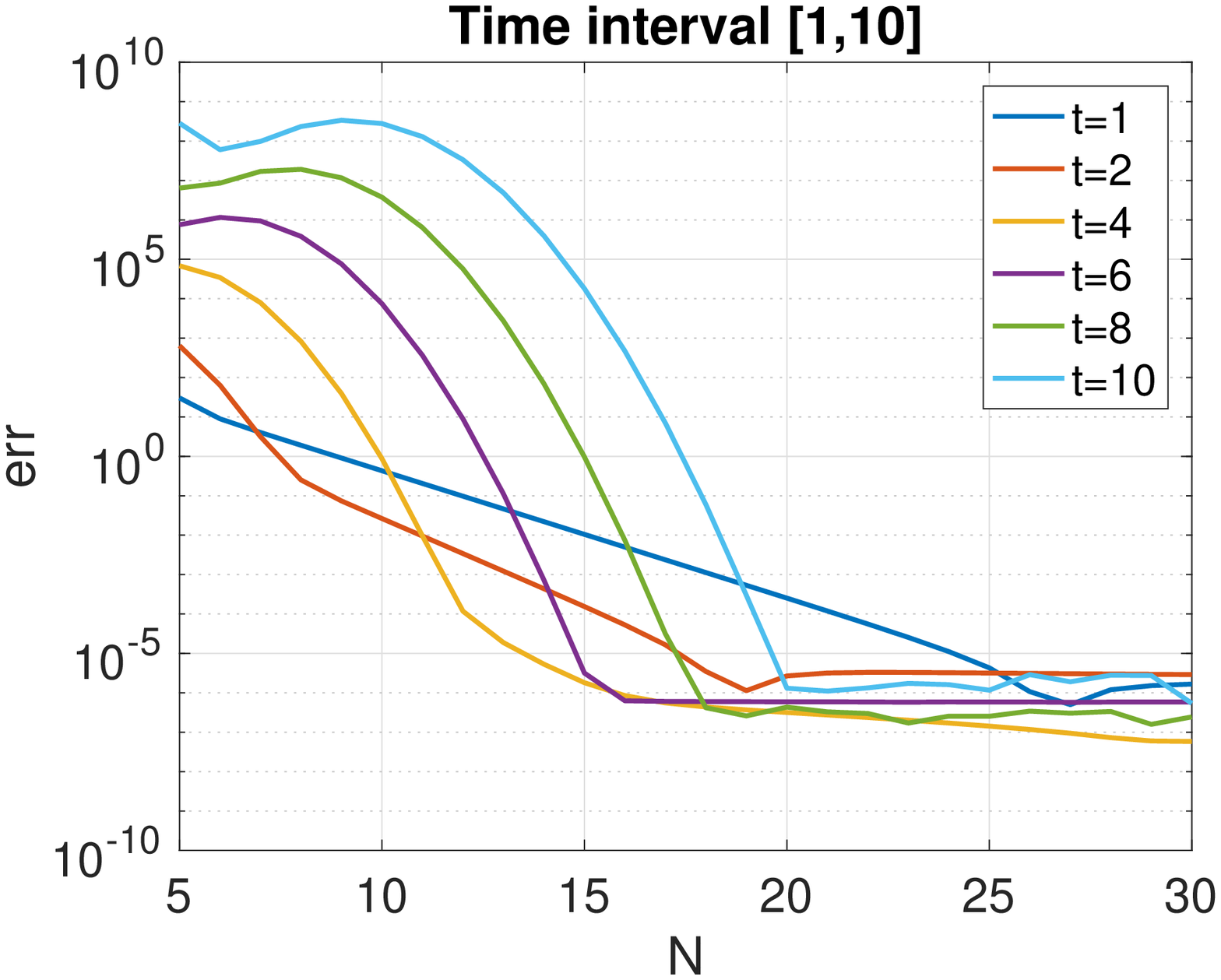}
\caption{Heston equation, $tol=10^{-4}$. \textit{Left:} time interval $[0.1,1]$, $z_l=-400$, $z_r=0.06$. \textit{Right: }time interval $[1,10]$, $z_l=-40$, $z_r=0.06$.}\label{intervalH}
\end{figure}
A slowdown of the convergence rate as $t$ decreases is observed. This is due to the fact that $c$ is decreasing w.r.t. time and the rate of convergence is ${\cal O}(e^{-\frac{a}{c}N})$. It is interesting to compare those performances with the one obtained by \cite{LPS} since this method is also conceived to work on time intervals. In Figures \ref{intervalBSLP}, \ref{intervalHLP} the results are plotted (in both cases we take $\alpha=0.4\,,d=0.4$)
\begin{figure}[h!]
\includegraphics[scale=0.27]{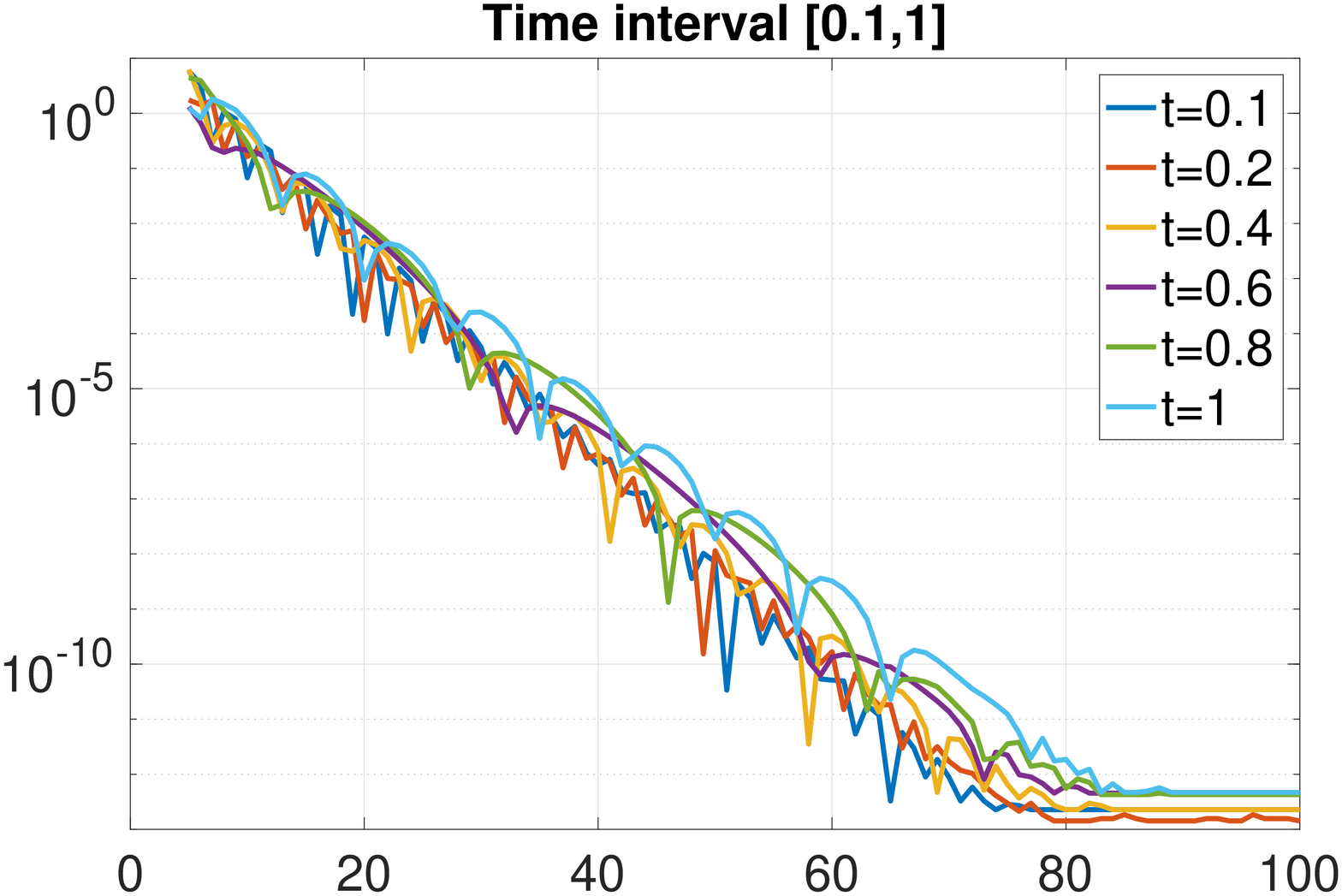}\includegraphics[scale=0.27]{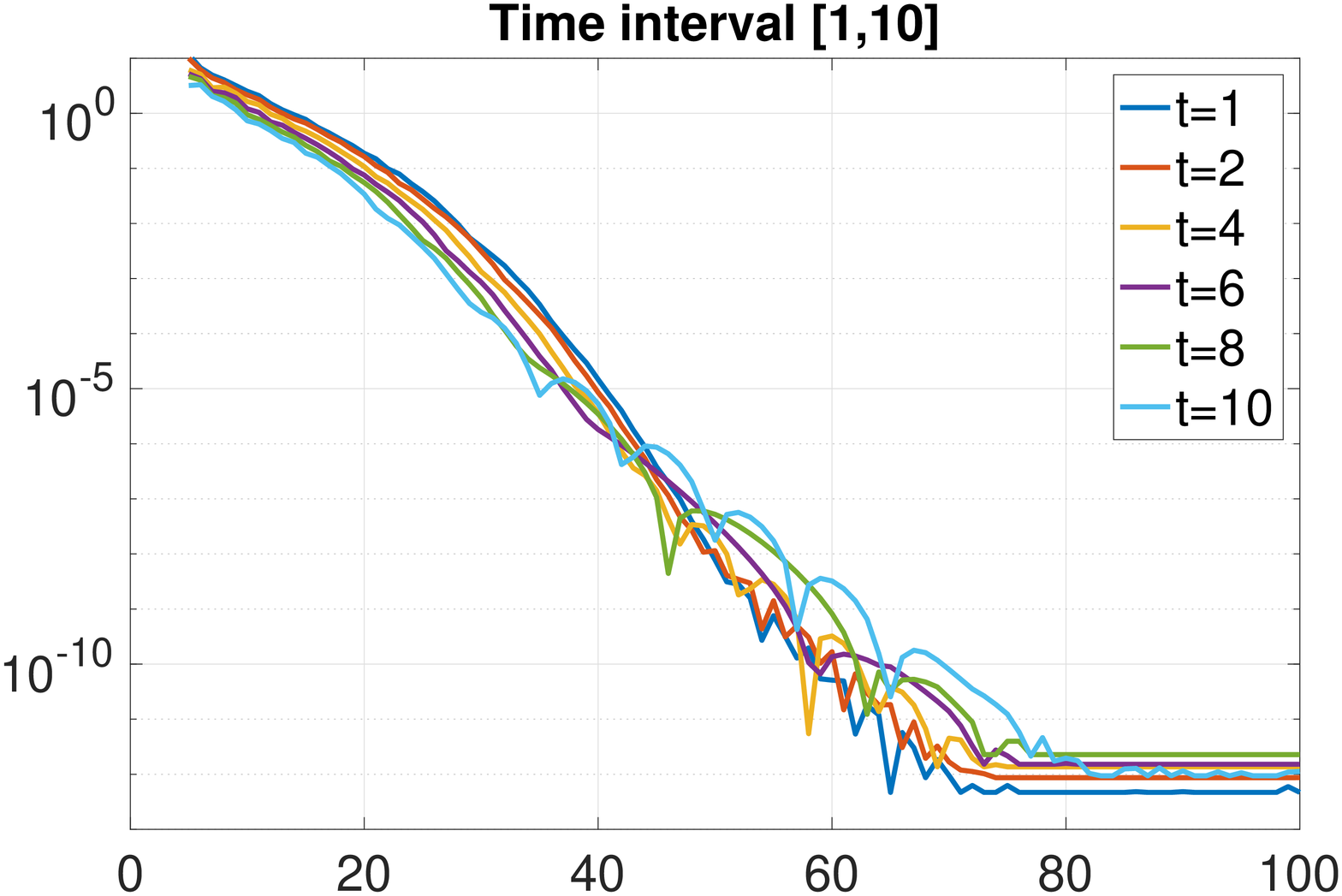}
\caption{Black-Scholes equation using \cite{LPS}. \textit{Left: }time interval $[0.1,1]$. \textit{Right: }time interval $[1,10]$.}\label{intervalBSLP}
\end{figure}
\begin{figure}[h!]
\includegraphics[scale=0.27]{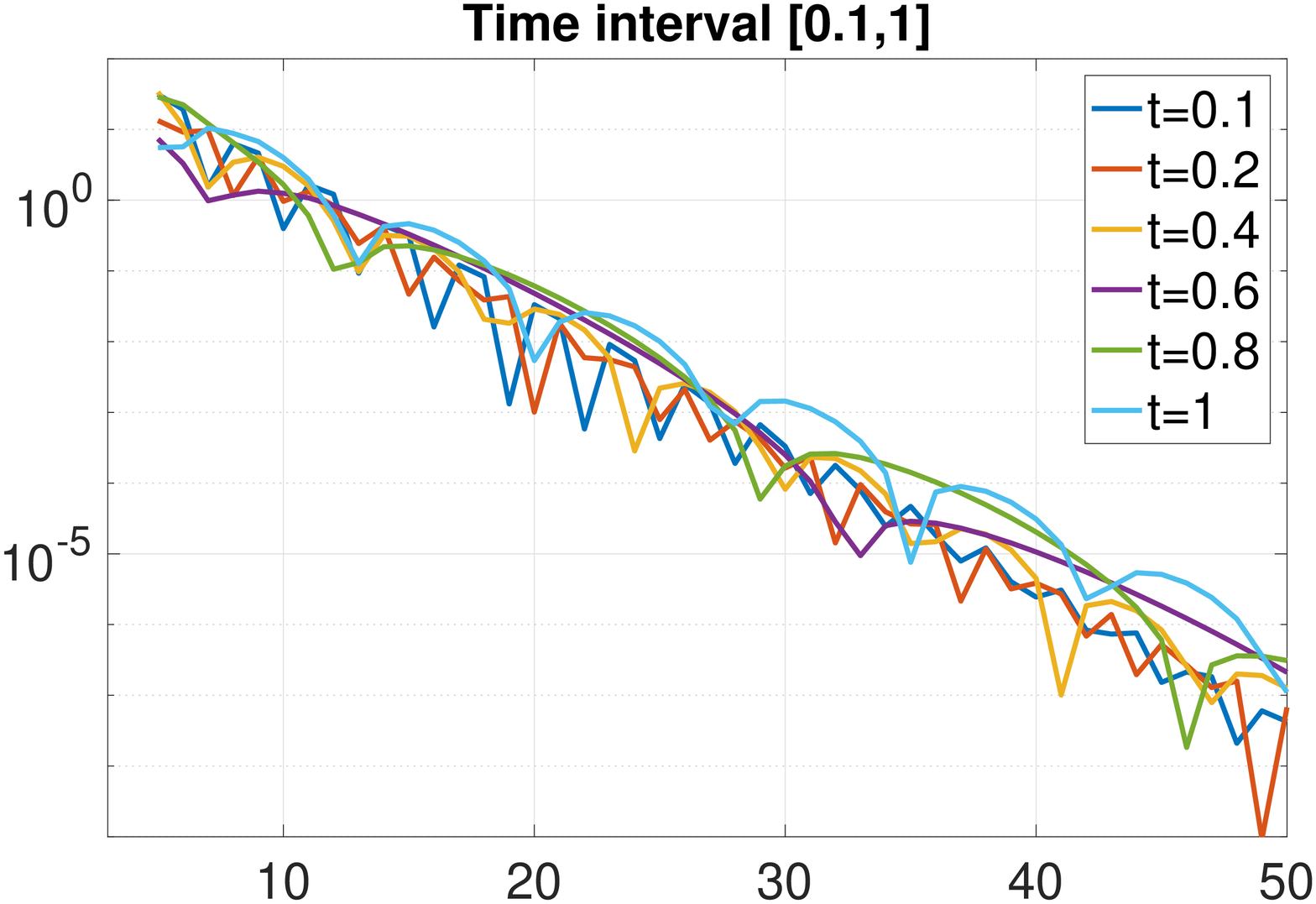}\includegraphics[scale=0.27]{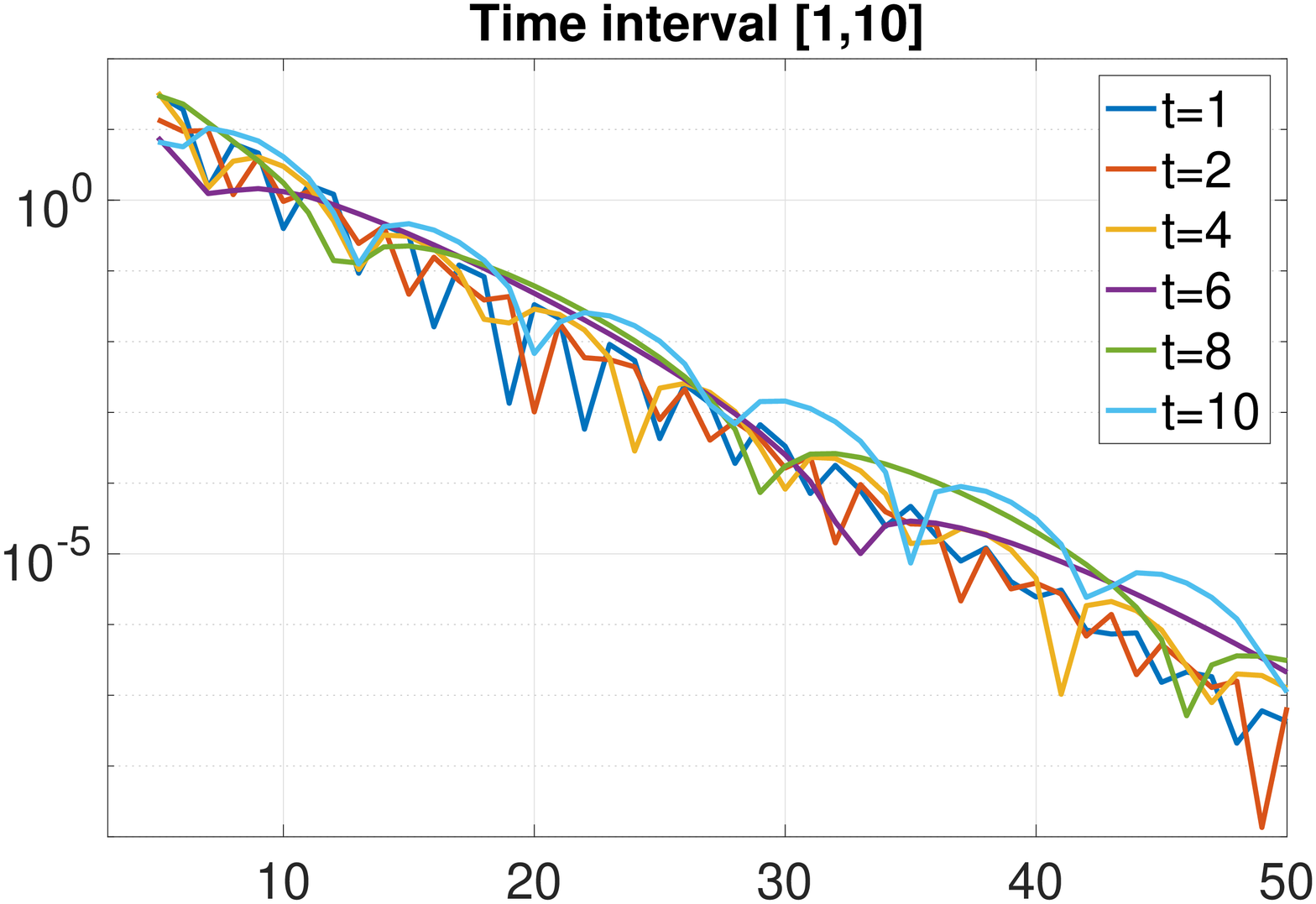}
\caption{Heston equation using \cite{LPS}. \textit{Left: }time interval $[0.1,1]$. \textit{Right: }time interval $[1,10]$.}\label{intervalHLP}
\end{figure}
\section{Conclusions}
In this paper we have proposed a new method for the numerical inversion of the Laplace transform of functions with specific properties arising in the space-time approximation of linear convection-diffusion equations.

Our method is based on a preliminary investigation of some pseudospectral level sets of $A$. In this way, the method can be directly applied to any linear system of ODEs with constant coefficient matrix and no other \textit{a priori} information about the discrete operator $A$ is needed. This first step is not considered sistematically in \cite{ITHWeid,LP,LPS,W}. In our applications, the computation of the pseudospectral level curves is performed by \texttt{eigtool}. The computational cost of the approximation made by using \texttt{eigtool} is reported in Subsection \ref{compCost}, where we also show that a low resolution in this approximation might be enough to construct a good integration contour.

We recap the main advantages of our method:
\begin{itemize}


\item[(i) ] It is designed in order to achieve a prescribed precision as fast as possible.

\item[(ii) ] It is stable: adding quadrature nodes never deteriorates the quality of the approximation. As shown in \cite{LPS} and \cite{W}, this can be a delicate issue in the numerical inversion of the Laplace transform.

The stability constant of the method can be computed to carry out an a priori feasibility check to detect if the prescribed accuracy is too high.

\item[(iii) ] It is easily adapted to approximate the solution to \eqref{mainpb} on relatively large time intervals of the form $[t_0,\Lambda t_0]$, with $\Lambda >1$.

\item[(iv) ] Once the target accuracy $tol$ and the time $t$ are fixed, our algorithm selects the profile of integration independently of the number of quadrature nodes. Thus, the cost of adding quadrature nodes to reach the target accuracy is low in comparison to the algorithms in \cite{ITHWeid,LPS}, where the integration contour does depend on the number of quadrature nodes.

\end{itemize}

Future research will be devoted to reduce the dependance on {\tt eigtool}, which can be prohibitively expensive for large matrices arising from the spatial discretization of 2D and, specially, 3D convection-diffusion equations. The resolution of the linear systems with non-normal matrices $zI-A$ should also be more carefully studied.

\section*{Acknowledgments}

The authors thank K. J. in 't Hout for providing the codes implementing the method in \cite{ITHF}.
NG and MLF thank INdAM GNCS for financial support. MLF also acknowledges the support of the Spanish grant MTM2016-75465 and the Ram\'on y Cajal program of the Ministerio de Economia y Competitividad, Spain.

\end{document}